\pdfoutput=1

\documentclass[a4paper,reqno]{article}

\usepackage[utf8]{inputenc} 
\usepackage[T1]{fontenc} 
\usepackage{lmodern} 
\usepackage[american]{babel}
\usepackage{csquotes}

\usepackage{verbatim} 
\usepackage{authblk} 
\usepackage{calc}
\usepackage{amsmath}
\usepackage{amsfonts}
\usepackage{graphicx}
\usepackage{float}
\usepackage{xcolor}
\usepackage{enumitem}
\setlist[enumerate]{leftmargin=.5in} 
\setlist[itemize]{leftmargin=.5in} 
\providecommand{\keywords}[1]{\textbf{Keywords: } #1} 

\def\natural{\mathbb{N}}
\def\integer{\mathbb{Z}}
\def\real{\mathbb{R}}

\newcommand{\qspace}[2]{\raisebox{0.7ex}{\ensuremath{#1}}\ensuremath{\mkern-3mu}\Big/\ensuremath{\mkern-3mu}\raisebox{-0.7ex}{\ensuremath{#2}}} 
\let\originalleft\left
\renewcommand{\left}{\mathopen{}\mathclose\bgroup\originalleft} 
\let\originalright\right\renewcommand{\right}{\aftergroup\egroup\originalright} 

\usepackage{amsthm}
\usepackage{amssymb}
\newtheorem*{theorem*}{Theorem} 
\newtheorem{theorem}{Theorem}[section] 
\newtheorem{lemma}[theorem]{Lemma} 
\newtheorem{proposition}[theorem]{Proposition}
\newtheorem{definition}[theorem]{Definition}
\newtheorem{example}[theorem]{Example}
\newtheorem{corollary}[theorem]{Corollary}

\usepackage{tikz}
\usetikzlibrary{arrows}
\usepackage{environ} 
\makeatletter
\newsavebox{\measure@tikzpicture}
\NewEnviron{scaletikzpicturetowidth}[1]{%
	\def\tikz@width{#1}%
	\begin{lrbox}{\measure@tikzpicture}%
		\BODY
	\end{lrbox}%
	\pgfmathparse{#1/\wd\measure@tikzpicture}%
	\BODY
}
\makeatother

\usepackage[
pdfmenubar=true,
pdftitle={Convergence Proof of Ginelli Algorithm},
pdfauthor={Florian Noethen},
colorlinks=true,
linkcolor=black,
citecolor=blue,
filecolor=cyan,
urlcolor=magenta,
raiselinks=false,
hyperfootnotes=true
]{hyperref}
\usepackage[capitalize,nameinlink]{cleveref} 


\begin{document}
	
	\title{A projector-based convergence proof of the Ginelli algorithm for covariant Lyapunov vectors}
\author{Florian Noethen
	\thanks{Fachbereich Mathematik, Universität Hamburg, Bundesstraße 55, 20146 Hamburg, Germany (\href{mailto:florian.noethen@uni-hamburg.de}{florian.noethen@uni-hamburg.de}).}
}
\date{February 27, 2019} 
\maketitle

\begin{abstract}
	Linear perturbations of solutions of dynamical systems exhibit different asymptotic growth rates, which are naturally characterized by so-called covariant Lyapunov vectors (CLVs). Due to an increased interest of CLVs in applications, several algorithms were developed to compute them. The Ginelli algorithm is among the most commonly used. Although several properties of the algorithm have been analyzed, there exists no mathematically rigorous convergence proof yet.\par
	In this article we extend existing approaches in order to construct a projector-based convergence proof of Ginelli's algorithm. One of the main ingredients will be an asymptotic characterization of CLVs via the Multiplicative Ergodic Theorem. In the proof, we keep a rather general setting allowing even for degenerate Lyapunov spectra.
\end{abstract}

\keywords{Ginelli Algorithm; Convergence Proof; Covariant Lyapunov Vectors; Lyapunov Exponents}

\hspace{1em}



\tableofcontents

\hypersetup{linkcolor=red}
	
	\section{Introduction}\label{sectionIntroduction}
	In this paper, we provide a projector-based convergence proof of the Ginelli algorithm \cite{Ginelli2007}, which computes the so-called CLVs. They form an intrinsic basis of the tangent space along a given reference trajectory and, thus, describe its local model. In fact, CLVs can be seen as a generalization to eigenvectors describing the local model of a steady-state.\par
	By the Hartman-Grobman theorem the local model of a steady-state is linked with the original system. Eigenspaces correspond to invariant manifolds of the flow and eigenvalues indicate exponential growth/decay rates of perturbations of the equilibrium. Similar relations can be established for periodic orbits via Floquet theory.\par
	In 1968 Oseledets managed to find a suitable generalization that goes beyond the analysis of steady-states and periodic orbits. In his celebrated \textit{Multiplicative Ergodic Theorem} (MET) \cite{Oseledets1968} the long-term behavior of linear perturbations of arbitrary trajectories is explained. Similar to the case of steady-states, the tangent space is split into invariant subspaces that capture directions of different asymptotic growth rates. Instead of a splitting into eigenspaces, we obtain the \textit{Oseledets splitting} with its corresponding \textit{Lyapunov spectrum} consisting of \textit{Lyapunov exponents} (LEs) as opposed to eigenvalues. In the nondegenerate case, i.e. if the Lyapunov spectrum is simple, the Oseledets spaces are one-dimensional and, hence, can be identified with a basis of vectors for each point of the trajectory. Those vectors are called \textit{covariant Lyapunov vectors} (CLVs).\par
	Despite their prominent role, it was not until a few years ago that first algorithms to compute CLVs were developed. Following Ginelli's algorithm \cite{Ginelli2007} in 2007 several other approaches emerged \cite{Wolfe2007, Froyland2013, Kuptsov2012}, some of which are explained only for nondegenerate scenarios. With computational tools like Ginelli's algorithm at hand, CLVs became a frequent interest in applications. Amongst others, CLVs reveal structures in turbulent flows \cite{Conti2017,Inubushi2015} and are used to analyze hard-disk systems \cite{Bosetti2010,Bosetti2013,Morriss2013,Truant2014} and climate models \cite{Schubert2015, Schubert2016, Vannitsem2016}. Moreover, they constitute a hyperbolic decoupling of the tangent space of dissipative systems that extracts the physically relevant modes \cite{Takeuchi2011}. Furthermore, the angle between CLVs is used as an indicator for critical transitions in long-term behavior of solutions \cite{Beims2016,Sharafi2017} and as a degree of hyperbolicity \cite{Conti2017,Saiki2010,Xu2016,Yang2009} in dynamical systems. However, despite the existence of numerous applications, many theoretical aspects of CLV-algorithms are still unexplored. This paper is a step to reducing the gap between theory and applications. Our goal is to verify convergence of Ginelli's algorithm by correcting and extending previous results.\par
	In 1998 Ershov and Potapov investigated what could be called the first phase of Ginelli's algorithm, where past states of a reference point are explored to compute the fastest growing directions \cite{Ershov1998}. 15 years later Ginelli et al. built upon the work of Ershov and Potapov to formulate a convergence proof of their full algorithm \cite{Ginelli2013}. They focused on a second phase, where future states are probed to obtain the fastest decaying directions. By a certain relation between both phases it is possible to extract the CLVs.\par
	While \cite{Ershov1998} and \cite{Ginelli2013} present fundamental ideas on convergence of Ginelli's algorithm, we find it necessary to be more precise in some arguments. In particular, \cite{Ershov1998} shows that almost all initial vectors propagated from present to future will align with left singular vectors of the propagator asymptotically. Then, an estimate for propagation from past to present is obtained by shifting the estimate for propagation from present to future. This argument requires a more detailed analysis of the $O(1)$-terms as the condition on exceptional vectors that will not yield convergence depends on the starting point, which is neglected in \cite{Ershov1998}. In particular, the set of admissible initial vectors can be different for each starting point that is associated with a chosen runtime. Additionally, we find that both phases of Ginelli's algorithm should be treated as connected. Whereas, until now perfect convergence of the first phase was assumed to simplify the analysis of the second phase. However, despite the criticism, both papers are significant steps to better understand the connection between Oseledets MET and the Ginelli algorithm. In fact, they inspire many ideas presented here.\par
	Our new convergence proof fills missing details and even extends the existing results. Unlike in a nondegenerate scenario, we do not pose any restrictions on the Lyapunov spectrum. Arbitrary dimensions for Oseledets spaces are allowed. Moreover, we distinguish between a discrete and continuous time version of the algorithm. It turns out that both versions converge, however the precise notion of convergence is different. Namely, the discrete time version of Ginelli's algorithm converges for almost all configurations of initial vectors, whereas the continuous time version only converges in measure. Furthermore, by incorporating the Lyapunov index notation we find an estimate for the speed of  convergence. As already predicted and observed \cite{Ershov1998,Froyland2013,Ginelli2013,Vannitsem2016}, the speed of convergence is exponential with a rate determined by the minimal distance of LEs.\\
	
	The main part of our article is divided into three sections. \Cref{sectionTools} sets the notation and constructs tools needed for the convergence proof later on. A special interest lies in the evolution of vectors/subspaces in terms of distances and angles. In particular, the relation of propagated vectors to singular vectors is of importance, since singular vectors form directions of optimal growth rates for finite time. In \cref{sectionGinellisAlgorithm} we present Ginelli's algorithm and state a deterministic version of Oseledets MET. By having a fairly general setting, we try to include as many scenarios as possible, though, we assume finite dimensional dynamics. With all preparations finished, we are in a position to precisely formulate and prove convergence of Ginelli's algorithm. The main work of \cref{sectionConvergence} consists in assembling the tools obtained in \cref{sectionTools}, while the MET from \cref{sectionGinellisAlgorithm} serves as an interface between evolution of singular vectors and CLVs.
	\bigskip
	
		\section{Notation and Tools}\label{sectionTools}
	This section is primarily concerned with the evolution of vectors and subspaces. In order to keep track of the speed of convergence, we define the notion of a Lyapunov index. Next, we set up necessary notation to describe distances and angles of subspaces. In particular, we are interested in how those quantities change after applying a propagation map and after orthogonalization, e.g. the Gram-Schmidt procedure. An estimate of the rate of change is given based on a relation to singular vectors of the propagating linear map. As it turns out, there are configurations of vectors and subspaces that perform better than others. A distinction between them will be made by introducing a so-called admissibility parameter. Later on we will use the admissibility parameter to describe how well a configuration behaves in Ginelli's algorithm.

	\subsection{Lyapunov Index}\label{subsectionLyapunovIndex}
	When analyzing an algorithm, one of the main aspects to consider is the speed of convergence. It is defined as the rate of change of the distance between a current and a sough-after state as a parameter, such as time, is increased. In our case time can be either discrete ($\mathbb{T}=\integer$) or continuous ($\mathbb{T}=\real$). Moreover, the nature of the problem or features of the algorithm might already prescribe certain timescales. In fact, LEs and CLVs describe properties on an exponential time scale, which can be captured by the Lyapunov index notation.
	\begin{definition}\label[definition]{definitionLyapunovIndex}
		The \emph{Lyapunov index} $\lambda(f)\in \real\cup\{\pm\infty\}$ of a function $f:\mathbb{T}_{\geq 0}\to\real_{\geq 0}$ is defined as the limit
		\begin{equation*}
		\lambda(f):=\limsup_{t\to\infty}\frac{1}{t}\log f(t).
		\end{equation*}
	\end{definition}
	Roughly speaking, the function $f$ behaves similar to $e^{t\lambda(f)}$ on an exponential scale. For example, a negative Lyapunov index implies exponential decay. However, one should note that variations on smaller scales are not included in this notation\footnote{For example, $e^{-t}$ and $\sin(t)t^2e^{-t}$ have the same Lyapunov index.}, but very well may be of importance for limited time scenarios such as numerical computations.\par 
	Next, we list some useful properties for the Lyapunov index, which can be found in Arnold's book \cite{Arnold1998} and are easily verified:
	\begin{proposition}\label[proposition]{propositionLyapunovIndexProperties}
		Let $f,g:\mathbb{T}_{\geq 0}\to\real_{\geq 0}$. The following are true:
		\begin{enumerate}
			\item $\lambda(0)=-\infty$,
			\item $\lambda(c)=0$ for $c>0$ constant,
			\item $\lambda(\alpha f)=\lambda(f)$ for $\alpha>0$,
			\item $\lambda\left(f^\alpha\right)=\alpha\lambda(f)$ for $\alpha>0$,
			\item $f\leq g\implies\lambda(f)\leq\lambda(g)$,
			\item $\lambda(f+g)\leq \max(\lambda(f),\lambda(g))$,
			\item $\lambda(fg)\leq\lambda(f)+\lambda(g)$ (if the right-hand side makes sense).
		\end{enumerate}
	\end{proposition}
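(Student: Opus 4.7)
The plan is to verify each of the seven items by unfolding the definition of the Lyapunov index and invoking standard properties of $\limsup$ together with monotonicity/additivity of $\log$. Since the proposition already notes that these facts are "easily verified" and can be found in Arnold's book, the proof will be short and mostly mechanical; I will simply make sure to treat the extended real values $\pm\infty$ consistently.

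First I would handle the baseline items (i)--(iv). For (i), if $f\equiv 0$ then $\log f(t)=-\infty$ for every $t$, so the defining $\limsup$ is $-\infty$. For (ii), $\frac{1}{t}\log c\to 0$. For (iii), write $\frac{1}{t}\log(\alpha f(t))=\frac{1}{t}\log\alpha+\frac{1}{t}\log f(t)$ and note that $\frac{1}{t}\log\alpha\to 0$, which does not affect the $\limsup$. For (iv), use $\log f(t)^\alpha=\alpha\log f(t)$ and the identity $\limsup(\alpha x_t)=\alpha\limsup x_t$ valid for $\alpha>0$.

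Next I would dispatch the order and composition properties (v)--(vii). Item (v) follows from monotonicity of $\log$ and monotonicity of $\limsup$. For (vi), combine $f+g\leq 2\max(f,g)$ with items (v) and (iii) to get $\lambda(f+g)\leq\lambda(\max(f,g))=\max(\lambda(f),\lambda(g))$, where the final equality uses that $\max(f,g)$ equals $f$ or $g$ pointwise and that $\limsup$ of a pointwise maximum equals the maximum of the individual $\limsup$'s in this scalar setting (or, more directly, apply (v) in both directions with $\max(f,g)\leq f+g$ and $f,g\leq\max(f,g)$). For (vii), $\log(fg)=\log f+\log g$, so $\frac{1}{t}\log(f(t)g(t))=\frac{1}{t}\log f(t)+\frac{1}{t}\log g(t)$ and the subadditivity $\limsup(a_t+b_t)\leq\limsup a_t+\limsup b_t$ gives the claim whenever the right-hand side is defined (excluding the $\infty-\infty$ case, which is precisely the caveat in the statement).

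There is no real obstacle here; the only subtlety is bookkeeping of the $\pm\infty$ cases, in particular in (vi) and (vii) where one of the summands might be $-\infty$ (allowed by (i)) or $+\infty$. I would therefore state once at the beginning of the proof the conventions $-\infty+x=-\infty$ for $x\in\real$ and $\alpha\cdot(-\infty)=-\infty$ for $\alpha>0$, so that each of the computations above remains valid verbatim in the extended real line, and the assumption "if the right-hand side makes sense" in (vii) can be interpreted as excluding the indeterminate sum $(-\infty)+(+\infty)$.
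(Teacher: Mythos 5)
Your verification is correct, and each item is handled with the standard $\limsup$ arguments; the conventions for $\pm\infty$ you flag at the outset are exactly what is needed to make items (i), (iii), (iv), (vi), and (vii) airtight. Note that the paper itself gives no proof of this proposition — it simply refers to Arnold's book and remarks the properties are "easily verified" — so your write-up supplies the omitted argument rather than competing with an existing one, and it does so cleanly and at an appropriate level of detail.
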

	 As the algorithm consists of two subsequent phases, the Lyapunov index is not enough for discussing Ginelli's algorithm. Each phase has its own runtime that influences the resulting approximation. For a good approximation, both runtimes need to be increased. Certainly, there are circumstances and rules that prescribe a favoring relation between those runtimes. However, we will not discuss them here. Instead, we settle for a formulation that allows two different runtimes. For this purpose, we extend the notion of a Lyapunov index to a formulation depending on two parameters.
	\begin{definition}\label[definition]{definitionExtendedLyapunovIndex}
		The \emph{extended Lyapunov index} $\overline{\lambda}(f)\in \real\cup\{\pm\infty\}$ of a function $f:\mathbb{T}_{\geq 0}\times\mathbb{T}_{\geq 0}\to\real_{\geq 0}$ is defined as the limit
		\begin{equation*}
		\overline{\lambda}(f):=\limsup_{T\to\infty}\,\sup_{t_1,t_2\geq T}\frac{1}{\min(t_1,t_2)}\log f(t_1,t_2).
		\end{equation*}
	\end{definition}
	In contrast to the standard Lyapunov index, the new quantity describes behavior on an exponential timescale as $\min(t_1,t_2)$ is increased. Especially, when fixing a certain relation between both parameters, an upper bound on the speed of convergence is given by the extended Lyapunov index.\footnote{For example, given the relation $t_1=2t_2$ we have $\lambda(f(2t,t))\leq\overline{\lambda}(f)$.} In fact, the extended version exhibits similar properties to the usual Lyapunov index.
	\begin{proposition}\label[proposition]{propositionExtendedLyapunovIndexProperties}
		Rules 1-7 of \textnormal{\cref{propositionLyapunovIndexProperties}} hold true with $\lambda$ replaced by $\overline{\lambda}$. Furthermore, if we extend a function $f:\mathbb{T}_{\geq 0}\to\real_{\geq 0}$ to $\overline{f}:\mathbb{T}_{\geq 0}\times\mathbb{T}_{\geq 0}\to\real_{\geq 0}$ by setting $\overline{f}(t_1,t_2):=f(t_1)$, then
		\begin{enumerate}
			\item[8.] $\lambda(f)<0\implies\overline{\lambda}\left(\overline{f}\right)=\lambda(f)$.
		\end{enumerate}
	\end{proposition}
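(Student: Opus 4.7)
The plan is to dispose of rules 1--7 by directly mimicking the standard arguments for $\lambda$ and then to isolate rule 8, whose proof requires a genuine new ingredient. A useful preliminary observation is that the map $T \mapsto \sup_{t_1,t_2 \geq T}\frac{1}{\min(t_1,t_2)}\log f(t_1,t_2)$ is non-increasing in $T$, so the $\limsup$ in the definition of $\overline{\lambda}$ is actually a monotone limit in $[-\infty,\infty]$. Rules 1 and 2 are immediate. Rules 3 and 4 follow from $\log(\alpha f)=\log\alpha + \log f$ and $\log f^\alpha = \alpha \log f$ together with the fact that $\frac{|\log\alpha|}{\min(t_1,t_2)} \to 0$. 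Rule 5 is monotonicity of $\log$ and of $\sup$. Rule 6 follows from $f+g \leq 2\max(f,g)$ combined with rules 3 and 5. Rule 7 reduces to $\sup(a+b)\leq \sup a + \sup b$ and subadditivity of $\limsup$, applied to $\log(fg)=\log f+\log g$.

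For rule 8, I would fix $T$ and first evaluate the inner supremum by maximizing over $t_2$ with $t_1$ fixed. Since $t_2$ enters only through $\min(t_1,t_2)$, and $\min(t_1,t_2)$ takes every value in $[T,t_1]$ as $t_2$ ranges in $[T,\infty)$, the supremum over $t_2$ of $\frac{1}{\min(t_1,t_2)}\log f(t_1)$ splits according to the sign of $\log f(t_1)$:
\begin{equation*}
\sup_{t_2 \geq T}\frac{\log f(t_1)}{\min(t_1,t_2)} = \begin{cases} \tfrac{1}{T}\log f(t_1), & \log f(t_1) \geq 0,\\[2pt] \tfrac{1}{t_1}\log f(t_1), & \log f(t_1) \leq 0.\end{cases}
\end{equation*}
The hypothesis $\lambda(f)<0$ now enters decisively: it yields some $T_0$ such that $\frac{1}{t}\log f(t) \leq \lambda(f)/2 < 0$, and hence $\log f(t) < 0$, for all $t \geq T_0$ (the case $\lambda(f)=-\infty$ is handled analogously). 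Therefore, once $T \geq T_0$, only the second branch can occur, the inner supremum reduces to $\sup_{t_1 \geq T}\frac{1}{t_1}\log f(t_1)$, and letting $T\to\infty$ recovers $\lambda(f)$ on the nose.

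The main obstacle is not computational but conceptual: it is crucial to see \emph{why} the hypothesis $\lambda(f)<0$ cannot be dropped. If $\log f(t_1)$ were allowed to stay positive along a subsequence, taking $t_2=T$ would produce a contribution $\frac{1}{T}\log f(t_1)$ in which the numerator and denominator scale independently; letting $t_1 \to \infty$ with $T$ fixed then destroys the equality and can even force $\overline{\lambda}(\overline{f})=+\infty$ (as is transparent for $f(t)=e^{at}$, $a>0$). Thus the sign restriction is what aligns the two parameters $t_1,t_2$ in such a way that the extended index collapses back to the classical one.
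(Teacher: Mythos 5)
Your proposal is correct and follows essentially the same route as the paper: the proofs of rules 1--7 are the standard manipulations, and for rule 8 the key step in both is that once $\lambda(f)<0$ forces $\log f(t_1)<0$ for large $t_1$, the factor $\frac{1}{\min(t_1,t_2)}$ applied to a negative quantity is bounded above by $\frac{1}{t_1}$, which collapses the double supremum back to the ordinary one; the reverse inequality $\lambda(f)\le\overline{\lambda}(\overline{f})$ is automatic. The only cosmetic difference is that you evaluate the inner supremum over $t_2$ exactly (splitting on the sign of $\log f(t_1)$), whereas the paper settles for the one-sided bound it needs.
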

	\begin{proof}
		Rules 1,2,4,5 and 7 follow directly from the definition. To show rule 3, we have $f\leq\alpha f$ for $\alpha\geq 1$, and hence
		\begin{equation*}
		\overline{\lambda}(f)\leq\overline{\lambda}(\alpha f)\leq \overline{\lambda}(\alpha)+\overline{\lambda}(f)=\overline{\lambda}(f).
		\end{equation*}
		The case $0<\alpha<1$ follows by looking at $\beta:=\frac{1}{\alpha}$ and $g:=\alpha f$. Moreover, it is easily verified that
		\begin{equation*}
		\overline{\lambda}(f+g)\leq\overline{\lambda}(2\max(f,g))=\overline{\lambda}(\max(f,g))=\max\left(\overline{\lambda}(f),\overline{\lambda}(g)\right).
		\end{equation*}
		Now, let $\overline{f}$ be the extension of some function $f:\mathbb{T}_{\geq 0}\to\real_{\geq 0}$ as above. The relation $\lambda(f)\leq\overline{\lambda}(\overline{f})$ is always satisfied. To show equality, we remark that $\lambda(f)<0$ implies the existence of some $T>0$ with $\log f(t)<0$ for all $t\geq T$. In particular, it holds
		\begin{equation*}
		\sup_{t_1,t_2\geq t}\frac{1}{\min\left(t_1,t_2\right)}\log f\left(t_1\right)\leq \sup_{t_1\geq t}\frac{1}{t_1}\log f\left(t_1\right)
		\end{equation*}
		with right-hand side converging to $\lambda(f)$ for $t\to\infty$.		
	\end{proof}
	We demonstrate two exceptional cases where the function is growing/decaying either too slow or too fast to be captured by the notation.
	\begin{example}\label[example]{exampleExtendedLyapunovIndex}
		Let $f(t_1,t_2):=\left\lceil\min(t_1,t_2)^2\right\rceil$ and $g(t_1,t_2):=\alpha^{2f(t_1,t_2)-1}$ for $0<\alpha<1$. We compute
		\begin{equation*}
		0=\overline{\lambda}\left(1\right)
		\leq \overline{\lambda}\left(f\right)
		\leq \overline{\lambda}\left(\min(t_1,t_2)^2+1\right)
		\leq\max\left(2\overline{\lambda}\left(\min(t_1,t_2)\right),0\right)
		=0
		\end{equation*}
		and
		\begin{equation*}
		\overline{\lambda}(g)=\overline{\lambda}\left(\frac{1}{\alpha}\left(\alpha^{f}\right)^2\right)
		=2\overline{\lambda}\left(\alpha^{f}\right)
		\leq 2\overline{\lambda}\left(\alpha^{\min(t_1,t_2)^2}\right)
		=-\infty.
		\end{equation*}
	\end{example}

	\subsection{Orthogonal Projections}\label{subsectionOrthogonalProjections}
	We present some essential results about orthogonal projection. For most facts, we specifically refer to chapter 1.6 of Kato's book \cite{Kato2013}, the chapter on projections in Galántai's book \cite{Galantai2004} and the chapter by Deutsch \cite{Deutsch1995}.\par
	Amongst others, orthogonal projections are a tool to describe geometric properties of subspaces. We associate a subspace $M\subset\real^d$ and its corresponding orthogonal projection $P_M$ using the standard inner product. Through this identification we can define distances and angles between subspaces, or even speak of converging sequences of subspaces. Since we focus on the euclidean norm $\|\cdot\|_2$, let us drop the subscript and simply write $\|\cdot\|$.\footnote{As all norms on $\real^d$ are equivalent, quantities that are defined on an exponential scale remain the same. In particular, LEs and CLVs are independent of the chosen norm. Moreover, estimates on the exponential speed of convergence in our main theorems in \cref{sectionConvergence} remain unchanged.}
	\begin{definition}\label[definition]{definitionDistance}
		The \emph{distance} between two subspaces $M,N\subset\real^d$ is defined as 
		\begin{equation*}
		d(M,N):=\|P_M-P_N\|.
		\end{equation*}
	\end{definition}
	We state a collection of handy properties mostly from \cite{Galantai2004}.
	\begin{proposition}\label[proposition]{propositionDistance}
		The distance $d$ is a metric on the set of subspaces. Moreover, the following holds for all subspaces $M,N\subset\real^d$:
		\begin{enumerate}
			\item $0\leq d(M,N) \leq 1$,
			\item $d(M,N)=d\left(M^{\perp},N^{\perp}\right)$,
			\item $d(M,N)<1\ \Rightarrow\ \dim(M)=\dim(N)$.
		\end{enumerate}
		In case that $\dim(M)=\dim(N)$, we also have:
		\begin{enumerate}
			\item[4.] $d(M,N)=\|P_M P_{N^{\perp}}\|$,
			\item[5.] $d(M,N)=1\ \iff\ M\cap N^{\perp}\neq\{0\}$.
		\end{enumerate}
		If $V\in O(d,\real)$ is an orthogonal transformation, then
		\begin{enumerate}
			\item[6.] $d(V(M),V(N))=d(M,N)$.
		\end{enumerate}
	\end{proposition}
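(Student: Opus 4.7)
The plan is to verify each assertion separately, leaning on two structural observations. The first is the identity
\begin{equation*}
P_M - P_N = P_M P_{N^\perp} - P_{M^\perp} P_N,
\end{equation*}
obtained by writing $P_M = P_M(P_N + P_{N^\perp})$ and $P_N = (P_M + P_{M^\perp})P_N$. The second is that the two summands have orthogonal ranges (contained in $M$ and $M^\perp$) and disjoint supports (vanishing on $N$ and $N^\perp$ respectively). With this in hand, the metric axioms and item~3 fall out immediately: symmetry and the triangle inequality are inherited from the operator norm, definiteness holds because $M \mapsto P_M$ is a bijection between subspaces and their orthogonal projections, and $P_{M^\perp} = I - P_M$ yields $\|P_{M^\perp} - P_{N^\perp}\| = \|P_M - P_N\|$.

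For item~1, I would combine the identity and the orthogonality observation to obtain
\begin{equation*}
\|P_M - P_N\|^2 = \max\bigl(\|P_M P_{N^\perp}\|^2,\ \|P_{M^\perp} P_N\|^2\bigr),
\end{equation*}
from which the bound $d(M,N)\le 1$ is clear since each factor is a projection. Item~4 follows by contraposition: if $\dim M \neq \dim N$, a dimension count forces either $M \cap N^\perp$ or $N \cap M^\perp$ to contain a unit vector $x$, and for such an $x$ one computes $\|(P_M - P_N)x\| = 1$, hence $d(M,N) = 1$.

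For items~5 and~6, assume $\dim M = \dim N$. Item~5 reduces to showing $\|P_M P_{N^\perp}\| = \|P_{M^\perp} P_N\|$; this will be the main obstacle. The cleanest route is to note that the squares of these two norms are the largest eigenvalues of the self-adjoint operators $P_{N^\perp} P_M P_{N^\perp}$ and $P_N P_{M^\perp} P_N$, which encode the squared sines of the principal angles between the pairs $(M,N)$ and $(M^\perp,N^\perp)$. Under the equal-dimension hypothesis, the CS decomposition of the pair $(P_M, P_N)$ pairs these angles symmetrically, forcing equal maxima. Item~6 then reads $d(M,N) = 1$ iff $\|P_M P_{N^\perp}\| = 1$. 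Compactness of the unit sphere in $N^\perp$ produces a maximizer $x$, and $\|P_M x\| = \|x\|$ forces $x \in M$, hence $x \in M \cap N^\perp$; the reverse implication is immediate by evaluating $P_M - P_N$ on such a vector.

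Finally, for item~7, the conjugation formula $P_{V(M)} = V P_M V^\top$ for $V \in O(d,\real)$ gives $P_{V(M)} - P_{V(N)} = V(P_M - P_N) V^\top$, and the operator norm is invariant under conjugation by orthogonal matrices. Thus the bulk of the work collapses onto the principal-angle identity behind item~5; every other claim reduces to a short algebraic manipulation or a compactness argument on the unit sphere.
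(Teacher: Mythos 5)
Your plan is sound, and since the paper leaves this proposition to its references (Galántai, Kato, Deutsch) your sketch is filling a genuine gap rather than paralleling an argument in the text. The decomposition $P_M - P_N = P_M P_{N^\perp} - P_{M^\perp} P_N$, combined with the observation that the two summands have orthogonal ranges (in $M$ and $M^\perp$) and vanish on the complementary pieces $N$ and $N^\perp$, is exactly the right key identity: it yields $\|P_M - P_N\|^2 = \max\left(\|P_M P_{N^\perp}\|^2, \|P_{M^\perp} P_N\|^2\right)$, from which the bound $d(M,N)\le 1$, the dimension implication, and (granted the equal-norm step) items 4 and 5 follow as you describe, and the orthogonal invariance in item 6 is the conjugation computation you give.

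Two points worth tightening. First, your item labels drift by one from the third item on: your ``item 4 follows by contraposition'' is the paper's item 3, your ``item 5'' is the paper's item 4, your ``item 6'' is the paper's item 5, and your ``item 7'' is the paper's item 6. Second, the equal-norm step $\|P_M P_{N^\perp}\| = \|P_{M^\perp} P_N\|$ under $\dim M = \dim N$ is stated but not really argued; ``the CS decomposition pairs these angles symmetrically'' needs one more sentence to be a proof. Concretely: the CS decomposition simultaneously block-diagonalizes $P_M$ and $P_N$ into $1\times 1$ blocks, indexed by the four intersections $M\cap N$, $M\cap N^\perp$, $M^\perp\cap N$, $M^\perp\cap N^\perp$, and $2\times 2$ blocks carrying a principal angle $\theta\in(0,\pi/2)$. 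Each $2\times 2$ block contributes the same value $\sin\theta$ to both $\|P_M P_{N^\perp}\|$ and $\|P_{M^\perp} P_N\|$; the $1\times 1$ blocks that contribute $1$ are counted by $\dim(M\cap N^\perp)$ and $\dim(M^\perp\cap N)$ respectively, and taking orthogonal complements in $\dim(M+N^\perp) = d - \dim(M^\perp\cap N)$ shows these two dimensions agree precisely when $\dim M = \dim N$. Spelling that out closes the only real gap in the sketch.
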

	Every invertible linear map induces a Lipschitz-continuous transformation of the set of subspaces.
	\begin{corollary}\label[corollary]{corollaryInducedTransformationLipschitz}
		For each $A\in \textnormal{Gl}(d,\real)$ and all subspaces $M,N\subset\real^d$, we have
		\begin{equation*}
		d(A(M),A(N)) \leq \|A\|\,\|A^{-1}\|\,d(M,N).
		\end{equation*}
	\end{corollary}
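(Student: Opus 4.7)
The plan is to reduce to the case $\dim(M) = \dim(N)$ and then exploit item 4 of \cref{propositionDistance} together with a distance-to-subspace interpretation of $\|P_{A(N)^\perp}\cdot\|$. The dimension-mismatch case is immediate: if $\dim(M) \neq \dim(N)$ then, since $A$ is invertible, also $\dim(A(M)) \neq \dim(A(N))$, and item 3 of \cref{propositionDistance} forces both $d(M,N)$ and $d(A(M), A(N))$ to equal $1$; meanwhile $\|A\|\|A^{-1}\| \geq \|A A^{-1}\| = 1$ always, so the inequality is trivial. Hence I may assume $\dim(M) = \dim(N)$, and consequently $\dim(A(M)) = \dim(A(N))$.

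Under this assumption I would first invoke item 4 to rewrite $d(A(M), A(N)) = \|P_{A(M)} P_{A(N)^\perp}\|$. Since orthogonal projections are self-adjoint, passing to the adjoint shows that this norm equals $\|P_{A(N)^\perp} P_{A(M)}\|$, which in turn equals $\sup \{\|P_{A(N)^\perp} u\| : u \in A(M),\ \|u\|=1\}$. Fix such a unit vector $u$ and set $v := A^{-1} u \in M$, so that $\|v\| \leq \|A^{-1}\|$. The crucial step is to read $\|P_{A(N)^\perp} u\|$ as the distance from $u$ to the subspace $A(N)$ and to estimate this infimum by the specific test point $A P_N v \in A(N)$:
\[
\|P_{A(N)^\perp} u\| \;\leq\; \|A v - A P_N v\| \;=\; \|A P_{N^\perp} v\| \;\leq\; \|A\|\, \|P_{N^\perp} v\|.
\]
Applying item 4 once more, now to $M$ and $N$, gives $\|P_{N^\perp} v\| \leq d(M,N)\,\|v\|$ for every $v \in M$, and concatenating the bounds produces $\|P_{A(N)^\perp} u\| \leq \|A\|\|A^{-1}\|\,d(M,N)$; taking the supremum over $u \in A(M)$ with $\|u\|=1$ finishes the proof.

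The only real obstacle is conceptual: $A^{-1}$ does not in general send $A(N)^\perp$ to $N^\perp$, so a naive change of variables inside the operator norm fails, and the oblique projection $A P_N A^{-1}$ onto $A(N)$ does not coincide with $P_{A(N)}$. Reinterpreting $\|P_{A(N)^\perp} u\|$ as $\mathrm{dist}(u, A(N))$ sidesteps this mismatch, because on the subspace $A(N)$ the map $A \circ A^{-1}$ is the identity, so substituting the one clever test point $A P_N v$ reduces the problem to the intrinsic quantity $\|P_{N^\perp} v\|$, where item 4 readily yields the factor $d(M,N)$.
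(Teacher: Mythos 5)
Your proof is correct. The dimension-mismatch case and the reduction to item 4 are fine, and the chain $\|P_{A(N)^\perp}u\|\leq\|u-AP_Nv\|=\|AP_{N^\perp}v\|\leq\|A\|\|P_{N^\perp}v\|\leq\|A\|\,d(M,N)\,\|v\|\leq\|A\|\|A^{-1}\|\,d(M,N)$ closes the argument once you take the supremum over unit $u\in A(M)$.

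The route is genuinely different from the paper's. The paper exploits the identity $(A(N))^\perp=(A^*)^{-1}(N^\perp)$ and expresses $\|P_{A(M)}P_{(A(N))^\perp}\|$ as a double maximum of $\frac{|\langle Ax,(A^*)^{-1}y\rangle|}{\|Ax\|\,\|(A^*)^{-1}y\|}$ over $x\in M$, $y\in N^\perp$; the inner product collapses to $\langle x,y\rangle$, and the two rescaling factors produce $\|A^{-1}\|$ and $\|A^*\|=\|A\|$ in one symmetric sweep. You instead avoid transforming $N^\perp$ altogether: you read $\|P_{A(N)^\perp}u\|$ as $\mathrm{dist}(u,A(N))$ and insert the test point $AP_Nv$, which pushes the whole estimate onto the domain side. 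Your version has the advantage of not needing the adjoint-complement identity (you use self-adjointness of projections only to flip the factor order, a much lighter fact), so it is a bit more elementary and makes the source of each factor transparent; the paper's change-of-variables version is more compact and symmetric in $x$ and $y$, which matches its broader use of the $c_0(M,N)=\|P_MP_N\|$ machinery. One small presentational point: you could obtain $\|P_{N^\perp}v\|\le d(M,N)\|v\|$ directly from $P_{N^\perp}v=P_{N^\perp}P_Mv$ and item 4 without the extra appeal to the distance interpretation there — the distance viewpoint is really only needed for the test-point step.
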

	\begin{proof}
		Fix an invertible map $A$. For subspaces of different dimension, the inequality is trivially satisfied. So, let $M$ and $N$ be of the same dimension. We compute:
		\begin{align*}
		d(A(M),A(N))&=\|P_{A(M)}P_{(A(N))^{\perp}}\|\\
		&=\|P_{A(M)}P_{(A^*)^{-1}N^{\perp}}\|\\
		&=\max_{x\in M\setminus\{0\},\, y\in N^{\perp}\setminus\{0\}}\frac{|\langle Ax,(A^*)^{-1}y\rangle|}{\|Ax\|\,\|(A^*)^{-1}y\|}\\
		&=\max_{x\in M\setminus\{0\},\, y\in N^{\perp}\setminus\{0\}}\frac{|\langle x,y\rangle|}{\|x\|\,\|y\|}\,\frac{\|A^{-1}(Ax)\|}{\|Ax\|}\,\frac{\|A^*((A^*)^{-1}y)\|}{\|(A^*)^{-1}y\|}\\
		&\leq \max_{x\in M\setminus\{0\},\, y\in N^{\perp}\setminus\{0\}}\frac{|\langle x,y\rangle|}{\|x\|\,\|y\|}\,\|A^{-1}\|\,\|A^*\|\\
		&=\|A\|\,\|A^{-1}\|\,\|P_{M}P_{N^{\perp}}\|\\
		&=\|A\|\,\|A^{-1}\|\, d(M,N).
		\end{align*}
		Here, $A^*$ denotes the adjoint map of $A$ with respect to the standard inner product.
	\end{proof}
	The next concept needed is the (minimal) angle between two subspaces. A lot on this topic can be found in \cite{Deutsch1995}.
	\begin{definition}\label[definition]{definitionAngle}
		The cosine of the \emph{angle} between $M$ and $N$ is given by
		\begin{equation*}
		c(M,N):=\max\bigg\{\frac{|\langle x,y\rangle|}{\|x\|\,\|y\|}\ :\ x\in M\cap\left(M\cap N\right)^{\perp},y\in N\cap\left(M\cap N\right)^{\perp},\ x,y\neq 0\bigg\}
		\end{equation*}
		and the cosine of the \emph{minimal angle} between $M$ and $N$ is defined as
		\begin{equation*}
		c_0(M,N):=\max\left\{\frac{|\langle x,y\rangle|}{\|x\|\,\|y\|}\ :\ x\in M,y\in N,\ x,y\neq 0\right\},
		\end{equation*}
		where we set $\max(\emptyset):=0$.
	\end{definition}
	Both definitions agree if $M\cap N=\{0\}$. However, they are different in general. We state a few important properties in order to work with these quantities.
	\begin{proposition}\label[proposition]{propositionAngle}
		The following statements are true for all subspaces $M,N\subset\real^d$:
		\begin{enumerate}
			\item $0\leq c(M,N)\leq c_0(M,N)\leq 1$,
			\item $c(M,N)<1$,
			\item $c_0(M,N)<1\ \iff\ M\cap N=\{0\}$,
			\item $c(M,N)=c(N,M)$ and $c_0(M,N)=c_0(N,M)$,
			\item $c(M,N)=c\left(M^{\perp},N^{\perp}\right)$,
			\item $c_0(M,N)=\|P_M P_N\|$,
			\item $c(M,N)=\|P_M P_N-P_{M\cap N}\|$.
		\end{enumerate}
	\end{proposition}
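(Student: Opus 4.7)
My strategy is to treat Property 6 as the foundational identity and to reduce as many of the other claims as possible to it, since $c_0(M,N)$ is simpler to manipulate through the operator norm than through the raw supremum of inner products. The only truly non-trivial statement is Property 5, which I will come back to last.

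I would first establish Property 6 by unfolding $\|P_M P_N\| = \sup_{\|x\|=1}\|P_M P_N x\|$. Writing $x = x_N + x_{N^\perp}$ with $x_N \in N$, only $x_N$ contributes to $P_N x$, so the supremum can be restricted to unit vectors $y \in N$. Then $\|P_M y\| = \sup_{z\in M,\ \|z\|=1}|\langle z,y\rangle|$ by duality of the projection, and combining the two suprema yields exactly the expression defining $c_0(M,N)$. Properties 1, 3, 4 follow immediately from the definitions together with Cauchy--Schwarz: the maximum set for $c$ sits inside that for $c_0$ (so $c\le c_0$), Cauchy--Schwarz gives $c_0\le 1$ with equality iff some $x\in M$, $y\in N$ are proportional, i.e.\ iff $M\cap N\ne\{0\}$, and symmetry is visible on the face of the definitions. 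Property 7 follows from Property 6 once one notes the orthogonal decomposition $P_M = P_{M\cap N}+P_{M'}$ and $P_N = P_{M\cap N}+P_{N'}$, where $M' := M\cap(M\cap N)^\perp$ and $N' := N\cap(M\cap N)^\perp$; the mixed terms $P_{M\cap N}P_{N'}$ and $P_{M'}P_{M\cap N}$ vanish since $M',N'\perp M\cap N$, so $P_MP_N - P_{M\cap N} = P_{M'}P_{N'}$, and its norm is $c_0(M',N') = c(M,N)$ by definition.

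Property 2 needs a small compactness argument. Restricting to unit vectors in $M\cap(M\cap N)^\perp$ and $N\cap(M\cap N)^\perp$ (if either set is trivial, $c(M,N) = 0$), the supremum in the definition is attained on a compact set. If this maximum were $1$, the equality case of Cauchy--Schwarz would force the maximizers $x,y$ to be proportional, so $x\in M\cap N$; but $x\in(M\cap N)^\perp$ forces $x=0$, contradicting $\|x\|=1$.

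The main obstacle is Property 5, since it is the only identity that genuinely mixes $M,N$ with $M^\perp,N^\perp$. My plan is to apply Property 7 on both sides and massage the resulting expressions. Writing $P_{M^\perp} = I-P_M$ and $P_{N^\perp} = I-P_N$, and using $M^\perp\cap N^\perp = (M+N)^\perp$ with $P_{(M+N)^\perp} = I - P_{M+N}$, one gets
\begin{equation*}
P_{M^\perp}P_{N^\perp} - P_{M^\perp\cap N^\perp} \;=\; P_{M+N} - P_M - P_N + P_MP_N.
\end{equation*}
I would then show this operator has the same norm as $P_MP_N - P_{M\cap N}$ by identifying both operators, on the orthogonal decomposition $\real^d = (M\cap N)\oplus\bigl((M+N)\cap(M\cap N)^\perp\bigr)\oplus(M+N)^\perp$, as block operators that vanish on the outer summands and coincide, up to sign/adjoint, on the middle summand. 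Concretely, both reduce to the operator $P_{M'}P_{N'}$ (respectively its analogue for the complementary angles) on this middle block, whose nonzero singular values are the CS/principal cosines shared by the pairs $(M,N)$ and $(M^\perp,N^\perp)$. Hence the two norms agree, yielding Property 5. The delicate bookkeeping of this final step is what I expect to consume the bulk of the work.
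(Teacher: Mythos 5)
The paper does not prove \cref{propositionAngle}; it states it as a collection of standard facts and points to Kato, Gal\'antai, and Deutsch for references. So the only question is whether your argument is correct, and where it has gaps.

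Your treatment of items 1, 2, 3, 4, 6 and 7 is sound. Taking item~6 as the anchor is a good choice, and deriving item~7 from the orthogonal decompositions $P_M = P_{M\cap N}+P_{M'}$, $P_N = P_{M\cap N}+P_{N'}$ with the cross terms vanishing is exactly the clean route; your compactness argument for item~2 also works.

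Item~5 is where the content is, and your sketch contains a misstep. After applying item~7 to both pairs, what needs comparing is $\|P_{M'}P_{N'}\|$ against $\|P_{(M^\perp)'}P_{(N^\perp)'}\|$, where $(M^\perp)' = M^\perp\cap(M+N)$ is precisely the orthogonal complement of $M'$ inside $W := (M+N)\cap(M\cap N)^\perp$, and likewise for $N'$. Both operators are supported on $W$, but on $W$ they do \emph{not} coincide up to sign or adjoint: the adjoint of $P_{M'}P_{N'}$ is $P_{N'}P_{M'}$, which is a product of the same two projections, not of the complementary ones. The statement you actually need --- and which you gesture at in the next sentence --- is that the two products share their nonzero singular values, i.e.\ the principal cosines of $(M',N')$ inside $W$ coincide with those of the complementary pair. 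That is a real theorem, not a bookkeeping identity: it requires Jordan's two-subspaces theorem (equivalently, the CS decomposition / Halmos two-projections theorem), which splits $W$ into mutually orthogonal one- and two-dimensional subspaces invariant under both $P_{M'}$ and $P_{N'}$. On each two-dimensional invariant plane, replacing a pair of lines by their normals preserves the angle, so the local singular value is unchanged; on the one-dimensional pieces both products vanish (here $M'\cap N'=\{0\}$ by construction, and $(W\ominus M')\cap(W\ominus N') = (M'+N')^{\perp_W} = \{0\}$ since $M'+N'=W$). Your plan is therefore salvageable, but the invariant-plane decomposition is the missing lemma, and it is essentially all of item~5.
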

	One can easily check that $P_M P_N$ is the orthogonal projection onto $M\cap N$ if, and only if, $P_M$ and $P_N$ commute. Nevertheless, if they do not commute, it is still possible to describe $P_{M\cap N}$ via $P_M$ and $P_N$ through the method of alternating projections, which is due to von Neumann \cite{Neumann1950}.
	\begin{theorem}\label[theorem]{theoremAlternatingProjections}
		For each two subspaces $M$ and $N$, the method of alternating projections converges:
		\begin{equation*}
		\lim_{k\to\infty}\|(P_M P_N)^k-P_{M\cap N}\|=0.
		\end{equation*}
	\end{theorem}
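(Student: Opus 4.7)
The plan is to get the theorem as a direct corollary of \Cref{propositionAngle}, with the bonus of geometric convergence at rate $c(M,N)$. Set $T:=P_M P_N$ and $Q:=P_{M\cap N}$. The first step is to note that $Q$ absorbs both $P_M$ and $P_N$ from the left and right: since $M\cap N\subset M$ we have $P_M Q=Q$ and $Q P_M=Q$, and analogously with $N$. This gives $TQ=P_M(P_N Q)=P_M Q=Q$ and $QT=(QP_M)P_N=Q P_N=Q$. In particular, $T$ and $Q$ commute, and $Q^2=Q$.

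The second step is an algebraic identity: for every $k\geq 1$,
\begin{equation*}
(T-Q)^k=T^k-Q.
\end{equation*}
Expanding by the binomial theorem (which applies since $T$ and $Q$ commute) yields
\begin{equation*}
(T-Q)^k=T^k+\sum_{j=0}^{k-1}\binom{k}{j}(-1)^{k-j}T^j Q^{k-j}.
\end{equation*}
Using $TQ=Q$ inductively gives $T^j Q=Q$, and $Q^{k-j}=Q$ for $k-j\geq 1$, so each summand equals $(-1)^{k-j}Q$. The sum over $j=0,\dots,k-1$ of $\binom{k}{j}(-1)^{k-j}$ equals $-1$ (it is the full alternating binomial sum minus the $j=k$ term), which proves the claim.

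The third step is to take operator norms and invoke \Cref{propositionAngle}. By part~7 of that proposition, $\|T-Q\|=c(M,N)$, and by part~2, $c(M,N)<1$. Submultiplicativity then yields
\begin{equation*}
\|(P_M P_N)^k-P_{M\cap N}\|=\|(T-Q)^k\|\leq c(M,N)^k\xrightarrow{k\to\infty}0,
\end{equation*}
which gives convergence together with an explicit geometric rate.

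I do not expect any real obstacle here; the only point that requires a moment's care is verifying that $Q$ commutes with the non-self-adjoint product $T=P_M P_N$ (even though $P_M$ and $P_N$ need not commute with one another), which follows from the inclusions $M\cap N\subset M$ and $M\cap N\subset N$. Once this is in place, the identity $(T-Q)^k=T^k-Q$ and the norm bound from \Cref{propositionAngle} finish the argument without any further analysis. Note that this finite-dimensional argument is substantially shorter than von Neumann's original Hilbert space proof, since in $\mathbb{R}^d$ we automatically have $c(M,N)<1$.
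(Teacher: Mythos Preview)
Your proof is correct. The absorption identities $TQ=QT=Q$ follow exactly as you say, the binomial manipulation is valid, and parts~2 and~7 of \Cref{propositionAngle} give $\|T-Q\|=c(M,N)<1$, so $\|T^k-Q\|\leq c(M,N)^k\to 0$.

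The paper itself does not prove \Cref{theoremAlternatingProjections}; it merely attributes the result to von Neumann and then separately quotes the sharper rate $c(M,N)^{2k-1}$ in \Cref{propositionAlternatingProjectionsEstimate} from the literature. Your argument is therefore a genuine addition: it gives a self-contained finite-dimensional proof using only the angle properties already collected in \Cref{propositionAngle}. The one trade-off is that your bound $c(M,N)^k$ is weaker than the $c(M,N)^{2k-1}$ stated in \Cref{propositionAlternatingProjectionsEstimate} (for $k\geq 2$), so while your argument establishes the theorem, it does not recover the sharper estimate the paper later uses. For the purposes of \Cref{theoremAlternatingProjections} alone, however, your approach is both shorter and more transparent than invoking the general Hilbert-space machinery.
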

	A discussion on the speed of convergence can be found in \cite{Deutsch1995}. The following estimate will be enough for our purposes.
	\begin{proposition}\label[proposition]{propositionAlternatingProjectionsEstimate}
		For each two subspaces $M$ and $N$, it holds
		\begin{equation*}
		\forall k:\, \|(P_M P_N)^k-P_{M\cap N}\|\leq c(M,N)^{2k-1}.
		\end{equation*}
	\end{proposition}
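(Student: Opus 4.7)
My plan is to set $T := P_M P_N - P_{M\cap N}$ and reduce the claim to a bound on $\|T^k\|$. By property 7 of \cref{propositionAngle} we have $\|T\| = c(M,N)$, and a short induction---using that $P_{M\cap N}$ is fixed on either side by both $P_M$ and $P_N$---yields the telescoping identity $T^k = (P_M P_N)^k - P_{M\cap N}$, so the result will follow once I establish $\|T^k\| \leq c(M,N)^{2k-1}$.

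The mechanism for extracting the sharp exponent will be three elementary identities, each verified by direct multiplication:
\begin{equation*}
T P_N = T, \qquad T^* T = P_N T, \qquad T^* P_N = P_N T,
\end{equation*}
the third being the adjoint of the second via self-adjointness of $T^* T$. The first identity, applied inductively, gives $(P_N T)^n = P_N T^n$; combined with the second, this produces $(T^* T)^n = P_N T^n$.

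The decisive step is then to establish
\begin{equation*}
(T^*)^k T^k = (T^* T)^{2k-1}.
\end{equation*}
I will prove this by inserting $T^* T = P_N T$ into the middle of $(T^*)^k T^k$ to obtain $(T^*)^{k-1} P_N T^k$, and then sliding the $P_N$ all the way to the left using $T^* P_N = P_N T$ iteratively, arriving at $P_N T^{2k-1} = (T^* T)^{2k-1}$. Since $T^* T$ is self-adjoint and positive semi-definite, its operator norm equals its spectral radius, so $\|(T^* T)^{2k-1}\| = \|T^* T\|^{2k-1} = \|T\|^{2(2k-1)} = c(M,N)^{4k-2}$. Consequently
\begin{equation*}
\|T^k\|^2 = \|(T^*)^k T^k\| = c(M,N)^{4k-2},
\end{equation*}
which delivers the claimed bound (with equality, in fact).

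The main obstacle is discovering the identity $(T^*)^k T^k = (T^* T)^{2k-1}$: each individual commutation rule with $P_N$ is trivial, but without their combined use one only reaches the submultiplicative bound $\|T^k\| \leq c(M,N)^k$, missing the improvement to $c(M,N)^{2k-1}$ that comes from applying the $C^*$-identity to the self-adjoint operator $T^* T$.
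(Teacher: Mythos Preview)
Your argument is correct, and in fact establishes the equality $\|(P_M P_N)^k - P_{M\cap N}\| = c(M,N)^{2k-1}$ rather than merely the inequality. All three identities $T P_N = T$, $T^*T = P_N T$, and $T^* P_N = P_N T$ check out directly, and the sliding argument producing $(T^*)^k T^k = (T^*T)^{2k-1}$ is valid; the final step uses the $C^*$-identity and the fact that the norm of a self-adjoint operator is its spectral radius, both of which are standard.

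Note, however, that the paper itself does not supply a proof of this proposition: it is stated without proof and attributed to Deutsch's survey \cite{Deutsch1995}. What you have written is essentially a self-contained derivation of the Kayalar--Weinert formula, which is the standard route to this estimate. So rather than differing from the paper's proof, your contribution fills in an argument that the paper chose to cite rather than reproduce; it is the expected proof, executed cleanly.
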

	Utilizing the method of alternating projections, we can relate the distance of two intersections to the distance of intersecting subspaces.
	\begin{proposition}\label[proposition]{propositionIntersectionEstimate}
		Let $M,N\subset\real^d$ be two subspaces, and set $\delta:=c_0\left(M^{\perp},N^{\perp}\right)$.\\
		For all subspaces $M',N'\subset\real^d$ with
		\begin{equation*}
		d(M',M)+d(N',N)\leq \frac{1-\delta}{2},
		\end{equation*}
		we have
		\begin{equation*}
		d(M'\cap N',M\cap N)\leq \delta^{2k-1}+\left(\frac{1+\delta}{2}\right)^{2k-1}+k\,\left(d(M',M)+d(N',N)\right)
		\end{equation*}
		with arbitrary $k\in\natural$.
	\end{proposition}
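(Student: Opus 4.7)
The plan is to split $d(M'\cap N', M\cap N) = \|P_{M'\cap N'} - P_{M\cap N}\|$ via a triangle inequality into three pieces using the method of alternating projections as an intermediary, namely
\begin{align*}
d(M'\cap N', M\cap N) &\leq \|P_{M'\cap N'} - (P_{M'}P_{N'})^k\| \\
&\quad + \|(P_{M'}P_{N'})^k - (P_M P_N)^k\| \\
&\quad + \|(P_M P_N)^k - P_{M\cap N}\|,
\end{align*}
and then to bound each summand separately.

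For the third summand, I would invoke \cref{propositionAlternatingProjectionsEstimate} together with duality of angles: property 5 of \cref{propositionAngle} gives $c(M,N) = c(M^\perp, N^\perp) \leq c_0(M^\perp, N^\perp) = \delta$, so that $\|(P_M P_N)^k - P_{M\cap N}\| \leq c(M,N)^{2k-1} \leq \delta^{2k-1}$. For the first summand I need the analogous estimate with $M',N'$, which requires a perturbation bound $c_0(M'^\perp, N'^\perp) \leq \frac{1+\delta}{2}$. The clean way to get this is to take unit vectors $x \in M'^\perp$, $y \in N'^\perp$, project them onto $M^\perp$ and $N^\perp$ respectively, and observe that the error $\|x - P_{M^\perp}x\| = \|P_M x\| \leq \|P_M P_{M'^\perp}\| = d(M,M')$ via property 4 of \cref{propositionDistance} (and similarly for $y$); expanding $\langle x,y\rangle$ then yields
\begin{equation*}
|\langle x,y\rangle| \leq \delta + d(M',M) + d(N',N) \leq \delta + \frac{1-\delta}{2} = \frac{1+\delta}{2},
\end{equation*}
whence the first summand is bounded by $\left(\frac{1+\delta}{2}\right)^{2k-1}$.

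The middle summand I would handle by a straightforward telescoping argument: writing
\begin{equation*}
(P_{M'}P_{N'})^k - (P_M P_N)^k = \sum_{j=0}^{k-1}(P_{M'}P_{N'})^j\bigl(P_{M'}P_{N'} - P_M P_N\bigr)(P_M P_N)^{k-1-j}
\end{equation*}
and using $\|P_{M'}P_{N'} - P_M P_N\| \leq d(M',M) + d(N',N)$ along with the fact that orthogonal projections have norm at most one gives a bound of $k\bigl(d(M',M) + d(N',N)\bigr)$. Combining the three estimates yields the claim.

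The main obstacle is the perturbation bound in Step 2, since everything downstream depends on controlling $c_0(M'^\perp, N'^\perp)$; the smallness assumption $d(M',M) + d(N',N) \leq \frac{1-\delta}{2}$ is what forces the perturbed angle to stay bounded away from $1$ and thereby keeps $\left(\frac{1+\delta}{2}\right)^{2k-1}$ a genuinely convergent geometric quantity as $k\to\infty$.
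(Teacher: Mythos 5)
Your proposal matches the paper's proof in all essentials: the same three-term triangle-inequality decomposition through the alternating-projection iterates, the same telescoping bound for the middle term, and the same duality $c(\cdot,\cdot)=c(\cdot^\perp,\cdot^\perp)\leq c_0(\cdot^\perp,\cdot^\perp)$ feeding into \cref{propositionAlternatingProjectionsEstimate} for the outer terms. The only cosmetic difference is in bounding $c_0\left(M'^\perp,N'^\perp\right)\leq\frac{1+\delta}{2}$: the paper passes through $\|P_{M'^\perp}P_{N'^\perp}\|$ and a triangle inequality at the operator level, whereas you decompose $\langle x,y\rangle$ for unit vectors $x\in M'^\perp$, $y\in N'^\perp$ directly; both routes give the same bound $\delta+d(M',M)+d(N',N)$.
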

	\begin{proof}
		Assume $M,N$, $\delta$ and $M',N'$ as above. Using the method of alternating projections, we estimate for arbitrary $k\in\natural$:
		\begin{align*}
		\|P_{M'\cap N'}-P_{M\cap N}\|&\leq\|P_{M'\cap N'}-\left(P_{M'}P_{N'}\right)^k\|+\|\left(P_{M'}P_{N'}\right)^k-\left(P_{M}P_{N}\right)^k\|\\
		&\hspace{1em}+\|\left(P_{M}P_{N}\right)^k-P_{M\cap N}\|\\
		&\leq c(M',N')^{2k-1}+\|\left(P_{M'}P_{N'}\right)^k-\left(P_{M}P_{N}\right)^k\|\\
		&\hspace{1em}+c(M,N)^{2k-1}.
		\end{align*}
		Since the minimal angle depends continuously on its subspaces, we have
		\begin{align*}
		c(M',N')&=c\left(\left(M'\right)^{\perp},\left(N'\right)^{\perp}\right)\\
		&\leq c_0\left(\left(M'\right)^{\perp},\left(N'\right)^{\perp}\right)\\
		&= \|P_{\left(M'\right)^{\perp}}P_{\left(N'\right)^{\perp}}\|\\
		&\leq \|P_{\left(M'\right)^{\perp}}P_{\left(N'\right)^{\perp}}-P_{M^{\perp}}P_{\left(N'\right)^{\perp}}\|+\|P_{M^{\perp}}P_{\left(N'\right)^{\perp}}-P_{M^{\perp}}P_{N^{\perp}}\|\\
		&\hspace{1em}+\|P_{M^{\perp}}P_{N^{\perp}}\|\\
		&\leq \|P_{\left(M'\right)^{\perp}}-P_{M^{\perp}}\|+\|P_{\left(N'\right)^{\perp}}-P_{N^{\perp}}\|+\|P_{M^{\perp}}P_{N^{\perp}}\|\\
		&=\|P_{M'}-P_{M}\|+\|P_{N'}-P_{N}\|+\delta\\
		&\leq \frac{1+\delta}{2}.
		\end{align*}
		For the middle summand in the estimate of $\|P_{M'\cap N'}-P_{M\cap N}\|$, we deduce
		\begin{align*}
		&\|\left(P_{M'}P_{N'}\right)^k-\left(P_{M}P_{N}\right)^k\|\\
		&\hspace{1em}\leq\sum_{l=0}^{k-1}\|(P_MP_N)^l(P_{M'}P_{N'})^{k-l}-(P_MP_N)^lP_MP_{N'}(P_{M'}P_{N'})^{k-(l+1)}\|\\
		&\hspace{2em}+\|(P_MP_{N})^lP_MP_{N'}(P_{M'}P_{N'})^{k-(l+1)}-(P_MP_N)^{l+1}(P_{M'}P_{N'})^{k-(l+1)}\|\\
		&\hspace{1em}\leq \sum_{l=0}^{k-1}\|P_{M'}-P_{M}\|+\|P_{N'}-P_{N}\|\\
		&\hspace{1em}=k\,\left(\|P_{M'}-P_{M}\|+\|P_{N'}-P_{N}\|\right).
		\end{align*}
		For the last summand, we remark
		\begin{equation*}
		c(M,N)=c\left(M^{\perp},N^{\perp}\right)\leq c_0\left(M^{\perp},N^{\perp}\right)=\delta.
		\end{equation*}
		Combining the above yields the desired estimate.
	\end{proof}
	Now, assume we are given two converging sequences of subspaces $\left(M_t\right)_{t\in\mathbb{T}}$ and $\left(N_t\right)_{t\in\mathbb{T}}$ with transversal\footnote{Two subspaces $M$ and $N$ are called \textit{transversal} if $M+N=\real^d$. Since $(M+N)^{\perp}=M^\perp\cap N^\perp$, transversality is equivalent to $c_0\left(M^{\perp},N^{\perp}\right)<1$.} limits $M$ and $N$. As an immediate consequence of \cref{propositionIntersectionEstimate} with the right choice of $k=k(t)$, we see that the sequence of intersections $\left(M_t\cap N_t\right)_{t\in\mathbb{T}}$ converges to the intersection of the limits $M\cap N$. Moreover, we show that the speed of convergence on an exponential scale can be preserved in a uniform manner.
	\begin{corollary}\label[corollary]{corollaryConvergenceRateIntersections}
		Let $M,N\subset\real^d$ be two transversal subspaces. Moreover, assume $\left(\mathcal{M}_{t}\right)_{t\in\mathbb{T}}$ and $\left(\mathcal{N}_{t}\right)_{t\in\mathbb{T}}$ are two sequences of collections of subspaces that converge to $M$, resp. $N$, exponentially fast:
		\begin{equation*}
		\lambda_M:=\lambda\left(\sup_{M'\in\mathcal{M}_{t}}d(M',M)\right)<0\hspace{1em}\text{and}\hspace{1em}\lambda_N:=\lambda\left(\sup_{N'\in\mathcal{N}_{t}}d(N',N)\right)<0.
		\end{equation*}
		Then, 
		\begin{equation*}
		\overline{\lambda}\left(\sup_{M'\in\mathcal{M}_{t_1}}\, \sup_{N'\in\mathcal{N}_{t_2}}d(M'\cap N',M\cap N)\right)\leq\max\left(\lambda_M,\lambda_N\right).
		\end{equation*}
	\end{corollary}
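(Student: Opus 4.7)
The plan is to apply \cref{propositionIntersectionEstimate} uniformly in $M'\in\mathcal{M}_{t_1}$ and $N'\in\mathcal{N}_{t_2}$, while choosing the free parameter $k$ as a linear function of $\min(t_1,t_2)$ so as to balance the three error terms against each other. By transversality, $\delta:=c_0(M^\perp,N^\perp)<1$, and by assumption both $\sup_{M'\in\mathcal{M}_t} d(M',M)$ and $\sup_{N'\in\mathcal{N}_t} d(N',N)$ decay exponentially to zero. Hence for all sufficiently large $t_1,t_2$ the hypothesis of \cref{propositionIntersectionEstimate} is met, and the estimate
\begin{equation*}
d(M'\cap N',M\cap N)\leq 2\Bigl(\tfrac{1+\delta}{2}\Bigr)^{2k-1}+k\bigl(d(M',M)+d(N',N)\bigr)
\end{equation*}
holds uniformly over $M'\in\mathcal{M}_{t_1}$ and $N'\in\mathcal{N}_{t_2}$ for every $k\in\natural$.

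Next I would set $k(t_1,t_2):=\lceil\alpha\min(t_1,t_2)\rceil$ with $\alpha:=\mu/(2\log\rho)$, where $\rho:=(1+\delta)/2<1$ and $\mu:=\max(\lambda_M,\lambda_N)<0$; both $\log\rho$ and $\mu$ are negative, so $\alpha>0$. With this choice, the geometric term is bounded above by $\rho^{2k-1}\leq\rho^{-1}\exp(\mu\min(t_1,t_2))$, whose extended Lyapunov index is $\mu$. The prefactor satisfies $k(t_1,t_2)\leq\alpha\min(t_1,t_2)+1$ and hence has extended Lyapunov index $0$, while the two-variable extensions of $\sup_{M'\in\mathcal{M}_{t}}d(M',M)$ and of the analogous quantity for $N'$ have extended Lyapunov indices $\lambda_M$ and $\lambda_N$ respectively, by rule 8 of \cref{propositionExtendedLyapunovIndexProperties}. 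Rules 6 and 7 of the same proposition then bound the extended Lyapunov index of $k(t_1,t_2)\bigl(\sup_{M'}d(M',M)+\sup_{N'}d(N',N)\bigr)$ by $0+\mu=\mu$, and a final application of rule 6 to the sum of the two contributions delivers the claimed estimate.

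The main subtlety lies in calibrating $k(t_1,t_2)$: the geometric factor $\rho^{2k-1}$ only shrinks once $k$ grows with time, yet $k$ also multiplies the slow error $d(M',M)+d(N',N)$ and must not spoil its exponential decay. This works precisely because the extended Lyapunov index, like its one-parameter counterpart, absorbs any factor of at most linear growth in $\min(t_1,t_2)$, which is what pins down the admissible range of $\alpha$ and produces the sharp rate $\max(\lambda_M,\lambda_N)$. The suprema inside the bound cause no extra difficulty since the entire argument is uniform in $M'\in\mathcal{M}_{t_1}$ and $N'\in\mathcal{N}_{t_2}$.
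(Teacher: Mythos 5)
Your proof is correct and follows the same route as the paper: apply \cref{propositionIntersectionEstimate} uniformly over $M'\in\mathcal{M}_{t_1}$, $N'\in\mathcal{N}_{t_2}$, choose $k$ growing with $\min(t_1,t_2)$, and conclude via the extended Lyapunov index calculus of \cref{propositionExtendedLyapunovIndexProperties}. The only difference is your choice $k=\lceil\alpha\min(t_1,t_2)\rceil$, with $\alpha$ tuned so the geometric term has extended Lyapunov index exactly $\max(\lambda_M,\lambda_N)$, whereas the paper takes $k=\lceil\min(t_1,t_2)^2\rceil$ and uses \cref{exampleExtendedLyapunovIndex} to force both geometric terms to index $-\infty$ while the linear factor $k$ still contributes index $0$; both calibrations yield the same bound.
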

	\begin{proof}
		Let $\delta:=c_0\left(M^{\perp},N^{\perp}\right)<1$. Since we have $\lambda_M,\lambda_N<0$ (exp. decay of distances), there is $T>0$ with
		\begin{equation*}
		\sup_{M'\in\mathcal{M}_{t_1}}\, \sup_{N'\in\mathcal{N}_{t_2}}d(M',M)+d(N',N)\leq \frac{1-\delta}{2}
		\end{equation*}
		for all $t_1,t_2\geq T$. Invoking \cref{propositionIntersectionEstimate}, we get
		\begin{align*}
		&\sup_{M'\in\mathcal{M}_{t_1}}\, \sup_{N'\in\mathcal{N}_{t_2}}d(M'\cap N',M\cap N)\\
		&\hspace{1em}\leq \delta^{2k-1}+\left(\frac{1+\delta}{2}\right)^{2k-1}+k\,\left(\sup_{M'\in\mathcal{M}_{t_1}}d(M',M)+\sup_{N'\in\mathcal{N}_{t_2}}d(N',N)\right)
		\end{align*}
		with arbitrary $k\in\natural$. With  $k=k(t_1,t_2):=\left\lceil\min(t_1,t_2)^2\right\rceil$ and by means of \cref{propositionExtendedLyapunovIndexProperties} and \cref{exampleExtendedLyapunovIndex} we compute
		\begin{align*}
		&\overline{\lambda}\left(\sup_{M'\in\mathcal{M}_{t_1}}\, \sup_{N'\in\mathcal{N}_{t_2}}d(M'\cap N',M\cap N)\right)\\
		&\hspace{1em}\leq\max\Bigg(\overline{\lambda}\left(\delta^{2k(t_1,t_2)-1}\right),\,\overline{\lambda}\left(\left(\frac{1+\delta}{2}\right)^{2k(t_1,t_2)-1}\right),\\
		&\hspace{3em}\overline{\lambda}(k(t_1,t_2))+\max\left(\overline{\lambda}\left(\sup_{M'\in\mathcal{M}_{t_1}}d(M',M)\right),\,\overline{\lambda}\left(\sup_{N'\in\mathcal{N}_{t_2}}d(N',N)\right)\right)\Bigg)\\
		&\hspace{1em}= \max\left(\lambda_M,\lambda_N\right).
		\end{align*}
	\end{proof}

	\subsection{Singular Value Decomposition}\label{subsectionSVD}
	We assume degeneracies $d_1+\dots+d_p=d$ with $d_i\geq 1$ to be given. The case $p=d$ is called \emph{nondegenerate}. Moreover, the standard basis of $\real^d$ is denoted by
	\begin{equation*}
	(e):=\left(e_{1_1},e_{1_2},\dots,e_{1_{d_1}},e_{2_1},\dots,e_{2_{d_2}},\dots\dots,e_{p_1},\dots,e_{p_{d_p}}\right).
	\end{equation*}
	In the nondegenerate case, we drop the subindex, i.e. $(e)=(e_1,\dots,e_d)$. Both cases can be translated into each other via $e_{i_k}=e_{d_1+\dots+d_{i-1}+k}$. To further shorten notation, we write $(Ae)$ for the $d$-tuple of vectors we get from applying a linear map $A$ to each vector of $(e)$.
	\begin{definition}\label[definition]{definitionSVD}
		Let $A\in\real^{d\times d}$. The \emph{singular value decomposition (SVD)} of $A$ is given by
		\begin{equation*}
		A=U\Sigma V^T,
		\end{equation*}
		where
		\begin{equation*}
		\Sigma=\textnormal{diag}\left(\sigma_{1_1},\dots,\sigma_{p_{d_p}}\right)
		\end{equation*}
		is the diagonal matrix of \emph{singular values} $\sigma_{i_k}\geq 0$ and $U,V\in \textnormal{O}(d,\real)$ are orthogonal matrices. The columns $(u):=(Ue)$ of $U$ are called \emph{left singular vectors} and the columns $(v):=(Ve)$ of $V$ are called \emph{right singular vectors}.
	\end{definition}
	A connection between left and right singular vectors is established via
	\begin{equation*}
	A v_{i_k}=\sigma_{i_k} u_{i_k}.
	\end{equation*}\par
	In general, the SVD is not unique. Given $A\in \textnormal{Gl}(d,\real)$ we settle for the following ordering:
	\begin{equation}\label{equationSingularValuesOrdering}
	\sigma_{1_1}\geq\dots\geq\sigma_{p_{d_p}}>0.
	\end{equation}
	Later on, every group of singular values will correspond to a different LE. Hence, the inequalities between $\sigma_{i_{d_i}}$ and $\sigma_{(i+1)_1}$ will eventually be strict. In that case, the spaces spanned by singular vectors of one group, i.e. $\textnormal{span}\left(u_{i_1},\dots,u_{i_{d_i}}\right)$ and  $\textnormal{span}\left(v_{i_1},\dots,v_{i_{d_i}}\right)$, are uniquely determined independent of our choice of SVD with \cref{equationSingularValuesOrdering}.\par
	A SVD $\hat{U}\hat{\Sigma} \hat{V}^T$ for the inverse of $A$ is obtained by inverting $A=U\Sigma V^T$ and, heeding \cref{equationSingularValuesOrdering}, reversing the order of singular values and vectors. In other words, a SVD for the inverse is given by $\left(\hat{\sigma}\right)=\left(\frac{1}{\sigma}\right)^r$, $\left(\hat{u}\right)=\left(v\right)^r$, and $\left(\hat{v}\right)=\left(u\right)^r$ with $(.)^r$ being the tuple in reversed order.\par	
	For convenience sake, we denote the smallest and largest singular value in each group by
	\begin{equation*} 
	\sigma_i^{\min}:=\min_{k=1,\dots,d_i}\sigma_{i_k}\hspace{1em}\text{and}\hspace{1em}\sigma_i^{\max}:=\max_{k=1,\dots,d_i}\sigma_{i_k}.
	\end{equation*}

	\subsection{Gram-Schmidt Procedure}\label{subsectionGramSchmidt}
	We define the Gram-Schmidt procedure for subspaces. To this end, let $U_1\oplus\dots\oplus U_p$ be a decomposition of $\real^d$ into subspaces of dimension $\dim U_i=d_i$. Inductively, set
	\begin{equation*}
	F_i:=\bigoplus_{j=1}^{i}U_j\cap \left(\bigoplus_{j=1}^{i-1}U_j\right)^{\perp}
	\end{equation*}
	for $i=1,\dots,p$. Then, $\real^d=F_1\oplus\dots\oplus F_p$ is a decomposition with $\dim F_i=d_i$, $F_i\perp F_j$ for $i\neq j$, and with 
	\begin{equation*}
	\bigoplus_{j=1}^{i}U_j=\bigoplus_{j=1}^{i}F_j
	\end{equation*}
	for all $i$. Actually, the outcome only depends on the filtration
	\begin{equation*}
	\{0\}\subset\overline{U}_1\subset\overline{U}_2\subset\dots\subset\overline{U}_p=\real^d
	\end{equation*}
	given by 
	\begin{equation*}
	\overline{U}_i:=\bigoplus_{j=1}^{i}U_j.
	\end{equation*}\par
	In later scenarios the above spaces are spanned by groups of vectors. Thus, for a given basis $(b)$, set $U_i^{(b)}$ as the span of $b_{i_1},\dots,b_{i_{d_i}}$. From $U_i^{(b)}$ we get $\overline{U}_i^{(b)}$ and $F_i^{(b)}$. The associated orthogonal projection onto $F_i^{(b)}$ will be denoted by
	\begin{equation*}
	P_i^{(b)}:=P_{F_i^{(b)}}.
	\end{equation*}
	It follows that
	\begin{equation*}
	\overline{P}_i^{(b)}:=\sum_{j=1}^i P_j^{(b)}
	\end{equation*}
	is the orthogonal projection onto $\overline{U}_i^{(b)}$. Another consequence of our notation is the relation
	\begin{equation*}
	A\left(\overline{U}_i^{(b)}\right)=\overline{U}_i^{(Ab)}
	\end{equation*}
	for an invertible linear map $A$.

	\subsection{Admissibility}\label{subsectionAdmissibility}
	Ultimately, the MET provides an asymptotic link between singular vectors (resp. singular values) and Oseledets spaces (resp. LEs). Hence, in order to investigate how a tuple of vectors evolves under subsequent application of linear maps and the Gram-Schmidt procedure, we relate it to singular vectors. That relation is represented by a single parameter $\delta$. It describes how strong the corresponding filtrations are correlated. Here, a value of $0$ means no correlation and a value of $1$ implies equality. Thus, we call tuples that have a certain level of correlation admissible. A special task will be to understand how many tuples are at least $\delta$-admissible. For this purpose, we denote by $\mu$ the Lebesgue-measure for the respective dimension.\par
	\begin{definition}\label[definition]{definitionAdmissible}
		Let $0<\delta\leq 1$ and a basis $(c)$ of $\real^d$ be given. A $d$-tuple $(b)$ is called $\delta$-\emph{admissible} with respect to $(c)$ if it is linearly independent and
		\begin{equation*}
		\forall\, i<p:\, d\left(\overline{U}_i^{(b)},\overline{U}_i^{(c)}\right)^2\leq 1-\delta^2.
		\end{equation*}
		We denote the set of all $\delta$-admissible tuples by $\mathcal{A}d^{(c)}(\delta)$ and the set of all tuples that are admissible for some $\delta>0$ by $\mathcal{A}d^{(c)}$.
	\end{definition}
	As admissibility is described by distances of filtration spaces, we are allowed to interchange the involved tuples with their Gram-Schmidt bases. So, let us assume $(c)$ to be an ONB from now on. Moreover, the invariance of distances under orthogonal transformations implies that $\delta$-admissibility of $(b)$ w.r.t. $(c)$ is equivalent to $\delta$-admissibility of $(Vb)$ w.r.t. $(Vc)$ for all $V\in \textnormal{O}(d,\real)$. Hence, $V^d\left(\mathcal{A}d^{(c)}(\delta)\right)$ and $\mathcal{A}d^{(Vc)}(\delta)$ coincide.\par 
	Next, let us proceed with an alternative characterization of admissibility.
	\begin{lemma}\label[lemma]{lemmaAdmissibleAlternateForm}
		A basis $(b)$ is $\delta$-admissible w.r.t. $(c)$ if, and only if, for all $i<p$ and $x\in\overline{U}_i^{(b)}$ with $\|x\|=1$, we have
		\begin{equation*}
		\sum_{j=1}^{i}\sum_k |\langle x,c_{j_k}\rangle|^2\geq\delta^2.
		\end{equation*}
	\end{lemma}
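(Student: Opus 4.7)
The plan is to derive the claimed inequality by expressing the defining distance condition $d(\overline{U}_i^{(b)},\overline{U}_i^{(c)})^2\leq 1-\delta^2$ through orthogonal projections and then exploiting that $(c)$ is an ONB. Since $(b)$ is a basis, both $\overline{U}_i^{(b)}$ and $\overline{U}_i^{(c)}$ have the same dimension $d_1+\dots+d_i$, so property 4 of \Cref{propositionDistance} applies and gives
\begin{equation*}
d\bigl(\overline{U}_i^{(b)},\overline{U}_i^{(c)}\bigr)=\bigl\|P_{\overline{U}_i^{(b)}}\,P_{(\overline{U}_i^{(c)})^{\perp}}\bigr\|.
\end{equation*}

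Next, I would pass to the adjoint to rewrite the operator norm as $\|P_{(\overline{U}_i^{(c)})^{\perp}}\,P_{\overline{U}_i^{(b)}}\|$ and recognize that the supremum defining this norm is attained on the range of $P_{\overline{U}_i^{(b)}}$. Concretely, for any $y\in\real^d$ with $\|y\|=1$, writing $x:=P_{\overline{U}_i^{(b)}}y$ gives $\|P_{(\overline{U}_i^{(c)})^\perp}x\|\leq \|P_{(\overline{U}_i^{(c)})^\perp}\,P_{\overline{U}_i^{(b)}}\|$, and conversely picking $y=x/\|x\|$ for any unit $x\in\overline{U}_i^{(b)}$ recovers $\|P_{(\overline{U}_i^{(c)})^\perp}x\|$. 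This yields
\begin{equation*}
d\bigl(\overline{U}_i^{(b)},\overline{U}_i^{(c)}\bigr)=\sup_{x\in\overline{U}_i^{(b)},\,\|x\|=1}\bigl\|P_{(\overline{U}_i^{(c)})^{\perp}}x\bigr\|.
\end{equation*}

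Using the Pythagorean identity $\|P_{(\overline{U}_i^{(c)})^{\perp}}x\|^2=1-\|P_{\overline{U}_i^{(c)}}x\|^2$ for unit vectors, the condition $d(\overline{U}_i^{(b)},\overline{U}_i^{(c)})^2\leq 1-\delta^2$ is equivalent to
\begin{equation*}
\inf_{x\in\overline{U}_i^{(b)},\,\|x\|=1}\bigl\|P_{\overline{U}_i^{(c)}}x\bigr\|^2\geq\delta^2.
\end{equation*}
Finally, since $(c)$ is taken to be an ONB, the vectors $c_{j_k}$ for $j\leq i$ form an orthonormal basis of $\overline{U}_i^{(c)}$, so Parseval yields $\|P_{\overline{U}_i^{(c)}}x\|^2=\sum_{j=1}^i\sum_k|\langle x,c_{j_k}\rangle|^2$, which directly gives the claimed inequality. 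Since every step is an equivalence (given the basis hypothesis so that dimensions match and property 4 applies), both directions of the lemma follow simultaneously.

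The only subtle point is the reduction of the operator norm to a supremum over $\overline{U}_i^{(b)}$; once that standard identity for products of projections is handled carefully, the remainder is a direct computation based on orthonormality of $(c)$. No deeper obstacle should arise.
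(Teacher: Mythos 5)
Your proposal is correct and follows essentially the same route as the paper: express the distance as the operator norm of a projection product (the paper writes it directly as $\|(I-\overline{P}_i^{(c)})\overline{P}_i^{(b)}\|$, you arrive there via property 4 and the adjoint), reduce the norm to an extremum over unit vectors in $\overline{U}_i^{(b)}$, apply Pythagoras, and finish with Parseval for the ONB $(c)$.
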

	\begin{proof}
		We reformulate the distance between filtration spaces as follows:
		\begin{alignat*}{2}
		\left\|\left(I-\overline{P}_i^{(c)}\right)\overline{P}_i^{(b)}\right\|^2&=\max_{\|x\|=1}\left\|\left(I-\overline{P}_i^{(c)}\right)\overline{P}_i^{(b)}x\right\|^2&&=\max_{\substack{x\in\overline{\mathcal{U}}_i^{(b)} \\ \|x\|=1}}\left\|\left(I-\overline{P}_i^{(c)}\right)x\right\|^2\\
		&=1-\min_{\substack{x\in\overline{\mathcal{U}}_i^{(b)} \\ \|x\|=1}}\left\|\overline{P}_i^{(c)}x\right\|^2&&=1-\min_{\substack{x\in\overline{\mathcal{U}}_i^{(b)} \\ \|x\|=1}}\ \sum_{j=1}^{i}\sum_k |\langle x,c_{j_k}\rangle|^2.
		\end{alignat*}
	\end{proof}
	Now, we are able to relate the evolution of a tuple under a linear map to singular vectors. As it turns out, the relation is sensitive to the admissibility parameter. In fact, being able to control the following estimate was a major reason to introduce the concept of admissibility.
	\begin{proposition}\label[proposition]{propositionAdmissiblePropagation}
		Let $A=U\Sigma V^T$ be invertible and $0<\delta\leq 1$. For all $(b)\in \mathcal{A}d^{(v)}(\delta)$, it holds
		\begin{equation*}
		\forall i:\, d\left(\overline{U}_i^{(Ab)},\overline{U}_i^{(u)}\right)\leq\frac{1}{\delta}\,\frac{\sigma_{i+1}^{\max}}{\sigma_i^{\min}}.
		\end{equation*}
	\end{proposition}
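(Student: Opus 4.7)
My plan is to directly compute the operator norm representation of the distance, then parametrize vectors in $\overline{U}_i^{(Ab)}$ through the right singular basis and exploit the fact that $Av_{j_k}=\sigma_{j_k}u_{j_k}$ links the singular basis directly to the filtration spaces we want to compare.

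First I would dispose of the trivial cases $i=p$ (where both filtration spaces equal $\real^d$, so the distance is $0$) and, using invertibility of $A$, observe that $\dim \overline{U}_i^{(Ab)} = \dim \overline{U}_i^{(b)} = \dim \overline{U}_i^{(u)}$, so \cref{propositionDistance} item 4 applies and gives
\begin{equation*}
d\left(\overline{U}_i^{(Ab)},\overline{U}_i^{(u)}\right) = \left\|P_{\overline{U}_i^{(Ab)}}P_{\left(\overline{U}_i^{(u)}\right)^{\perp}}\right\| = \max_{\substack{y\in \overline{U}_i^{(Ab)}\\ \|y\|=1}}\left\|P_{\left(\overline{U}_i^{(u)}\right)^{\perp}}y\right\|,
\end{equation*}
the last equality coming from passing to the adjoint operator.

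Next I would parametrize any unit $y\in\overline{U}_i^{(Ab)} = A\left(\overline{U}_i^{(b)}\right)$ as $y=A\hat{x}/\|A\hat{x}\|$ with $\hat{x}\in\overline{U}_i^{(b)}$ of unit norm, and expand $\hat{x}=\sum_{j,k}\beta_{j_k}v_{j_k}$ so that $\sum_{j,k}\beta_{j_k}^2=1$. Using $Av_{j_k}=\sigma_{j_k}u_{j_k}$ and orthogonality of $(u)$, I compute
\begin{equation*}
\left\|P_{\left(\overline{U}_i^{(u)}\right)^{\perp}}A\hat{x}\right\|^2 = \sum_{j>i}\sum_k \beta_{j_k}^2\sigma_{j_k}^2 \leq \left(\sigma_{i+1}^{\max}\right)^2,
\end{equation*}
since $\sum_{j,k}\beta_{j_k}^2=1$. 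For the denominator, I use \cref{lemmaAdmissibleAlternateForm}: since $\hat{x}\in\overline{U}_i^{(b)}$ is a unit vector and $(b)\in\mathcal{A}d^{(v)}(\delta)$, we have $\sum_{j\leq i}\sum_k\beta_{j_k}^2 = \sum_{j\leq i}\sum_k|\langle\hat{x},v_{j_k}\rangle|^2\geq\delta^2$, so
\begin{equation*}
\|A\hat{x}\|^2 = \sum_{j,k}\beta_{j_k}^2\sigma_{j_k}^2 \geq \left(\sigma_i^{\min}\right)^2\sum_{j\leq i}\sum_k\beta_{j_k}^2 \geq \left(\sigma_i^{\min}\right)^2\delta^2.
\end{equation*}

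Combining the two estimates yields
\begin{equation*}
\left\|P_{\left(\overline{U}_i^{(u)}\right)^{\perp}}y\right\|^2 = \frac{\left\|P_{\left(\overline{U}_i^{(u)}\right)^{\perp}}A\hat{x}\right\|^2}{\|A\hat{x}\|^2} \leq \frac{1}{\delta^2}\,\frac{\left(\sigma_{i+1}^{\max}\right)^2}{\left(\sigma_i^{\min}\right)^2},
\end{equation*}
and taking the maximum over $y$ gives the desired bound. I do not expect any real obstacle here; the only subtle point is the correct invocation of admissibility to separate the ``good'' singular-value contributions in the denominator from the ``bad'' ones in the numerator, which is precisely what \cref{lemmaAdmissibleAlternateForm} was set up to provide.
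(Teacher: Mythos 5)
Your proof is correct and takes essentially the same approach as the paper: express vectors in $\overline{U}_i^{(b)}$ in the right singular basis, bound the projected image $\|P_{(\overline{U}_i^{(u)})^{\perp}}A\hat{x}\|$ from above by $\sigma_{i+1}^{\max}$, bound $\|A\hat{x}\|$ from below by $\delta\,\sigma_i^{\min}$ using \cref{lemmaAdmissibleAlternateForm}, and combine. The paper writes the coefficients as inner products $\langle x,v_{j_k}\rangle$ rather than introducing names $\beta_{j_k}$, and rewrites $d(\cdot,\cdot)$ as $\|(I-\overline{P}_i^{(u)})\overline{P}_i^{(Ab)}\|$ directly rather than passing through \cref{propositionDistance} item 4 and the adjoint, but these are cosmetic differences.
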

	\begin{proof}
		First, express $x\in\real^d$ using right singular vectors:
		\begin{equation*}
		x=\sum_{j_k}\langle x,v_{j_k}\rangle v_{j_k}.
		\end{equation*}
		Applying the linear map $A=U\Sigma V^T$, we get
		\begin{equation*}
		Ax=\sum_{j_k}\langle x,v_{j_k}\rangle\sigma_{j_k} u_{j_k}\hspace{1em}
		\Rightarrow\hspace{1em} \|Ax\|^2=\sum_{j_k}|\langle x,v_{j_k}\rangle|^2\sigma_{j_k}^2.
		\end{equation*}
		For $x\in\overline{U}_i^{(b)}$ with $\|x\|=1$, this means
		\begin{equation*}
		\|Ax\|^2\geq \sum_{j=1}^i \sum_k |\langle x,v_{j_k}\rangle|^2\sigma_{j_k}^2\geq \left(\sigma_{i}^{\min}\right)^2\sum_{j=1}^i \sum_k |\langle x,v_{j_k}\rangle|^2\geq\delta^2\left(\sigma_{i}^{\min}\right)^2
		\end{equation*}
		by admissibility of $(b)$. Moreover, the following holds for $x\in\real^d$ with $\|x\|=1$:
		\begin{equation*}
		\left\|\left(I-\overline{P}_i^{(u)}\right)Ax\right\|^2= \sum_{j>i}\sum_k |\langle x,v_{j_k}\rangle|^2\sigma_{j_k}^2\leq \left(\sigma_{i+1}^{\max}\right)^2.
		\end{equation*}
		Now, we compute:
		\begin{alignat*}{2}
		d\left(\overline{U}_i^{(Ab)},\overline{U}_i^{(u)}\right)&=\left\|\left(I-\overline{P}_i^{(u)}\right)\overline{P}_i^{(Ab)}\right\|&&=\max_{y\in\overline{U}_i^{(Ab)}\setminus\{0\}}\frac{\left\|\left(I-\overline{P}_i^{(u)}\right)y\right\|}{\|y\|}\\
		&=\max_{x\in\overline{U}_i^{(b)}\setminus\{0\}}\frac{\left\|\left(I-\overline{P}_i^{(u)}\right)A x\right\|}{\|A x\|}&&=\max_{\substack{x\in\overline{U}_i^{(b)} \\ \|x\|=1}}\frac{\left\|\left(I-\overline{P}_i^{(u)}\right)A x\right\|}{\|Ax\|}\\
		&\leq\frac{1}{\delta}\,\frac{\sigma_{i+1}^{\max}}{\sigma_i^{\min}}.
		\end{alignat*}
	\end{proof}
	The above proposition describes behavior only of admissible tuples. However, it turns out that almost all tuples are admissible. Indeed, for admissibility to be generic, the complement of the open set
	\begin{equation*}
	\mathcal{A}d^{(c)}=\left\{(b)\text{ basis }|\ \forall i:\, d\left(\overline{U}_i^{(b)},\overline{U}_i^{(c)}\right)<1\right\}\subset\left(\real^d\right)^d
	\end{equation*}
	must be a set of measure zero. Using \cref{propositionDistance}, we can rewrite the condition as follows:
	\begin{equation*}
	d\left(\overline{U}_i^{(b)},\overline{U}_i^{(c)}\right)<1\iff\overline{U}_i^{(b)}\oplus\left(\overline{U}_i^{(c)}\right)^{\perp}=\real^d.
	\end{equation*}
	Since $(c)$ is an ONB, we yet have another equivalent formulation on the level of basis vectors:
	\begin{equation*}
	d\left(\overline{U}_i^{(b)},\overline{U}_i^{(c)}\right)<1\iff\det\left(b_{1_1},\dots,b_{i_{d_i}},c_{(i+1)_1},\dots,c_{p_{d_p}}\right) \neq 0.
	\end{equation*}
	This form easily reveals the following:
	\begin{proposition}\label[proposition]{propositionAdmissibleGeneric}
		The set of nonadmissible tuples $\left(\real^d\right)^d\setminus\mathcal{A}d^{(c)}$ has Lebesgue-measure zero.
	\end{proposition}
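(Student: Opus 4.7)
The plan is to express the complement $\left(\real^d\right)^d\setminus\mathcal{A}d^{(c)}$ as a finite union of zero sets of polynomials in the coordinates of $(b)$, and invoke the standard fact that the zero set of a non-identically-zero polynomial on $\real^N$ has Lebesgue measure zero.

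More concretely, by definition a tuple $(b)$ fails to be admissible exactly when either it is linearly dependent or there exists some $i<p$ with $d\!\left(\overline{U}_i^{(b)},\overline{U}_i^{(c)}\right)=1$. The first condition is equivalent to
\begin{equation*}
P_0(b):=\det\!\left(b_{1_1},\dots,b_{p_{d_p}}\right)=0,
\end{equation*}
and by the equivalence established just before the proposition, the second condition is equivalent to the existence of some $i<p$ with
\begin{equation*}
P_i(b):=\det\!\left(b_{1_1},\dots,b_{i_{d_i}},c_{(i+1)_1},\dots,c_{p_{d_p}}\right)=0.
\end{equation*}
Hence
\begin{equation*}
\left(\real^d\right)^d\setminus\mathcal{A}d^{(c)}=\bigcup_{i=0}^{p-1}\{P_i=0\}.
\end{equation*}

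Each $P_i$ is a polynomial in the entries of the vectors $b_{j_k}$, so it suffices to check that none of the $P_i$ is the zero polynomial. This is where I would plug in the explicit test point $(b)=(c)$: since $(c)$ is an ONB, every determinant $P_i(c)$ equals $\pm 1\neq 0$, so each $P_i$ is a nontrivial polynomial. The zero set of a nontrivial polynomial on a finite-dimensional real vector space has Lebesgue measure zero, and a finite union of null sets is still null. This yields the claim.

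I do not expect any real obstacle here; the only thing to be careful about is correctly translating the two ways of failing admissibility (lack of linear independence versus distance exactly equal to $1$) into the vanishing of polynomial determinants, which the preceding discussion in the paper already prepares.
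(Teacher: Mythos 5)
Your proposal is correct and follows essentially the same route as the paper: both reduce nonadmissibility to the vanishing of finitely many determinant polynomials (the paper's determinants, read off from the equivalence displayed just before the proposition) and then invoke that the zero set of a nontrivial polynomial is Lebesgue-null. The only cosmetic difference is that you list the linear-independence determinant $P_0$ separately, whereas the paper subsumes it as the $i=p$ case of the same family, and you explicitly note that $P_i(c)=\pm 1$ certifies nontriviality, which the paper leaves implicit.
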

	\begin{proof}
		In the above expression write vectors of $(b)$ as coefficients in terms of $(c)$. Now, the claim is a direct consequence of the fact that $\det^{-1}(0)\subset\real^{k\times k}$ is a subset of measure zero for all $k\geq 1$.
	\end{proof}
	Restricted to a domain of finite measure, the last proposition tells us that the measure of non-$\delta$-admissible tuples converges to zero as $\delta$ goes to zero.
	\begin{corollary}\label[corollary]{corollaryAdmissibleFiniteMeasure}
		For each subset $\mathcal{F}\subset\left(\real^d\right)^d$ of finite Lebesgue-measure, it holds
		\begin{equation*}
		\lim_{\delta\searrow 0}\mu\left(\mathcal{F}\setminus\mathcal{A}d^{(c)}(\delta)\right)=0.
		\end{equation*}
	\end{corollary}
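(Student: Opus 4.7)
The plan is to exploit monotonicity of the family $\mathcal{A}d^{(c)}(\delta)$ in $\delta$ and reduce the claim to continuity of measure from above, using \cref{propositionAdmissibleGeneric} as the base case.

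First I would observe that the definition immediately gives $\mathcal{A}d^{(c)}(\delta_1)\subset\mathcal{A}d^{(c)}(\delta_2)$ whenever $\delta_1\geq\delta_2>0$, since the constraint $d(\overline{U}_i^{(b)},\overline{U}_i^{(c)})^2\leq 1-\delta^2$ weakens as $\delta$ decreases. Consequently the complements $\mathcal{F}\setminus\mathcal{A}d^{(c)}(\delta)$ form a monotonically decreasing family of measurable subsets of $\mathcal{F}$ as $\delta\searrow 0$. Moreover, a basis $(b)$ lies in $\mathcal{A}d^{(c)}$ precisely when every $d(\overline{U}_i^{(b)},\overline{U}_i^{(c)})<1$, which is equivalent to saying that $(b)$ belongs to $\mathcal{A}d^{(c)}(\delta)$ for some small enough $\delta>0$. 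Hence
\begin{equation*}
\bigcup_{\delta>0}\mathcal{A}d^{(c)}(\delta)=\mathcal{A}d^{(c)},
\end{equation*}
which dually means
\begin{equation*}
\bigcap_{\delta>0}\bigl(\mathcal{F}\setminus\mathcal{A}d^{(c)}(\delta)\bigr)=\mathcal{F}\setminus\mathcal{A}d^{(c)}.
\end{equation*}

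Next I would invoke \cref{propositionAdmissibleGeneric} to conclude that the limiting set $\mathcal{F}\setminus\mathcal{A}d^{(c)}$ is a Lebesgue null set, being a subset of the null set $(\real^d)^d\setminus\mathcal{A}d^{(c)}$. Since $\mathcal{F}$ has finite Lebesgue measure, the measurable sets $\mathcal{F}\setminus\mathcal{A}d^{(c)}(\delta)$ all have finite measure, so continuity of measure from above applies along any sequence $\delta_n\searrow 0$:
\begin{equation*}
\lim_{n\to\infty}\mu\bigl(\mathcal{F}\setminus\mathcal{A}d^{(c)}(\delta_n)\bigr)=\mu\bigl(\mathcal{F}\setminus\mathcal{A}d^{(c)}\bigr)=0.
\end{equation*}
Monotonicity in $\delta$ upgrades the sequential limit to the full limit $\delta\searrow 0$, yielding the claim.

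The only mild subtlety is verifying that $\mathcal{A}d^{(c)}(\delta)$ is a Borel (hence measurable) subset of $(\real^d)^d$, which follows because the filtration spaces $\overline{U}_i^{(b)}$ depend continuously on $(b)$ on the open set $\mathcal{A}d^{(c)}$ and the distance $d(\cdot,\cdot)$ is continuous in its arguments; I do not expect any genuine obstacle here. The argument requires no new estimates beyond what has already been set up.
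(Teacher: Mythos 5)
Your argument is correct and follows essentially the same route as the paper: both reduce the claim to continuity of the Lebesgue measure from above on the decreasing family $\mathcal{F}\setminus\mathcal{A}d^{(c)}(\delta)$, identify the intersection as $\mathcal{F}\setminus\mathcal{A}d^{(c)}$, and invoke \cref{propositionAdmissibleGeneric} for nullity. The only difference is that you spell out the monotonicity and measurability points that the paper leaves implicit.
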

	\begin{proof}
		This is a direct consequence of the previous result and continuity of the Lebesgue measure:
		\begin{equation*}
		\lim_{\delta\searrow 0}\mu\left(\mathcal{F}\setminus\mathcal{A}d^{(c)}(\delta)\right)=\mu\left(\bigcap_{0<\delta\leq 1}\mathcal{F}\setminus\mathcal{A}d^{(c)}(\delta)\right)=\mu\left(\mathcal{F}\setminus\mathcal{A}d^{(c)}\right)=0.
		\end{equation*}
	\end{proof}
	In the second part of Ginelli's algorithm, we need a special domain for initial tuples $(b)$. Namely, we look at
	\begin{equation*}
	b_{i_1},\dots,b_{i_{d_i}}\in\textnormal{span}\left(c_{i_1},\dots,c_{p_{d_p}}\right)=U^{(c)}_i\oplus\dots\oplus U^{(c)}_p.
	\end{equation*}
	Instead of admissibility, it will be enough that $b_{i_1},\dots,b_{i_{d_i}}$ can be extended to an admissible tuple of the form
	\begin{equation*}
	\left(*,\dots,*,b_{i_1},\dots,b_{i_{d_i}},*,\dots,*\right)\in\mathcal{A}d^{(c)}(\delta)
	\end{equation*}
	for each index $i$. The set of all $(b)$ satisfying this extension property will be denoted by $\mathcal{A}d_{\textnormal{ext}}^{(c)}(\delta)$. We write $\mathcal{A}d_{\textnormal{ext}}^{(c)}$ for the union of these sets over $0<\delta\leq 1$.\par 
	As before, one readily checks that $V^d\left(\mathcal{A}d_{\textnormal{ext}}^{(c)}(\delta)\right)=\mathcal{A}d_{\textnormal{ext}}^{(Vc)}(\delta)$ for $V\in \textnormal{O}(d,\real)$. Moreover, we again conclude that almost all tuples satisfy extendable admissibility.
	\begin{proposition}\label[proposition]{propositionAdmissibleExtGeneric}
		The set
		\begin{equation*}
		\left(\left(U^{(c)}_1\oplus\dots\oplus U^{(c)}_p\right)^{d_1}\times\left(U^{(c)}_2\oplus\dots\oplus U^{(c)}_p\right)^{d_2}\times\dots\times\left(U^{(c)}_p\right)^{d_p}\right)\setminus\mathcal{A}d^{(c)}_{\textnormal{ext}}
		\end{equation*}
		has Lebesgue-measure zero.
	\end{proposition}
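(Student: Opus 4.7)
The plan is to mimic the strategy used for \cref{propositionAdmissibleGeneric}: reduce extendable admissibility to a non-vanishing polynomial condition. To that end, for each block index $i\in\{1,\dots,p\}$ I will exhibit a single concrete extension and show that its admissibility is equivalent to the invertibility of a small matrix built out of the $i$-th block.

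Given $(b)$ in the stated product, let $\pi_i$ denote the orthogonal projection onto $U_i^{(c)}$. For fixed $i$, consider the canonical extension $(\tilde b)$ obtained by setting $\tilde b_{j_k}:=c_{j_k}$ for every $j\neq i$ and $\tilde b_{i_k}:=b_{i_k}$. By the determinantal reformulation of admissibility used in the proof of \cref{propositionAdmissibleGeneric}, $(\tilde b)\in\mathcal{A}d^{(c)}$ iff $\det(\tilde b_{1_1},\dots,\tilde b_{j_{d_j}},c_{(j+1)_1},\dots,c_{p_{d_p}})\neq 0$ for every $j$. For $j<i$ the matrix equals the identity; for $j\geq i$ the tuple contains every $c_{l_k}$ with $l\neq i$, so column operations that subtract the $c_{l_k}$-components from each $b_{i_k}$ (using $b_{i_k}\in U_i^{(c)}\oplus\dots\oplus U_p^{(c)}$) reduce the determinant to $\pm\det(B_i)$, where $B_i$ is the $d_i\times d_i$ matrix whose columns are the coordinates of $\pi_i b_{i_1},\dots,\pi_i b_{i_{d_i}}$ in the basis $(c_{i_1},\dots,c_{i_{d_i}})$. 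Hence the canonical extension is admissible precisely when $\det(B_i)\neq 0$, and in that case $b_{i_1},\dots,b_{i_{d_i}}$ admits at least one admissible extension, as required for the $i$-th slot of the definition of $\mathcal{A}d_{\textnormal{ext}}^{(c)}$.

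Consequently, if $\det(B_i)\neq 0$ for every $i$, then $(b)\in\mathcal{A}d_{\textnormal{ext}}^{(c)}$. Now $\det(B_i)$ is a polynomial in the coefficients of the $i$-th block (expressed in $(c)$) and it is not identically zero, since choosing $b_{i_k}=c_{i_k}$ yields $\det(B_i)=1$. Therefore the zero set of $\det(B_i)$ inside the $i$-th factor $(U_i^{(c)}\oplus\dots\oplus U_p^{(c)})^{d_i}$ is Lebesgue-null, and pulling back to the product introduces only a null set. Taking the finite union over $i=1,\dots,p$ concludes the proof.

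The only non-routine step is checking that the canonical extension really does collapse to $\det(B_i)$ after the column reduction; this amounts to writing each $b_{i_k}$ as $\pi_i b_{i_k}+\sum_{l>i}\pi_l b_{i_k}$ and subtracting the $\pi_l b_{i_k}$ contributions from the corresponding $c_{l_m}$ columns, which are present in the tuple for every $l\neq i$. Everything else is a direct application of the facts already established for $\mathcal{A}d^{(c)}$.
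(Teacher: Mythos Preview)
Your argument is correct and takes a genuinely different route from the paper's own proof.

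The paper proceeds by dimension reduction: for each fixed $i$ it identifies $U_i^{(c)}\oplus\dots\oplus U_p^{(c)}$ with $\real^{d'}$ (equipped with degeneracies $d_i,\dots,d_p$), invokes \cref{propositionAdmissibleGeneric} in that smaller space to obtain an admissible extension $(b_{i_1},\dots,b_{i_{d_i}},g_{(i+1)_1},\dots,g_{p_{d_p}})$ inside $U_i^{(c)}\oplus\dots\oplus U_p^{(c)}$, and then completes it to a $\delta$-admissible tuple in $\real^d$ by prepending $c_{j_k}$ for $j<i$. Your approach instead fixes the \emph{canonical} extension $\tilde b_{j_k}=c_{j_k}$ for all $j\neq i$ from the start and reduces admissibility of this particular extension directly to the nonvanishing of the single block determinant $\det(B_i)$, then appeals to the fact that a nontrivial polynomial vanishes on a null set.

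What each approach buys: your argument is more elementary and self-contained, avoiding the transfer between $\real^{d'}$ and the subspace and the verification that the orthogonal-sum structure preserves the distance bounds. The paper's argument is more modular, reusing \cref{propositionAdmissibleGeneric} as a black box; this pays off later, since the same reduction scheme is recycled verbatim in the quantitative estimate of \cref{propositionAdmissibleExtMeasureEstimate}, where one needs the measure bound from \cref{propositionAdmissibleMeasureEstimate} rather than merely the null-set statement.

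One cosmetic point: for $j<i$ the matrix you write is the change-of-basis matrix of the ONB $(c)$, hence has determinant $\pm 1$, not literally the identity; this does not affect the argument.
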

	\begin{proof}
		For each $i$, we show that the set of tuples
		\begin{equation*}
		\left(b_{i_1},\dots,b_{i_{d_i}}\right)\in\left(U^{(c)}_i\oplus\dots\oplus U^{(c)}_p\right)^{d_i}
		\end{equation*}
		not satisfying the extension property has Lebesgue-measure zero.\par
		The idea is to apply \cref{propositionAdmissibleGeneric} to a reduced setting for fixed $i$. To this end, look at $\real^{d'}$ with degeneracies $d'=d_1'+\dots+d_{p'}'$ given by $d_j':=d_{i-1+j}$ for all $j=1,\dots,p':=p+1-i$, and let $(e')$ be its standard basis. We get 
		\begin{equation*}
		\mu\left(\left(\real^{d'}\right)^{d'}\setminus\mathcal{A}d^{(e')}\right)=0.
		\end{equation*}
		In particular, this implies
		\begin{equation*}
		\mu\left(\left(\real^{d'}\right)^{d_1'}\setminus\left\{\left(b_{1_1}',\dots,b_{1_{d_1'}}'\right)\text{ has admissible extension}\right\}\right)=0.
		\end{equation*}
		Now, we transfer the result from $\real^{d'}$ to $U^{(c)}_i\oplus\dots\oplus U^{(c)}_p$ by identifying $(e')$ with $(c_{i_1},\dots,c_{p_{d_p}})$. As an identification between orthonormal bases, Lebesgue-measure, distance between subspaces, and admissibility are preserved. Hence, for almost all given tuples $\left(b_{i_1},\dots,b_{i_{d_i}}\right)\in\left(U^{(c)}_i\oplus\dots\oplus U^{(c)}_p\right)^{d_i}$, we find $0<\delta\leq 1$ and $g_{(i+1)_1},\dots,g_{p_{d_p}}\in U^{(c)}_i\oplus\dots\oplus U^{(c)}_p$ such that
		\begin{equation*}
		d\left(\textnormal{span}\left(b_{i_1},\dots,b_{i_{d_i}}\right),U^{(c)}_i\right)^2\leq 1-\delta^2
		\end{equation*}
		and
		\begin{equation*}
		\forall\, j>i:\,d\left(\textnormal{span}\left(b_{i_1},\dots,b_{i_{d_i}},g_{(i+1)_1},\dots,g_{j_{d_j}}\right),U^{(c)}_i\oplus\dots\oplus U^{(c)}_j\right)^2\leq 1-\delta^2.
		\end{equation*}
		We can extend such a tuple 
		\begin{equation*}
		\left(b_{i_1},\dots,b_{i_{d_i}},g_{(i+1)_1},\dots,g_{p_{d_p}}\right)
		\end{equation*}
		to a $\delta$-admissible tuple $\left(g\right)$ by setting $g_{j_k}:=c_{j_k}$ for $j<i$. This concludes the proof.
	\end{proof}
	As a consequence, we get the following corollary: 
	\begin{corollary}\label[corollary]{corollaryAdmissibleExtFiniteMeasure}
		Given a subset $\mathcal{F}\subset\left(U^{(c)}_1\oplus\dots\oplus U^{(c)}_p\right)^{d_1}\times\dots\times\left(U^{(c)}_p\right)^{d_p}$ of finite Lebesgue-measure, it holds
		\begin{equation*}
		\lim_{\delta\searrow 0}\mu\left(\mathcal{F}\setminus\mathcal{A}d_{\textnormal{ext}}^{(c)}(\delta)\right)=0.
		\end{equation*}
	\end{corollary}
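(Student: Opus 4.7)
The plan is to mirror the proof of \cref{corollaryAdmissibleFiniteMeasure} almost verbatim, with $\mathcal{A}d^{(c)}$ replaced by $\mathcal{A}d_{\textnormal{ext}}^{(c)}$, so that the work reduces to invoking \cref{propositionAdmissibleExtGeneric} together with continuity of the Lebesgue measure. No additional structural ideas should be necessary, since the hard part (genericity on the relevant product of subspaces) has already been handled in the preceding proposition.

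The first step is to observe the monotonicity $\mathcal{A}d_{\textnormal{ext}}^{(c)}(\delta_1)\subset\mathcal{A}d_{\textnormal{ext}}^{(c)}(\delta_2)$ whenever $\delta_2\leq\delta_1$. This follows directly from the definition of $\delta$-admissibility and the extension property: any admissible extension witnessing membership in $\mathcal{A}d_{\textnormal{ext}}^{(c)}(\delta_1)$ is also an admissible extension for the weaker parameter $\delta_2$. Consequently the sets $\mathcal{F}\setminus\mathcal{A}d_{\textnormal{ext}}^{(c)}(\delta)$ are monotonically decreasing as $\delta\searrow 0$, and their intersection equals $\mathcal{F}\setminus\mathcal{A}d_{\textnormal{ext}}^{(c)}$.

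The second step is to apply continuity of the Lebesgue measure from above, which is legitimate because $\mathcal{F}$ has finite measure and all the sets in the nested family are contained in $\mathcal{F}$. This gives
\begin{equation*}
\lim_{\delta\searrow 0}\mu\left(\mathcal{F}\setminus\mathcal{A}d_{\textnormal{ext}}^{(c)}(\delta)\right)=\mu\left(\bigcap_{0<\delta\leq 1}\mathcal{F}\setminus\mathcal{A}d_{\textnormal{ext}}^{(c)}(\delta)\right)=\mu\left(\mathcal{F}\setminus\mathcal{A}d_{\textnormal{ext}}^{(c)}\right).
\end{equation*}
Finally, since $\mathcal{F}$ lies in the product of flags described in \cref{propositionAdmissibleExtGeneric}, the set $\mathcal{F}\setminus\mathcal{A}d_{\textnormal{ext}}^{(c)}$ is contained in the null set from that proposition, hence has measure zero. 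There is essentially no obstacle here; the only mild point to flag is that the monotonicity has to be read off correctly from the extension condition, and that invoking continuity from above requires the finite-measure hypothesis on $\mathcal{F}$.
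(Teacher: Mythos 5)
Your proof is correct and follows exactly the implicit route the paper intends: the paper states this corollary "as a consequence" of \cref{propositionAdmissibleExtGeneric} without a written proof, and the argument you supply (monotonicity of $\mathcal{A}d_{\textnormal{ext}}^{(c)}(\delta)$ in $\delta$, continuity of Lebesgue measure from above on the nested family inside the finite-measure set $\mathcal{F}$, then containment in the null set of \cref{propositionAdmissibleExtGeneric}) is precisely the one that mirrors the written proof of \cref{corollaryAdmissibleFiniteMeasure}.
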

	In the discrete time convergence proof of Ginelli's algorithm, a more precise measure-estimate on non-$\delta$-admissible tuples will be necessary. However, it will be sufficient to know the case, where $\mathcal{F}$ is a products of balls. The rest of \cref{subsectionAdmissibility} will be devoted to a rather technical derivation of explicit estimates needed only for the proof of \cref{theoremGinelliConvergenceAlmostEverywhere}.
	\begin{proposition}\label[proposition]{propositionAdmissibleMeasureEstimate}
		Let $d>1$. There is a constant $\eta=\eta(d,M)>0$ such that
		\begin{equation*}
		\mu\left(B_d(0,M)^d\setminus\mathcal{A}d^{(c)}(\delta)\right)\leq \eta\delta^{\frac{1}{d-1}}.
		\end{equation*}
	\end{proposition}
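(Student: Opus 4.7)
By orthogonal invariance $V^d(\mathcal{A}d^{(c)}(\delta))=\mathcal{A}d^{(Vc)}(\delta)$ together with invariance of Lebesgue measure and of $B_d(0,M)^d$ under $\textnormal{O}(d,\real)$, I may assume $(c)=(e)$ is the standard basis. By the union bound, it suffices to estimate for each fixed $i<p$ the measure of
\begin{equation*}
\mathcal{B}_i:=\bigl\{(b)\in B_d(0,M)^d\ :\ d\bigl(\overline{U}_i^{(b)},\overline{U}_i^{(e)}\bigr)^2>1-\delta^2\bigr\}
\end{equation*}
and sum the resulting bounds. Set $r:=\overline{d}_i=d_1+\dots+d_i\in\{1,\dots,d-1\}$, and let $B^{(i)}$ denote the upper-left $r\times r$ block of the matrix with columns $b_1,\dots,b_r$.

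The bridge between the geometric and the algebraic side is the factorization $|\det B^{(i)}|=|\det R|\cdot\prod_{j=1}^r\cos\theta_j$, where $B=(b_1\mid\dots\mid b_r)=QR$ is a QR decomposition and $\theta_1\leq\dots\leq\theta_r$ are the principal angles between $\overline{U}_i^{(b)}=\textnormal{range}(Q)$ and $\overline{U}_i^{(e)}$. Hadamard's inequality gives $|\det R|\leq M^r$, while $d(\overline{U}_i^{(b)},\overline{U}_i^{(e)})=\sin\theta_r$ converts the defining inequality of $\mathcal{B}_i$ into $\cos\theta_r<\delta$, hence $|\det B^{(i)}|\leq M^r\delta$. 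Since $B^{(i)}$ only involves the first $r$ coordinates of $b_1,\dots,b_r$, integrating the remaining coordinates (bounded by projections of $B_d(0,M)$) and the free vectors $b_{r+1},\dots,b_d$ yields
\begin{equation*}
\mu(\mathcal{B}_i)\leq K(M,d)\cdot\mu_{r^2}\bigl(\{B'\in B_r(0,M)^r\ :\ |\det B'|<M^r\delta\}\bigr).
\end{equation*}

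The main work is the measure estimate
\begin{equation*}
\mu_{r^2}\bigl(\{B'\in B_r(0,M)^r\ :\ |\det B'|<\epsilon\}\bigr)\leq C_r(M)\,\epsilon^{1/r},
\end{equation*}
which I plan to establish by induction on $r$. The base case $r=1$ is immediate. For the step, I fix $b_1',\dots,b_{r-1}'\in B_r(0,M)$ and write $\det(b_1',\dots,b_r')=V_{r-1}\,\langle b_r',n\rangle$ with $V_{r-1}$ the $(r{-}1)$-volume of $b_1',\dots,b_{r-1}'$ and $n$ a unit normal. If $V_{r-1}\geq\epsilon^{(r-1)/r}$ then $|\langle b_r',n\rangle|<\epsilon^{1/r}$, so the slab estimate on the $r$-dimensional ball gives $\lesssim M^{r-1}\epsilon^{1/r}$ for the inner integral. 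If instead $V_{r-1}<\epsilon^{(r-1)/r}$, then by Cauchy–Binet every $(r{-}1)\times(r{-}1)$ minor of $(b_1',\dots,b_{r-1}')$ has modulus $<\epsilon^{(r-1)/r}$; in particular the upper-left minor does, so the inductive hypothesis (with $\epsilon'=\epsilon^{(r-1)/r}$) bounds its measure by $C_{r-1}(M)\,\epsilon^{1/r}$, after absorbing a factor $(2M)^{r-1}$ from the $r$-th coordinate of each $b_j'$ and a factor for the free $b_r'$.

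Combining, $\mu(\mathcal{B}_i)\leq K(M,d)\,C_r(M)\,(M^r\delta)^{1/r}\leq \widetilde{K}(M,d)\,\delta^{1/r}$. Since $r\leq d-1$ and $\delta\leq 1$, we have $\delta^{1/r}\leq\delta^{1/(d-1)}$, and summing over the at most $p-1\leq d-1$ values of $i$ produces the claimed bound with an $\eta$ depending only on $d$ and $M$; for $\delta$ so large that $\eta\delta^{1/(d-1)}$ exceeds $\mu(B_d(0,M))^d$ the assertion is trivial, so this case is absorbed into the constant. The main obstacle is the inductive determinantal measure estimate: both the dichotomy on $V_{r-1}$ and the Cauchy–Binet reduction to a proper $(r{-}1)\times(r{-}1)$ subdeterminant are essential for closing the induction with the exponent $1/r$.
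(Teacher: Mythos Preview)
Your argument is correct and takes a genuinely different route from the paper's. The paper reduces to the nondegenerate case and proves two preparatory lemmas: an angle-multiplication lemma showing that if $f_i$ is $\tilde\delta$-far from $\textnormal{span}(f_1,\dots,f_{i-1},c_{i+1},\dots,c_d)$ and the filtration at level $i-1$ is already $\delta'$-admissible, then level $i$ is $\tilde\delta\delta'$-admissible; iterating this gives a sufficient condition for $\delta$-admissibility in terms of $d-1$ one-dimensional inequalities $|\langle w_i,b_i\rangle|\geq M\tilde\delta$ with $\tilde\delta=\delta^{1/(d-1)}$, where $w_i$ is the unit normal to $\textnormal{span}(b_1,\dots,b_{i-1},c_{i+1},\dots,c_d)$. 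A single slab estimate per index then yields $\eta\tilde\delta$. You instead convert the $i$-th admissibility condition, via the principal-angle identity $|\det B^{(i)}|=|\det R|\prod_j\cos\theta_j$ and Hadamard, into the determinantal inequality $|\det B^{(i)}|<M^r\delta$, and then prove the standalone estimate $\mu(\{|\det|<\epsilon\})\lesssim\epsilon^{1/r}$ by induction on $r$ using the $V_{r-1}$-dichotomy and Cauchy--Binet. Both approaches produce the exponent $1/(d-1)$ through an induction of length $d-1$, but the paper's induction climbs the filtration while yours reduces the matrix size. Your route yields a reusable determinantal measure lemma and handles arbitrary degeneracies without first passing to the nondegenerate case; the paper's route is more tightly adapted to the Gram--Schmidt structure of admissibility and keeps the constants rather explicit.
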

	Two lemmata on how to construct admissible tuples will guide us to the above proposition. Since admissible tuples for the nondegenerate case are admissible for all possible degenerate cases, it is enough to find an estimate for the nondegenerate case.
	\begin{lemma}\label[lemma]{lemmaAdmissibleConstruction1}
		Let $(f)$ be an ONB of $\real^d$. Fix $1<i<d$ and $0<\delta_1,\delta_2\leq 1$. If
		\begin{equation*}
		\left\|P_{\textnormal{span}(f_1,\dots,f_{i-1},c_{i+1},\dots,c_d)}f_i\right\|^2\leq 1-\delta_1^2\hspace{1em}\text{and}\hspace{1em}\left\|\overline{P}_{i-1}^{(f)}\left(I-\overline{P}_{i}^{(c)}\right)\right\|^2\leq 1-\delta_2^2,
		\end{equation*}
		then
		\begin{equation*}
		d\left(\overline{U}_{i}^{(f)},\overline{U}_{i}^{(c)}\right)^2\leq 1-(\delta_1\delta_2)^2.
		\end{equation*}
	\end{lemma}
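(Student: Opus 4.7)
The plan is to show, equivalently, that $\|\overline{P}_i^{(c)} x\|^2 \geq (\delta_1 \delta_2)^2$ for every unit vector $x \in \overline{U}_i^{(f)}$; since both subspaces have dimension $i$, items~1 and~4 of \cref{propositionDistance} give $d(\overline{U}_i^{(f)}, \overline{U}_i^{(c)})^2 = \|(I - \overline{P}_i^{(c)})\overline{P}_i^{(f)}\|^2 = 1 - \inf_{\|x\|=1,\, x \in \overline{U}_i^{(f)}} \|\overline{P}_i^{(c)} x\|^2$. I set $W_1 := \textnormal{span}(f_1, \dots, f_{i-1})$, $W_2 := \textnormal{span}(c_{i+1}, \dots, c_d)$, and $W := W_1 + W_2$, so $\overline{P}_{i-1}^{(f)} = P_{W_1}$ and $I - \overline{P}_i^{(c)} = P_{W_2}$. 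By \cref{propositionAngle}(6), hypothesis~2 reads $c_0(W_1, W_2)^2 \leq 1 - \delta_2^2$, which by \cref{propositionAngle}(3) forces $W_1 \cap W_2 = \{0\}$ (hence $\dim W = d - 1$) and yields $\|P_{W_2^\perp} v\|^2 \geq \delta_2^2 \|v\|^2$ for every $v \in W_1$, while hypothesis~1 rephrases as $\|P_{W^\perp} f_i\|^2 \geq \delta_1^2$.

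I decompose $f_i = u + w$ orthogonally with $u := P_{W^\perp} f_i$ (so $\|u\|^2 \geq \delta_1^2$) and $w := P_W f_i$; the relation $f_i \perp W_1$ forces $w \in W \cap W_1^\perp$. Every unit $x \in \overline{U}_i^{(f)}$ may be written $x = \alpha f_i + v = \alpha u + \alpha w + v$ with $v \in W_1$, $\alpha^2 + \|v\|^2 = 1$, and three mutually orthogonal summands. Since $W^\perp \subset W_2^\perp$ and $\dim W^\perp = 1$, the target subspace admits the orthogonal splitting $\overline{U}_i^{(c)} = W_2^\perp = W^\perp \oplus^\perp (W_2^\perp \cap W)$; noting that $P_{W_2^\perp}$ and $P_{W_2^\perp \cap W}$ agree on $W$ gives
\begin{equation*}
\|\overline{P}_i^{(c)} x\|^2 = \alpha^2 \|u\|^2 + \|P_{W_2^\perp}(\alpha w + v)\|^2.
\end{equation*}
Using the direct (but non-orthogonal) sum $W = W_1 \oplus W_2$, I write $\alpha w + v = s_1 + s_2$ with $s_1 \in W_1$, $s_2 \in W_2$; then $P_{W_2^\perp}(\alpha w + v) = P_{W_2^\perp} s_1$, and hypothesis~2 yields $\|P_{W_2^\perp} s_1\|^2 \geq \delta_2^2 \|s_1\|^2$. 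Writing $w = \tilde w + \tilde w_2$ in the same oblique decomposition one obtains $s_1 = v + \alpha \tilde w$; the relation $w \perp W_1$ implies $\tilde w = -P_{W_1} \tilde w_2$, and combining this with $\|P_{W_1}\tilde w_2\| \leq \sqrt{1-\delta_2^2}\,\|\tilde w_2\|$ and $\|\tilde w_2\|^2 = \|w\|^2 + \|\tilde w\|^2$ produces the key bound
\begin{equation*}
\|\tilde w\|^2 \leq \frac{(1 - \delta_1^2)(1 - \delta_2^2)}{\delta_2^2}.
\end{equation*}

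The final step is to minimize $\alpha^2 \|u\|^2 + \delta_2^2 \|v + \alpha \tilde w\|^2$ subject to $\alpha^2 + \|v\|^2 = 1$, $v \in W_1$. Minimizing first over $v$ gives the lower bound $\alpha^2 \|u\|^2 + \delta_2^2 (|\alpha| \|\tilde w\| - \sqrt{1 - \alpha^2})^2$, reducing the problem to a one-dimensional optimization in $\alpha \in [0, 1]$. I expect this algebraic step to be the main obstacle, since the bound is sharp (as verified in the case $d = 3$, $i = 2$, where the minimum equals $\|u\|^2 \delta_2^2$): setting $p := \|u\|^2$, $q := \delta_2^2$, $\rho := \|\tilde w\|$, the critical-point computation shows that the inequality $\alpha^2 p + q(|\alpha|\rho - \sqrt{1-\alpha^2})^2 \geq pq$, after verifying the positivity condition $p(1 - 2q) + q(1+\rho^2) \geq 0$ and squaring, reduces precisely to $\rho^2 \leq (1-p)(1-q)/q$, which is the key bound from the previous paragraph; combined with $p \geq \delta_1^2$, this delivers the desired estimate $\|\overline{P}_i^{(c)} x\|^2 \geq \delta_1^2 \delta_2^2$.
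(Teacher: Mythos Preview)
Your proof is correct and takes a genuinely different route from the paper's. The paper first reduces to the case $i=2$, $d=3$: it picks unit vectors $f_1'\in\textnormal{span}(f_1,\dots,f_{i-1})$ and $c_3'\in\textnormal{span}(c_{i+1},\dots,c_d)$ realizing the relevant operator norms, sets $f_2':=f_i$, and observes that the target inequality becomes $|\langle f_1',c_3'\rangle|^2+|\langle f_2',c_3'\rangle|^2\le 1-(\delta_1\delta_2)^2$ inside the three--dimensional span $\textnormal{span}(f_1',f_2',c_3')$; this 3D case is then dispatched by a four-line computation. You instead work in full dimension throughout: you use the oblique decomposition $W=W_1\oplus W_2$ to isolate the $W_1$-component of a generic unit vector in $\overline U_i^{(f)}$, and reduce to a one-parameter trigonometric minimization whose squared form collapses exactly to $\rho^2 q\le(1-p)(1-q)$. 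One small point: for that final reduction you need the sharp bound $\|\tilde w\|^2\le(1-p)(1-q)/q$ with $p=\|u\|^2$, which your derivation in fact yields (via $\|w\|^2=1-p$) \emph{before} you relax $1-p$ to $1-\delta_1^2$ in the displayed ``key bound''; make sure to quote the sharp version there. The paper's reduction is slicker and makes the underlying 3D geometry transparent; your approach is more computational but self-contained and explicitly identifies the inequality $\rho^2 q\le(1-p)(1-q)$ as the exact borderline.
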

	\begin{proof}
		First, we reduce the problem to the case $i=2$ and $d=3$: There are unit vectors $f_1'\in\textnormal{span}(f_1,\dots,f_{i-1})$ and $c_3'\in\textnormal{span}(c_{i+1},\dots,c_d)$ such that
		\begin{equation*}
		\left\|\overline{P}_{i}^{(f)}\left(I-\overline{P}_{i}^{(c)}\right)\right\|^2=\left\|\overline{P}_{i}^{(f)}c_3'\right\|^2=\left\|\overline{P}_{i-1}^{(f)}c_3'\right\|^2+|\langle f_i,c_3'\rangle |^2=|\langle f_1',c_3'\rangle |^2+|\langle f_2',c_3'\rangle |^2
		\end{equation*}
		with $f_2':=f_i$. Furthermore, the assumptions yield
		\begin{equation*}
		\left\|P_{\textnormal{span}(f_1',c_3')}f_2'\right\|^2\leq\left\|P_{\textnormal{span}(f_1,\dots,f_{i-1},c_{i+1},\dots,c_d)}f_i\right\|^2\leq 1-\delta_1^2
		\end{equation*}
		and
		\begin{equation*}
		|\langle f_1',c_3'\rangle |^2\leq\left\|\overline{P}_{i-1}^{(f)}c_3'\right\|^2\leq\left\|\overline{P}_{i-1}^{(f)}\left(I-\overline{P}_{i}^{(c)}\right)\right\|^2\leq 1-\delta_2^2.
		\end{equation*}
		In particular, $f_1'$, $f_2'$ and $c_3'$ are linearly independent. Thus, the problem reduces to finding the right estimate to
		\begin{equation*}
		d\left(\overline{U}_{2}^{(f')},\overline{U}_{2}^{(c')}\right)^2=\left\|\overline{P}_{2}^{(f')}c_3'\right\|^2=|\langle f_1',c_3'\rangle |^2+|\langle f_2',c_3'\rangle|^2
		\end{equation*}
		inside $\textnormal{span}(f_1',f_2',c_3')\cong\real^3$, where $(f')$ and $(c')$ are some ONBs of $\textnormal{span}(f_1',f_2',c_3')$ extending $(f_1',f_2')$ and $c_3'$.\par
		The case $i=2$ and $d=3$ can be shown by a short calculation. It holds
		\begin{align*}
		\left\|P_{\textnormal{span}(f_1',c_3')}f_2'\right\|^2&=|\langle f_1',f_2'\rangle|^2+\left|\left\langle\frac{c_3'-\langle f_1',c_3'\rangle f_1'}{\|c_3'-\langle f_1',c_3'\rangle f_1'\|},f_2'\right\rangle\right|^2\\
		&=\frac{|\langle c_3',f_2'\rangle |^2}{\|c_3'-\langle f_1',c_3'\rangle f_1'\|^2}\\
		&=\frac{|\langle c_3',f_2'\rangle |^2}{1-|\langle f_1',c_3'\rangle |^2}.
		\end{align*}
		Thus, by our assumptions:
		\begin{equation*}
		|\langle f_2',c_3'\rangle |^2=\left\|P_{\textnormal{span}(f_1',c_3')}f_2'\right\|^2(1-|\langle f_1',c_3'\rangle |^2)\leq (1-\delta_1^2)(1-|\langle f_1',c_3'\rangle |^2).
		\end{equation*}
		We estimate:
		\begin{align*}
		&|\langle f_1',c_3'\rangle |^2+|\langle f_2',c_3'\rangle |^2\\
		&\hspace{1em}\leq|\langle f_1',c_3'\rangle |^2+(1-\delta_1^2)(1-|\langle f_1',c_3'\rangle |^2)\\
		&\hspace{1em}=1-\delta_1^2+\delta_1^2\,|\langle f_1',c_3'\rangle |^2\\
		&\hspace{1em}\leq 1-\delta_1^2+\delta_1^2(1-\delta_2^2)\\
		&\hspace{1em}=1-(\delta_1\delta_2)^2.
		\end{align*}
	\end{proof}
	The previous lemma can be used to give a sufficient condition for a tuple to be $\delta$-admissible.
	\begin{lemma}\label[lemma]{lemmaAdmissibleConstruction2}
		If a basis $(b)$ satisfies
		\begin{equation*}
		\forall\, i<d:\, \left\|P_{\textnormal{span}(f_1,\dots,f_{i-1},c_{i+1},\dots,c_d)}f_i\right\|^2\leq 1-\left(\delta^{\frac{1}{d-1}}\right)^2,
		\end{equation*}
		where $(f):=\mathcal{GS}(b)$, then $(b)$ is $\delta$-admissible.
	\end{lemma}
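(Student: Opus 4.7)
The plan is to iterate \cref{lemmaAdmissibleConstruction1} by induction on $i$. Setting $\alpha := \delta^{1/(d-1)}$, the hypothesis of the lemma reads $\left\|P_{\textnormal{span}(f_1,\dots,f_{i-1},c_{i+1},\dots,c_d)}f_i\right\|^2\leq 1-\alpha^2$ for all $i<d$. Since $(f)=\mathcal{GS}(b)$ we have $\overline{U}_i^{(b)}=\overline{U}_i^{(f)}$ for each $i$, so it suffices to establish the inductive bound $d\left(\overline{U}_i^{(f)},\overline{U}_i^{(c)}\right)^2\leq 1-\alpha^{2i}$ for all $i<d$. This already yields $\delta$-admissibility, since $\alpha\leq 1$ forces $\alpha^{2i}\geq\alpha^{2(d-1)}=\delta^2$ whenever $i\leq d-1$.

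The base case $i=1$ follows from orthonormality of $(f)$:
\begin{equation*}
d\left(\overline{U}_1^{(f)},\overline{U}_1^{(c)}\right)^2=\left\|(I-P_{c_1})f_1\right\|^2=\left\|P_{\textnormal{span}(c_2,\dots,c_d)}f_1\right\|^2\leq 1-\alpha^2.
\end{equation*}
For the inductive step, assuming the bound at level $i-1$, I would apply \cref{lemmaAdmissibleConstruction1} with parameters $\delta_1:=\alpha$ and $\delta_2:=\alpha^{i-1}$. Its first hypothesis is exactly our assumption at index $i$. For its second hypothesis, the key observation is that $\overline{U}_{i-1}^{(c)}\subset\overline{U}_i^{(c)}$ yields $(I-\overline{P}_i^{(c)})=(I-\overline{P}_{i-1}^{(c)})(I-\overline{P}_i^{(c)})$, so
\begin{equation*}
\left\|\overline{P}_{i-1}^{(f)}\left(I-\overline{P}_i^{(c)}\right)\right\|\leq\left\|\overline{P}_{i-1}^{(f)}\left(I-\overline{P}_{i-1}^{(c)}\right)\right\|=d\left(\overline{U}_{i-1}^{(f)},\overline{U}_{i-1}^{(c)}\right)\leq\sqrt{1-\alpha^{2(i-1)}}.
\end{equation*}
\cref{lemmaAdmissibleConstruction1} then delivers $d\left(\overline{U}_i^{(f)},\overline{U}_i^{(c)}\right)^2\leq 1-(\alpha\cdot\alpha^{i-1})^2=1-\alpha^{2i}$, closing the induction.

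The only substantive move, and thus the main subtlety, is recognizing that the second hypothesis of \cref{lemmaAdmissibleConstruction1} can be fed by the distance at the previous level via absorbing $(I-\overline{P}_i^{(c)})$ into $(I-\overline{P}_{i-1}^{(c)})$. Everything else reduces to tracking exponents of $\alpha$, and the choice $\alpha=\delta^{1/(d-1)}$ is dictated precisely by the fact that the induction costs one factor of $\alpha$ at each of the $d-1$ levels.
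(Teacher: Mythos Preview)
Your proof is correct and follows essentially the same inductive argument as the paper: both establish $d\left(\overline{U}_i^{(f)},\overline{U}_i^{(c)}\right)^2\leq 1-\left(\delta^{i/(d-1)}\right)^2$ by induction on $i$, invoking \cref{lemmaAdmissibleConstruction1} at each step and feeding its second hypothesis with the level-$(i-1)$ bound via the same inequality $\left\|\overline{P}_{i-1}^{(f)}(I-\overline{P}_i^{(c)})\right\|\leq\left\|\overline{P}_{i-1}^{(f)}(I-\overline{P}_{i-1}^{(c)})\right\|$. Your abbreviation $\alpha=\delta^{1/(d-1)}$ and your explicit factorization $(I-\overline{P}_i^{(c)})=(I-\overline{P}_{i-1}^{(c)})(I-\overline{P}_i^{(c)})$ to justify that inequality are cosmetic clarifications, not a different route.
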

	\begin{proof}
		We prove the result by induction over $i$ showing that
		\begin{equation*}
		d\left(\overline{U}_{i}^{(b)},\overline{U}_{i}^{(c)}\right)^2=d\left(\overline{U}_{i}^{(f)},\overline{U}_{i}^{(c)}\right)^2\leq 1-\left(\delta^{\frac{i}{d-1}}\right)^2\leq 1-\delta^2.
		\end{equation*}
		For $i=1$, we have
		\begin{equation*}
		d\left(\overline{U}_{1}^{(f)},\overline{U}_{1}^{(c)}\right)^2=\left\|\left(I-\overline{P}_1^{(c)}\right)f_1\right\|^2=\left\|P_{\textnormal{span}(c_{2},\dots,c_d)}f_1\right\|^2\leq 1-\left(\delta^{\frac{1}{d-1}}\right)^2.
		\end{equation*}
		Let $1<i<d$ and assume the induction hypothesis is true for $i-1$, which implies that
		\begin{equation*}
		\left\|\overline{P}_{i-1}^{(f)}\left(I-\overline{P}_{i}^{(c)}\right)\right\|^2\leq\left\|\overline{P}_{i-1}^{(f)}\left(I-\overline{P}_{i-1}^{(c)}\right)\right\|^2=d\left(\overline{U}_{i-1}^{(f)},\overline{U}_{i-1}^{(c)}\right)^2\leq 1-\left(\delta^{\frac{i-1}{d-1}}\right)^2.
		\end{equation*}
		Simply apply \cref{lemmaAdmissibleConstruction1} to close the induction step.
	\end{proof}
	Now, we prove the proposition.
	\begin{proof}[Proof of \textnormal{\cref{propositionAdmissibleMeasureEstimate}}]
		Set $\tilde{\delta}:=\delta^{\frac{1}{d-1}}$ and let
		\begin{equation*}
		\mathcal{N}:=\left\{\left.(b)\in B_d(0,M)^d\ \right|\ \exists i:\, \det(b_1,\dots,b_i,c_{i+1},\dots,c_d)=0\right\}
		\end{equation*}
		be the set of all nonadmissible vector tuples inside $B_d(0,M)^d$. From \cref{propositionAdmissibleGeneric} we know that $\mathcal{N}$ has measure zero. On its complement we define a continuous mapping into the $d$-fold product of spheres:
		\begin{equation*}
		w:B_d(0,M)^d\setminus\mathcal{N}\to\left(S^{d-1}\right)^d
		\end{equation*}
		with components
		\begin{equation*}
		w_i(b_1,\dots,b_d):=\mathcal{GS}_d(b_1,\dots,b_{i-1},c_{i+1},\dots,c_d,c_i),
		\end{equation*}
		where $\mathcal{GS}_d$ is the last component of the Gram-Schmidt procedure. By construction $w_i=w_i(b_1,\dots,b_d)$ is the unique unit-vector orthogonal to
		\begin{equation*}
		\textnormal{span}(b_1,\dots,b_{i-1},c_{i+1},\dots,c_d)
		\end{equation*}
		with $\langle w_i,c_i\rangle>0$, and only depends on the first $i-1$ vectors of $(b)$. $w$ will help us to measure sets of admissible vectors.\par
		The Gram-Schmidt basis of $(b)$ is constructed by setting $f_i:=\frac{b_i'}{\|b_i'\|}$ with $b_i':=\left(I-\overline{P}_{i-1}^{(b)}\right)b_i$. Assuming $|\langle w_i,b_i\rangle|\geq M\tilde{\delta}$, we get
		\begin{align*}
		\left\|P_{\textnormal{span}(f_1,\dots,f_{i-1},c_{i+1},\dots,c_d)}f_i\right\|^2&=\left\|P_{\textnormal{span}(b_1,\dots,b_{i-1},c_{i+1},\dots,c_d)}f_i\right\|^2\\
		&=1-|\langle w_i,f_i\rangle|^2\\
		&=1-\frac{|\langle w_i,b_i'\rangle|^2}{\|b_i'\|^2}\\
		&=1-\frac{|\langle w_i,b_i\rangle|^2}{\|b_i'\|^2}\\
		&\leq 1-\frac{|\langle w_i,b_i\rangle|^2}{\|b_i\|^2}\\
		&\leq 1-\frac{|\langle w_i,b_i\rangle|^2}{M^2}\\
		&\leq 1-\tilde{\delta}^2.
		\end{align*}
		Hence, if $(b)\in B_d(0,M)^d\setminus\mathcal{N}$ satisfies
		\begin{equation*}
		\forall\, i<d:\, |\langle w_i,b_i\rangle |\geq M\tilde{\delta},
		\end{equation*}
		then $(b)$ is $\delta$-admissible by \cref{lemmaAdmissibleConstruction2}. In particular, the subset of all non-$\delta$-admissible tuples is contained in the subset of all $(b)$, which either do not fulfill the above condition or which are elements of the set of measure zero $\mathcal{N}$. Therefore, a measure-estimate on tuples not fulfilling the condition is enough for the claim:\\
		\begin{align*}
		&\mu\left(B_d(0,M)^d\setminus\mathcal{A}d^{(c)}(\delta)\right)\\
		&\hspace{1em}\leq\color{black}\mu\left(\left\{\left.(b)\in B_d(0,M)^d\setminus\mathcal{N}\ \right|\ \exists\,  i<d:\, |\langle w_i,b_i\rangle |<M\tilde{\delta}\right\}\right)\\
		&\hspace{1em}\leq\sum_{i<d}\mu\left(\left\{\left.(b)\in B_d(0,M)^d\setminus\mathcal{N}\ \right|\ |\langle w_i,b_i\rangle |<M\tilde{\delta}\right\}\right)\\
		&\hspace{1em}=\sum_{i<d}\mu\big(\big\{\left.(b)\in B_d(0,M)^d\ \right|\ \det(b_1,\dots,b_{i-1},c_i,\dots,c_d)\neq 0\text{ and }|\langle w_i,b_i\rangle |<M\tilde{\delta}\big\}\big)\\
		&\hspace{1em}=\sum_{i<d}(\mu(B_d(0,M)))^{d-i}\int_{\left\{\left.(b_1,\dots,b_{i-1})\in B_d(0,M)^{i-1}\ \right|\ \det(b_1,\dots,b_{i-1},c_i,\dots,c_d)\neq 0\right\}}\\
		&\hspace{3em}\int_{\left\{b_i\in B_d(0,M)\ :\ |\langle w_i,b_i\rangle |<M\tilde{\delta}\right\}}1\ db_i\ d(b_1,\dots,b_{i-1})\\
		&\hspace{1em}\underset{(\star)}{=}\sum_{i<d}(\mu(B_d(0,M)))^{d-i}\int_{\left\{\left.(b_1,\dots,b_{i-1})\in B_d(0,M)^{i-1}\ \right|\ \det(b_1,\dots,b_{i-1},c_i,\dots,c_d)\neq 0\right\}}\\
		&\hspace{3em}\int_{\left\{b_i\in B_d(0,M)\ :\ |\langle e_1,b_i\rangle |<M\tilde{\delta}\right\}}1\ db_i\ d(b_1,\dots,b_{i-1})\\
		&\hspace{1em}=\sum_{i<d}(\mu(B_d(0,M)))^{d-1}\mu\left(B_d(0,M)\cap\left(\left(-M\tilde{\delta},M\tilde{\delta}\right)\times\real^{d-1}\right)\right)\\
		&\hspace{1em}\leq(d-1)(\mu(B_d(0,M)))^{d-1}(2M)^d\tilde{\delta}.
		\end{align*}
		We used Fubini's theorem to measure components separately. In $(\star)$ we rotated $w_i$ to the first vector of the standard basis. Afterwards, we enlarged $B_d(0,M)\cap\left(\left(-M\tilde{\delta},M\tilde{\delta}\right)\times\real^{d-1}\right)$ to $\left(-M\tilde{\delta},M\tilde{\delta}\right)\times(-M,M)^{d-1}$ for a simple estimate.\par
		Now, setting $\eta:=(d-1)(\mu(B_d(0,M)))^{d-1}(2M)^d$ yields the desired estimate.
	\end{proof}
	A similar estimate will be necessary for non-$\delta$-admissible tuples inside the special domain.
	\begin{proposition}\label[proposition]{propositionAdmissibleExtMeasureEstimate}
		Let $d>1$. There is a constant $\eta=\eta(d,M)>0$ such that
		\begin{equation*}
		\mu\left(B(M)\setminus\mathcal{A}d_{\textnormal{ext}}^{(c)}(\delta)\right)\leq \eta\delta^{\frac{1}{d-1}},
		\end{equation*}
		where $B(M)$ is given by a product of balls of radius M inside the special domain:
		\begin{equation*}
		B_{d}(0,M)^{d_1}\times\dots\times B_{d_p}(0,M)^{d_p}\subset\left(U^{(c)}_1\oplus\dots\oplus U^{(c)}_p\right)^{d_1}\times\dots\times\left(U^{(c)}_p\right)^{d_p}.
		\end{equation*}
	\end{proposition}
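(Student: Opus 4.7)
The plan is to mirror the strategy of \cref{propositionAdmissibleExtGeneric} by reducing the extension problem for each block to a standard admissibility problem in a lower-dimensional space, and then invoking the measure estimate of \cref{propositionAdmissibleMeasureEstimate} block by block.

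First, I would decompose the complement: a tuple $(b) \in B(M)$ fails to lie in $\mathcal{A}d_{\textnormal{ext}}^{(c)}(\delta)$ precisely when there exists some index $i$ for which the block $(b_{i_1},\dots,b_{i_{d_i}})$ admits no $\delta$-admissible extension. A union bound over $i$, combined with Fubini to integrate out the other blocks, reduces the problem to estimating, for each fixed $i$, the Lebesgue measure of the set $S_i \subset B_{d^{(i)}}(0,M)^{d_i}$ of bad $i$-th blocks, where $d^{(i)} := d_i + \dots + d_p$.

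For each fixed $i$ I would translate to the reduced setting used in the proof of \cref{propositionAdmissibleExtGeneric}, identifying $U^{(c)}_i\oplus\dots\oplus U^{(c)}_p$ with $\real^{d^{(i)}}$ via $(c_{i_1},\dots,c_{p_{d_p}}) \leftrightarrow (e')$. The block $(b_{i_1},\dots,b_{i_{d_i}})$ then becomes the first segment of a potential tuple in this reduced space, and the reduction in \cref{propositionAdmissibleExtGeneric} shows that a $\delta$-admissible extension in $\real^d$ corresponds to a $\delta$-admissible extension in $\real^{d^{(i)}}$. If no such extension exists, then for every choice of remaining vectors in $B_{d^{(i)}}(0,M)^{d^{(i)}-d_i}$ the completed tuple in $\real^{d^{(i)}}$ fails to be $\delta$-admissible. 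Hence
\[
S_i \times B_{d^{(i)}}(0,M)^{d^{(i)}-d_i}\ \subset\ B_{d^{(i)}}(0,M)^{d^{(i)}}\setminus \mathcal{A}d^{(e')}(\delta),
\]
and \cref{propositionAdmissibleMeasureEstimate} applied in dimension $d^{(i)}>1$ (valid for $i<p$), together with Fubini, yields $\mu(S_i) \leq C_i\,\delta^{1/(d^{(i)}-1)}$. Since $d^{(i)} \leq d$ and $0<\delta\leq 1$, we have $\delta^{1/(d^{(i)}-1)} \leq \delta^{1/(d-1)}$, so each contribution is controlled by a constant multiple of $\delta^{1/(d-1)}$. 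For $i=p$ the reduced admissibility condition is vacuous, so $S_p$ consists only of linearly dependent tuples and has measure zero.

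Multiplying each $\mu(S_i)$ by the product of the measures of the remaining blocks and summing over $i$ then yields the claim with an explicit $\eta$ depending only on $d$, $M$, and the degeneracies. The main obstacle I anticipate is the bookkeeping in the reduction step, namely verifying that the extension property for the $i$-th block decouples from the other blocks and descends cleanly to the reduced space. Once this equivalence is in hand (it is essentially established in the proof of \cref{propositionAdmissibleExtGeneric}), the measure estimate follows directly from \cref{propositionAdmissibleMeasureEstimate} applied blockwise.
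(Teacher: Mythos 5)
Your proposal is essentially the paper's own argument: both reduce each block to the lower-dimensional setting of \cref{propositionAdmissibleExtGeneric}, deduce that a block with no $\delta$-admissible extension contributes, after taking a product with the remaining balls, a subset of $B_{d'}(0,M)^{d'}\setminus\mathcal{A}d^{(e')}(\delta)$, and then invoke \cref{propositionAdmissibleMeasureEstimate} blockwise. Your explicit handling of the exponent comparison $\delta^{1/(d^{(i)}-1)}\leq\delta^{1/(d-1)}$ and of the vacuous case $i=p$ spells out details the paper leaves implicit, but the route is the same.
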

	\begin{proof}
		The proof is similar to the one of \cref{propositionAdmissibleExtGeneric}. Again, it is enough to find such a bound for the set of all tuples in
		\begin{equation*}
		B_{d_i+\dots+d_p}(0,M)^{d_i}\subset\left(U^{(c)}_i\oplus\dots\oplus U^{(c)}_p\right)^{d_i}
		\end{equation*}
		that cannot be extended to a $\delta$-admissible tuple.\par
		Using the same identification as before, we reduce the problem to finding such an estimate for the set
		\begin{equation*}
		B_{d'}(0,M)^{d_1'}\setminus\left\{\left(b_{1_1}',\dots,b_{1_{d_1'}}'\right)\text{ has a $\delta$-admissible extension}\right\}.
		\end{equation*}
		\Cref{propositionAdmissibleMeasureEstimate} yields $\eta'$ only depending on $d'$ and $M$ with
		\begin{equation*}
		\mu\left(B_{d'}\left(0,M\right)^{d'}\setminus\mathcal{A}d^{(e')}(\delta)\right)\leq \eta'\delta^{\frac{1}{d'-1}}.
		\end{equation*}
		This implies
		\begin{align*}
		&\mu\left(B_{d'}(0,M)^{d_1'}\setminus\left\{\left(b_{1_1}',\dots,b_{1_{d_1'}}'\right)\text{ has a $\delta$-admissible extension}\right\}\right)\\
		&\hspace{1em}\leq \left(\frac{1}{\textnormal{vol}\left(B_{d'}(0,M)\right)}\right)^{d'-d_1'}\eta'\delta^{\frac{1}{d'-1}}.
		\end{align*}
		Finally, an estimate $\eta$ only depending on $M$ and $d$ is achieved by taking the maximum over estimates for all possible combinations of degeneracies.
	\end{proof}
	\bigskip
	
		\section{Ginelli's Algorithm}\label{sectionGinellisAlgorithm}
	In this section we define a minimalistic setting suitable for both the MET and Ginelli's algorithm.

	\subsection{Setting and Multiplicative Ergodic Theorem}\label{subsectionSetting}
	Since we want to cover as many applications for Ginelli's algorithm as possible, we do not specify a type of state space or system. Instead, we assume a non-empty set $\Omega=\left\{\theta_t\omega_0\ |\ t\in\mathbb{T}\right\}$ to be the abstract orbit of our state of interest $\omega_0$ respective to the flow $(\theta_t)_{t\in\mathbb{T}}$. Here, $\theta_t:\Omega\to\Omega$ represents the time-$t$-flow on our orbit. The flow should satisfy $\theta_0=\textnormal{id}_{\Omega}$ and $\theta_{s+t}=\theta_s\theta_t$. Remaining information of the linear model is encoded in a cocycle $\Phi(t,\omega)$ assigning a timestep $t$ and a state $\omega$ to the linear propagator on tangent space from $\omega$ to $\theta_t\omega$.
	\begin{definition}\label[definition]{definitionCocycle}
		A map $\Phi:\mathbb{T}\times\Omega\to\real^{d\times d}$ is called a \emph{(linear) cocycle (over $\theta$)} if
		\begin{enumerate}
			\item $\Phi(0,\omega)=\textnormal{id}$,
			\item $\Phi(s+t,\omega)=\Phi(s,\theta_t\omega)\Phi(t,\omega)$,
		\end{enumerate}
		for all $s,t\in\mathbb{T}$ and $\omega\in\Omega$.
	\end{definition}
	Since $\mathbb{T}$ is two-sided, every cocycle is pointwise invertible with inverse
	\begin{equation*}
	\Phi(t,\omega)^{-1}=\Phi(-t,\theta_t\omega).
	\end{equation*}\par
	The Multiplicative Ergodic Theorem of Oseledets \cite{Oseledets1968} not only gives us existence of CLVs, but will play a crucial role in our convergence proof. We state a deterministic version found in \cite{Arnold1998}. It assumes that changes during a short timestep do not matter on an exponential scale and, furthermore, that expansion rates of different volumes are well-defined and do not exceed the exponential scale.
	\begin{proposition}[Deterministic MET]\label[proposition]{propositionDeterministicMET}
		Let $\Phi$ be a cocycle satisfying
		\begin{equation*}
		\lambda\left(\sup_{s\in[0,1]\cap\mathbb{T}}\|\Phi(s,\theta_t\omega_0)^{\pm 1}\|\right)\leq 0
		\end{equation*}
		and assume that 
		\begin{equation*}
		\lim_{t\to\infty}\frac{1}{t}\log\|\wedge^i\Phi(t,\omega_0)\|\in\real\cup\{-\infty\}
		\end{equation*}
		exists for all orders of the wedge product of $\Phi(t,\omega_0)$. Then, there exists a Lyapunov spectrum with a corresponding filtration capturing subspaces of different growth rates:
		\begin{enumerate}
			\item The \emph{Lyapunov spectrum} consists of \emph{Lyapunov exponents} (LEs)
			\begin{equation*}
			\infty>\lambda_1>\dots>\lambda_p\geq-\infty,
			\end{equation*}
			which are the distinct limits of singular values, together with degeneracies $d_1+\dots+d_p=d$:
			\begin{equation*}
			\forall i:\, \forall\, k=1,\dots,d_i:\, \lambda_{i}=\lim_{t\to\infty}\frac{1}{t}\log\sigma_{i_k}\left(\Phi(t,\omega_0)\right).
			\end{equation*}
			\item There is a filtration
			\begin{equation*}
			\real^d=V_1\supset \dots\supset V_p\supset V_{p+1}:=\{0\}
			\end{equation*}
			given by subspaces
			\begin{equation*}
			V_i:=\left\{x\in\real^d\ \left|\ \lim_{t\to\infty}\frac{1}{t}\log\|\Phi(t,\omega_0)x\|\leq\lambda_i \right.\right\}.
			\end{equation*}
			Limits in the definition of $V_i$ exist for all $x\in\real^d$ and take values in $\{\lambda_1,\dots,\lambda_p\}$. Moreover, it holds
			\begin{equation*}
			\dim V_i-\dim V_{i+1}=d_i.
			\end{equation*}
		\end{enumerate}
	\end{proposition}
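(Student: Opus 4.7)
The plan is to derive the statement from two classical identities: the singular-value formula $\|\wedge^i A\|=\sigma_1(A)\cdots\sigma_i(A)$, and the SVD-based expansion of $\|\Phi(t,\omega_0)x\|^2$. The first yields the Lyapunov spectrum, the second the filtration.

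First I would read off individual singular-value asymptotics. Setting
\begin{equation*}
\mu_i := \lim_{t\to\infty}\frac{1}{t}\log\|\wedge^i\Phi(t,\omega_0)\|=\lim_{t\to\infty}\frac{1}{t}\log\prod_{j\leq i}\sigma_{j}(\Phi(t,\omega_0))
\end{equation*}
(with $\mu_0:=0$), the telescoping differences $\mu_i-\mu_{i-1}$ deliver the limit of $\frac{1}{t}\log\sigma_i(\Phi(t,\omega_0))$ for each $i=1,\dots,d$. Sorting these $d$ numbers in decreasing order and grouping equal entries produces distinct values $\lambda_1>\dots>\lambda_p$ together with multiplicities $d_1+\dots+d_p=d$, proving part 1. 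The subexponential short-time bound $\lambda(\sup_{s\in[0,1]\cap\mathbb{T}}\|\Phi(s,\theta_t\omega_0)^{\pm 1}\|)\leq 0$ is exactly what is needed to pass from integer sampling of the cocycle to continuous $t$ without changing exponential rates, via the cocycle identity $\Phi(t,\omega_0)=\Phi(t-\lfloor t\rfloor,\theta_{\lfloor t\rfloor}\omega_0)\Phi(\lfloor t\rfloor,\omega_0)$.

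For part 2, I would define $\Lambda(x):=\limsup_{t\to\infty}\frac{1}{t}\log\|\Phi(t,\omega_0)x\|$ for $x\neq 0$ (allowing $-\infty$) and set $V_i:=\{x\in\real^d:\Lambda(x)\leq\lambda_i\}$. Linearity of $V_i$ follows from rule 6 of \cref{propositionLyapunovIndexProperties}, the nesting $\real^d=V_1\supset\dots\supset V_p\supset\{0\}$ from the ordering of the $\lambda_i$, and $V_1=\real^d$ from $\|\Phi(t,\omega_0)x\|\leq\sigma_1(t)\|x\|$. To show that $\Lambda(x)$ only takes the values $\lambda_1,\dots,\lambda_p$ and is in fact a genuine limit, I would expand $x$ in the right singular basis of $\Phi(t,\omega_0)$ to write
\begin{equation*}
\|\Phi(t,\omega_0)x\|^2=\sum_{j,k}\sigma_{j_k}(t)^2\,|\langle x,v_{j_k}(t)\rangle|^2
\end{equation*}
and argue that the leading non-negligible block dictates the exponential rate. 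Because the $v_{j_k}(t)$ depend on $t$, the argument has to be packaged via the admissibility machinery of \cref{subsectionAdmissibility}, which is precisely the correlation-with-singular-directions notion that survives in the limit.

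The heart of the matter, and the step I expect to be the main obstacle, is the dimension identity $\dim V_i-\dim V_{i+1}=d_i$. The easy bound $\dim V_i-\dim V_{i+1}\leq d_i$ is a codimension count: a subspace of codimension $<d-d_i$ meets $\mathrm{span}(v_{(i+1)_1}(t),\dots,v_{p_{d_p}}(t))$ nontrivially along a sequence $t_n\to\infty$, and any accumulation direction produces a vector with $\Lambda(x)\leq\lambda_{i+1}$. The reverse inequality requires the full wedge-product hypothesis: existence of every $\mu_i$, not just $\mu_1$, prevents adjacent singular-value groups from crossing infinitely often on an exponential scale, and this is what guarantees that a $d_i$-dimensional family of directions realizes the exponent $\lambda_i$. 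A convenient way to assemble this is to apply the same construction to the backward cocycle $\Phi(-t,\theta_t\omega_0)$, whose own filtration is complementary to $V_{i+1}$ inside $V_i$ with matching dimensions, and intersect. Since the paper cites this deterministic MET from Arnold's book, I would at this stage simply invoke Theorem 3.4.1 there rather than reprove the dimensional equality from scratch.
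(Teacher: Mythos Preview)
The paper does not give a proof of this proposition: it is stated as a deterministic version of Oseledets' MET taken from Arnold's book \cite{Arnold1998}, and no argument is supplied. Your closing move---invoking Arnold's Theorem rather than reproving the dimensional identity---is therefore exactly the paper's own treatment, so your proposal is consistent with it.

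A small remark on the sketch you do provide: the telescoping step $\mu_i-\mu_{i-1}=\lim_{t\to\infty}\frac{1}{t}\log\sigma_i(\Phi(t,\omega_0))$ is fine when both $\mu_{i-1}$ and $\mu_i$ are finite, but needs a separate word when $\mu_{i-1}=-\infty$ (one argues directly that all remaining singular values then have exponent $-\infty$). Also, your idea to use the backward cocycle $\Phi(-t,\theta_t\omega_0)$ to get the complementary dimension count is not quite how Arnold does it; his route is through the limit operator $\lim_{t\to\infty}(\Phi(t,\omega_0)^*\Phi(t,\omega_0))^{1/(2t)}$, whose eigenspace decomposition yields the filtration with the correct dimensions in one stroke. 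Your suggestion to use the admissibility machinery of \cref{subsectionAdmissibility} is logically available (that section precedes the MET in the paper) but is not needed for the MET itself---it is a tool the paper develops for the Ginelli convergence proof, not for this foundational result.
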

	The proposition only requires one-sided time and an invertible cocycle to provide the Lyapunov spectrum and filtration at state $\omega_0$ of the orbit. However, since we assumed two-sided time, we immediately get the existence of these quantities for all states along the orbit.
	\begin{corollary}\label[corollary]{corollaryMETWholeOrbit}
		In the setting of \textnormal{\cref{propositionDeterministicMET}} Lyapunov spectrum and filtration are defined for all $\omega\in\Omega$. Furthermore, $p(\omega)$, $\lambda_i(\omega)$ and $d_i(\omega)$ are independent of $\omega$, and the filtration changes in a covariant way:
		\begin{equation*}
		\Phi(t,\omega)V_i(\omega)=V_i(\theta_t\omega).
		\end{equation*}
	\end{corollary}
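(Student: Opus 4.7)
The plan is to transfer everything known at $\omega_0$ to an arbitrary state $\omega=\theta_s\omega_0$ on the orbit by means of the cocycle identity, using that two-sided time makes $\Phi(s,\omega_0)$ invertible. Concretely, the key algebraic relation to exploit is
\begin{equation*}
\Phi(t,\theta_s\omega_0)=\Phi(t+s,\omega_0)\,\Phi(s,\omega_0)^{-1},
\end{equation*}
obtained from property 2 of \cref{definitionCocycle}.

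First, I would check that the hypotheses of \cref{propositionDeterministicMET} carry over from $\omega_0$ to $\theta_s\omega_0$. The one-step boundedness $\lambda(\sup_{s'\in[0,1]\cap\mathbb{T}}\|\Phi(s',\theta_t\cdot\theta_s\omega_0)^{\pm 1}\|)\leq 0$ is just a time-shift of the assumption at $\omega_0$, hence unchanged by \cref{propositionLyapunovIndexProperties}. For the wedge products, applying $\wedge^i$ to the identity above gives $\wedge^i\Phi(t,\theta_s\omega_0)=\wedge^i\Phi(t+s,\omega_0)\cdot\wedge^i\Phi(s,\omega_0)^{-1}$, so $\|\wedge^i\Phi(t,\theta_s\omega_0)\|$ differs from $\|\wedge^i\Phi(t+s,\omega_0)\|$ only by a fixed multiplicative constant. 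Consequently $\lim_{t\to\infty}\frac{1}{t}\log\|\wedge^i\Phi(t,\theta_s\omega_0)\|$ exists and equals the corresponding limit at $\omega_0$. Hence \cref{propositionDeterministicMET} applies at every $\omega\in\Omega$.

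Next, I would show $p$, $\lambda_i$, and $d_i$ are $\omega$-independent. Writing $A=\Phi(t+s,\omega_0)$ and $B=\Phi(s,\omega_0)^{-1}$, the submultiplicative bounds $\sigma_{i_k}(AB)\leq \sigma_{i_k}(A)\|B\|$ together with $\sigma_{i_k}(A)=\sigma_{i_k}(ABB^{-1})\leq\sigma_{i_k}(AB)\|B^{-1}\|$ sandwich the singular values of $\Phi(t,\theta_s\omega_0)$ between $\sigma_{i_k}(\Phi(t+s,\omega_0))$ times the $s$-dependent but $t$-independent factors $\|\Phi(s,\omega_0)\|^{\pm 1}$ and $\|\Phi(s,\omega_0)^{-1}\|^{\pm 1}$. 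Dividing $\log$ by $t$ and sending $t\to\infty$ kills these factors and reproduces $\lambda_i$ together with the same degeneracies.

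Finally, for covariance, I would use the characterization of $V_i$ through growth rates. For $y\in\real^d$, setting $x:=\Phi(s,\omega_0)^{-1}y$,
\begin{equation*}
\lim_{t\to\infty}\frac{1}{t}\log\|\Phi(t,\theta_s\omega_0)\,y\|
=\lim_{t\to\infty}\frac{1}{t}\log\|\Phi(t+s,\omega_0)\,x\|
=\lim_{t'\to\infty}\frac{1}{t'}\log\|\Phi(t',\omega_0)\,x\|,
\end{equation*}
so $y\in V_i(\theta_s\omega_0)$ iff $x\in V_i(\omega_0)$, i.e.\ $V_i(\theta_s\omega_0)=\Phi(s,\omega_0)\,V_i(\omega_0)$. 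Iterating this identity and using the cocycle relation yields $\Phi(t,\omega)V_i(\omega)=V_i(\theta_t\omega)$ for arbitrary $\omega\in\Omega$ and $t\in\mathbb{T}$. The main (though still routine) obstacle is the bookkeeping for the singular value sandwich and the verification that the $s$-dependent prefactors vanish on the Lyapunov scale; beyond that, the proof is a direct consequence of the cocycle identity and the definitions in \cref{propositionDeterministicMET}.
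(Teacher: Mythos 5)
Your proposal is correct and follows essentially the same route as the paper: verify the two MET hypotheses at $\theta_s\omega_0$ via the cocycle identity, observe the $s$-dependent prefactors are $t$-independent and thus vanish on the Lyapunov scale, and read off covariance of the filtration directly from the growth-rate definition of $V_i$. The only cosmetic differences are that the paper extracts the invariance of $\lambda_i,d_i$ from the wedge-norm sandwich via $\|\wedge^i A\|=\sigma_1(A)\cdots\sigma_i(A)$, whereas you run a direct singular-value sandwich $\sigma_{i_k}(AB)\leq\sigma_{i_k}(A)\|B\|$ and its inverse, and that your phrase ``differs only by a fixed multiplicative constant'' should read ``is bounded above and below by $t$-independent constants'' --- which is what your subsequent estimates actually use.
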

	\begin{proof}
		The first assumption of \cref{propositionDeterministicMET} is trivially satisfied if we replace $\omega_0$ by $\omega=\theta_u\omega_0$. To prove the second assumption, we use the following properties of the wedge product, which can be found in \cite{Arnold1998}:
		\begin{enumerate}
			\item $\|\wedge^iA\|=\sigma_1(A)\dots\sigma_i(A)$,
			\item $\|\wedge^i(AB)\|\leq\|\wedge^iA\|\, \|\wedge^iB\|$,
		\end{enumerate}
		for $A,B\in\real^{d\times d}$. Now, the existence of
		\begin{equation*}
		\lim_{t\to\infty}\frac{1}{t}\log\|\wedge^i\Phi(t,\theta_u\omega_0)\|<\infty
		\end{equation*}
		follows due to the cocycle property:
		\begin{align*}
		&\left(\frac{t+u}{t}\right)\left(\frac{1}{t+u}\log\|\wedge^i\Phi(t+u,\omega_0)\|\right)-\frac{1}{t}\log\|\wedge^i\Phi(u,\omega_0)\|\\
		&\hspace{1em}\leq\frac{1}{t}\log\|\wedge^i\Phi(t,\theta_u\omega_0)\|\\
		&\hspace{1em}\leq\left(\frac{t+u}{t}\right)\left(\frac{1}{t+u}\log\|\wedge^i\Phi(t+u,\omega_0)\|\right)+\frac{1}{t}\log\|\wedge^i\Phi(-u,\theta_u\omega_0)\|.
		\end{align*}
		Thus, the proposition gives us the existence of a Lyapunov spectrum and filtration at state $\omega=\theta_u\omega_0$. In particular, the above shows that limits of singular values for $\omega$ and $\omega_0$ coincide on an exponential scale. Hence, the Lyapunov exponents and their multiplicities are the same for $\omega_0$ and $\omega$. Finally, the identity for filtrations spaces follows from the definition.
	\end{proof}
	Similar statements can be derived for the time-reversed cocycle $\Phi^-(t,\omega):=\Phi(-t,\omega)$ over the time-reversed flow $\theta^-_t:=\theta_{-t}$. We denote its Lyapunov spectrum by $(\lambda_{i}^-,d_i^-)_{i=1,\dots,p^-}$ and the corresponding filtration spaces by $V_i^-(\omega)$.\par
	In order to define a covariant splitting of the tangent space that captures asymptotic growth rates in both forward and backward time, we require additional assumptions on Lyapunov spectra and associated splittings of $\Phi$ and $\Phi^-$:
	\begin{enumerate}
		\item $p=p^-$, $d_i^-=d_{p+1-i}$ and $\lambda_{i}^-=-\lambda_{p+1-i}$,
		\item $V_{i+1}(\omega_0)\cap V_{p+1-i}^-(\omega_0)=\{0\}$.
	\end{enumerate}
	A direct consequence is the finiteness of LEs. For convenience sake, we set $\lambda_0:=\infty$ and $\lambda_{p+1}:=-\infty$.
	
	\begin{proposition}\label[proposition]{propositionOseledetsSpaces}
		Assuming the above relations between the Lyapunov spectra of $\Phi$ and $\Phi^-$, there exists a splitting $\real^d=E_1(\omega)\oplus\dots\oplus E_p(\omega)$ of the tangent space into so-called \emph{Oseledets spaces}
		\begin{equation}\label{equationOseledetsSpacesIntersection}
		E_i(\omega):=V_i(\omega)\cap V_{p+1-i}^-(\omega).
		\end{equation}
		Furthermore, Oseledets spaces can be characterized via
		\begin{equation}\label{equationOseledetsSpacesAsymptotics}
		x\in E_i(\omega)\setminus\{0\}\iff \lim_{t\to\pm\infty}\frac{1}{|t|}\log\|\Phi(t,\omega)x\|=\pm\lambda_i,
		\end{equation}
		are covariant 
		\begin{equation*}
		\Phi(t,\omega)E_i(\omega)=E_i(\theta_t\omega),
		\end{equation*}
		and satisfy $\dim E_i(\omega)=d_i$.
	\end{proposition}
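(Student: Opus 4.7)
My plan is to reduce everything to linear algebra after first propagating the transversality hypothesis along the orbit. Proposition~\ref{propositionDeterministicMET} and Corollary~\ref{corollaryMETWholeOrbit} supply all the analytic content; the remaining work is a dimension count plus the observation that invertible linear maps commute with intersections.

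First I would extend the hypothesis $V_{i+1}(\omega_0)\cap V_{p+1-i}^-(\omega_0)=\{0\}$ to every $\omega=\theta_t\omega_0\in\Omega$. Applying Corollary~\ref{corollaryMETWholeOrbit} to $\Phi$ gives $\Phi(t,\omega_0)V_i(\omega_0)=V_i(\omega)$; applying the same corollary to the time-reversed pair $(\theta^-,\Phi^-)$ yields $\Phi^-(s,\omega_0)V_j^-(\omega_0)=V_j^-(\theta_{-s}\omega_0)$, which under $s\mapsto -t$ together with $\Phi^-(-t,\omega_0)=\Phi(t,\omega_0)$ becomes $\Phi(t,\omega_0)V_j^-(\omega_0)=V_j^-(\omega)$. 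Since $\Phi(t,\omega_0)\in\textnormal{Gl}(d,\real)$ is an isomorphism, it preserves direct sums and trivial intersections, so the transversality holds at every $\omega$.

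Next I would count dimensions. From $\dim V_i-\dim V_{i+1}=d_i$ I get $\dim V_i=d_i+\dots+d_p$, and using $d_j^-=d_{p+1-j}$ the analogous identity for the backward filtration gives $\dim V_{p+1-i}^-=d_1+\dots+d_i$. Hence $\dim V_{i+1}+\dim V_{p+1-i}^-=d$, and combined with the (now global) transversality this forces $V_{i+1}\oplus V_{p+1-i}^-=\real^d$. The containment $V_i\supset V_{i+1}$ yields $V_i+V_{p+1-i}^-=\real^d$ as well, so
\begin{equation*}
\dim E_i(\omega)=\dim V_i(\omega)+\dim V_{p+1-i}^-(\omega)-d=d_i.
\end{equation*}
The same computation shows $E_i\cap V_{i+1}\subset V_{i+1}\cap V_{p+1-i}^-=\{0\}$, so $V_i=V_{i+1}\oplus E_i$, and iterating down from $V_1=\real^d$ produces $\real^d=E_1(\omega)\oplus\dots\oplus E_p(\omega)$.

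For the asymptotic characterisation \eqref{equationOseledetsSpacesAsymptotics} I take $x\in E_i(\omega)\setminus\{0\}$. Since $x\in V_i(\omega)$, Proposition~\ref{propositionDeterministicMET} guarantees the forward limit exists, lies in $\{\lambda_1,\dots,\lambda_p\}$, and is at most $\lambda_i$; were it at most $\lambda_{i+1}$, then $x\in V_{i+1}$, forcing $x\in V_{i+1}\cap V_{p+1-i}^-=\{0\}$. So the forward limit equals $\lambda_i$. The same argument for $\Phi^-$, together with $\lambda_{p+1-i}^-=-\lambda_i$, yields the backward limit $-\lambda_i$. Conversely, the two-sided asymptotics place $x$ into both $V_i$ and $V_{p+1-i}^-$, hence into $E_i$. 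Covariance follows immediately from Step~1, since $\Phi(t,\omega)$ commutes with intersections:
\begin{equation*}
\Phi(t,\omega)E_i(\omega)=\Phi(t,\omega)V_i(\omega)\cap\Phi(t,\omega)V_{p+1-i}^-(\omega)=V_i(\theta_t\omega)\cap V_{p+1-i}^-(\theta_t\omega)=E_i(\theta_t\omega).
\end{equation*}
I do not expect a genuine obstacle; the only delicate point is bookkeeping the index reflection $i\mapsto p+1-i$ consistently, which is exactly what the compatibility hypotheses between the forward and backward Lyapunov spectra are designed to absorb.
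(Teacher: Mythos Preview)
Your argument is correct and is precisely the ``purely algebraic'' route the paper has in mind when it defers to Arnold's book: propagate the transversality via covariance of both filtrations, use the dimension identities $\dim V_i=d_i+\dots+d_p$ and $\dim V_{p+1-i}^-=d_1+\dots+d_i$ to obtain $V_{i+1}\oplus V_{p+1-i}^-=\real^d$, deduce $\dim E_i=d_i$ and $V_i=E_i\oplus V_{i+1}$, and read off the asymptotic characterisation from the filtration definitions. There is nothing to add.
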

	\begin{proof}
		The proof is purely algebraic and can be found along the lines of the proof of the MET for two-sided time in \cite{Arnold1998}.
	\end{proof}
	
	\begin{figure}[tbhp]
		\centering
		\begin{tikzpicture}[z=10,scale=0.1,>=stealth]			
		\draw[-] (xyz cs:x=10,z=0) -- (xyz cs:x=10,z=86);
		\draw[-] (xyz cs:x=10,z=92) -- (xyz cs:x=10,z=100);
		\draw[-] (xyz cs:y=10,z=0) -- (xyz cs:y=10,z=100);
		\draw[-] (xyz cs:x=10,z=100) -- (xyz cs:z=100);
		\draw[-] (xyz cs:y=10,z=100) -- (xyz cs:y=4.5,z=100);
		\draw[-] (xyz cs:y=1.5,z=100) -- (xyz cs:z=100);
		
		\draw[-] (xyz cs:x=-5,z=0) -- (xyz cs:x=-5,z=14.2);
		\draw[-] (xyz cs:y=-5,z=0) -- (xyz cs:y=-5,z=11.5);
		\draw[dashed] (xyz cs:x=-5,z=14.2) -- (xyz cs:x=-5,z=100);
		\draw[dashed] (xyz cs:y=-5,z=16) -- (xyz cs:y=-5,z=98);
		\draw[dashed] (xyz cs:x=-5,z=100) -- (xyz cs:z=100);
		\draw[dashed] (xyz cs:y=-3.5,z=100) -- (xyz cs:z=100);
		
		\draw[->] (xyz cs:z=0) -- (xyz cs:z=100) node[above] {$\theta_t\omega_0$};
		
		\draw[color=red,->] plot[smooth,domain=0:31] (\x,{1+0.2*exp(0.12*\x)+\x}) node[left] {$\sim e^{t\lambda_1}$};
		
		\draw[color=blue,->] plot[smooth,domain=0:31] ({8*exp(-0.1*\x)+\x},\x) node[right] {$\sim e^{t\lambda_2}$};
		
		\draw[-] (xyz cs:x=-5) -- (xyz cs:x=1.5);
		\draw[-] (xyz cs:x=6.5) -- (xyz cs:x=10) node[below] {$E_2$};
		\draw[-] (xyz cs:y=-5) -- (xyz cs:y=10) node[left] {$E_1$};
		\node at (-20,0) { };
		\node at (50,0) { };
		
		\node[fill,circle,inner sep=1.5pt,label={right:$\omega_0$}] at (0,0) {};
		\end{tikzpicture}
		\caption{diagonal cocycle for dimension $2$}
		\label{figureOseledetsSpaces}
	\end{figure}
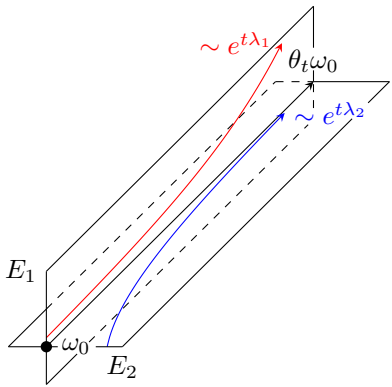
	
	For random dynamical systems satisfying a particular integrability condition, it is shown in \cite{Arnold1998} that the cocycle along almost all orbits of the system admits an Oseledets splitting. Moreover, in an ergodic setting the Lyapunov spectrum coincides for almost all orbits. Therefore, in applications it is often assumed that the underlying system is ergodic at least near an interesting structure.\footnote{See the concept of SRB-measures for attractors \cite{Bowen1975}.} Via CLVs one hopes to better understand the local flow around that structure.
	\begin{definition}\label[definition]{definitionCLVs}
		Normalized basis vectors, which are covariant and chosen subject to the Oseledets splitting for each $\omega\in\Omega$, are called \emph{covariant Lyapunov vectors (CLVs)}.
	\end{definition}
	CLVs represent directions of different asymptotic growth rates\footnote{Aside from asymptotic growth, the angle between CLVs can be used as a measure of hyperbolicity (see, e.g., \cite{Conti2017,Saiki2010,Xu2016,Yang2009}).} by \cref{equationOseledetsSpacesAsymptotics}. However, they are uniquely defined (up to sign) only for nondegenerate Lyapunov spectra.

	\subsection{The Algorithm}\label{subsectionTheAlgorithm}
	The Ginelli algorithm \cite{Ginelli2013, Ginelli2007} computes Oseledets spaces (or CLVs) for a given cocycle by using its asymptotic characterization \cref{equationOseledetsSpacesAsymptotics}. The main idea is that each vector with a nonzero $E_1$-part will approach $E_1$ asymptotically, since its $E_1$-component has the largest exponential growth rate. More abstractly, almost all $(d_1+\dots+d_i)$-dimensional subspaces will align with $E_1\oplus\dots\oplus E_i$, the fastest expanding (or slowest contracting) subspace of the corresponding dimension, in forward time. Reversing time, we are able to extract the slowest expanding (or fastest contracting) subspaces. In particular, almost all $d_i$-dimensional subspaces of $E_1\oplus \dots\oplus E_i$ will align with $E_i$ in backward time.\par
	Taking these traits into consideration, the abstract formalism of Ginelli's algorithm is as follows:\\
	
	\textbf{Ginelli Algorithm (analytical kernel)\footnote{The intended implementation of Ginelli's algorithm has a few more details (see \cite{Ginelli2013, Ginelli2007}). To avoid that all vectors collapse onto the first Oseledets space in forward time, the propagated vectors are frequently orthonormalized via a $QR$-decomposition. Analytically, however, orthonormalizations do not change the filtration of subspaces. Hence, the resulting subspace approximations are the same independent of how often the vectors were corrected.\\
	During phase 1.2 the $R$ matrices are stored and later reused in phase 2. By expressing $(b')$ as coefficients $(\alpha)$ with respect to forward propagated vectors appearing as columns of the $Q$ matrices, it suffices to apply inverses of the $R$ matrices to the coefficient matrix given by $(\alpha)$. In-between propagation steps one normalizes the columns of $(\alpha)$. Similar to the forward phase, propagated subspaces are not changed by the added details. Thus, analytical and numerical approximations coincide if computed with exact precision.}}\\
	\begin{enumerate}
		\item[1.1.] Randomly choose a basis $(b)$ of the tangent space at a past state $\theta_{-t_1}\omega_0$ and propagate it forward until $\omega_0$. If the propagation time $t_1$ is chosen large enough, we expect $\Phi(t_1,\theta_{-t_1}\omega_0)\overline{U}_i^{(b)}=\overline{U}_i^{(\Phi(t_1,\theta_{-t_1}\omega_0)b)}$ to be a good approximation to $E_1(\omega_0)\oplus\dots\oplus E_i(\omega_0)$.
		\item[1.2.] Continue propagating forward until a state $\theta_{t_2}\omega_0$ is reached. This state should be far enough in the future, so that we have a sufficiently good approximation to $E_1\oplus\dots\oplus E_i$ on a long enough timeframe for the second phase.
		\item[2.] For each $i$, randomly choose $d_i$ vectors $b_{i_1}',\dots,b_{i_{d_i}}'$ in $\overline{U}_i^{(\Phi(t_1+t_2,\theta_{-t_1}\omega_0)b)}\approx E_1(\theta_{t_2}\omega_0)\oplus\dots\oplus E_i(\theta_{t_2}\omega_0)$ and propagate them backward until $\omega_0$. The evolved subspace, i.e. $U_i^{(\Phi(-t_2,\theta_{t_2}\omega_0)b')}$, is our approximation to $E_i(\omega_0)$. 
	\end{enumerate}
	Since we propagate vectors forward, we call steps 1.1 and 1.2 \emph{forward phase}, and by the same reasoning step 2 is called \emph{backward phase}.
	
	\begin{figure}[tbhp]
		\centering
		\begin{tikzpicture}[>=stealth]
		\draw[->] (0,0) -- (10,0);
		\draw[<-] (5,-1) -- (10,-1);
		\draw [->] (9,0) arc [start angle=90, end angle=-90, radius=0.5];
		\draw[->] (2.1,0.3) -- (3,0.3);
		\draw[->] (6.4,0.3) -- (7.3,0.3);
		\draw[<-] (6.95,-1.3) -- (7.85,-1.3);
		\draw (1,0.1) -- (1,-0.1);
		\draw (5,0.1) -- (5,-0.1);
		\draw (5,-0.9) -- (5,-1.1);
		\draw (9,0.1) -- (9,-0.1);
		\draw (9,-0.9) -- (9,-1.1);
		\draw (2.55,0.7) node {$\Phi(t_1,\theta_{-t_1}\omega_0)$};
		\draw (6.85,0.7) node {$\Phi(t_2,\omega_0)$};
		\draw (7.4,-1.7) node {$\Phi(-t_2,\theta_{t_2}\omega_0)$};
		\draw (1,-0.5) node {$\theta_{-t_1}\omega_0$};
		\draw (5,-0.5) node {$\omega_0$};
		\draw (9,-0.5) node {$\theta_{t_2}\omega_0$};
		\draw (1,0.3) node {$(b)$};
		\draw (5,0.3) node {$\left(\Phi(t_1,\theta_{-t_1}\omega_0)b\right)$};
		\draw (9,0.3) node {$\left(\Phi(t_1+t_2,\theta_{-t_1}\omega_0)b\right)$};
		\draw (9,-1.3) node {$(b')$};
		\draw (5,-1.3) node {$\left(\Phi(-t_2,\theta_{t_2}\omega_0)b'\right)$};
		\draw (5.2,1.5) node {\Large \textbf{forward phase}};
		\draw (5.2,-2.5) node {\Large \textbf{backward phase}};
		\end{tikzpicture}
		\caption{schematic picture of the Ginelli algorithm}
		\label{figureGinelli}
	\end{figure}
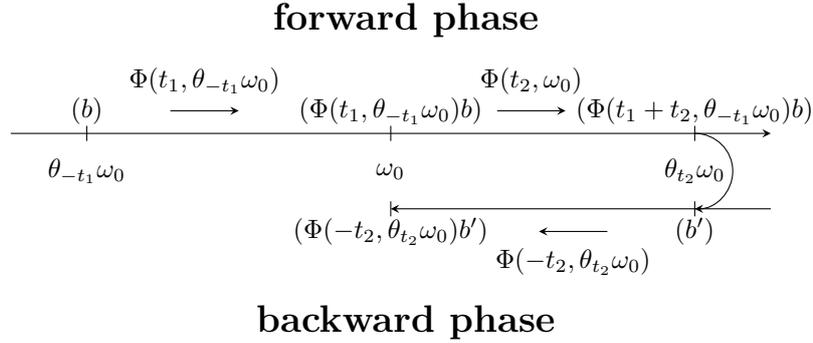
	The asymptotic expansion rate $(\lambda_1+\dots+\lambda_{i})$ of $E_1\oplus\dots\oplus E_i$ is usually computed as a byproduct of the forward phase of the algorithm. Using this information, we can derive the Lyapunov spectrum.\footnote{This concept was already used in 1980 by Benettin \cite{Benettin1980Part1,Benettin1980Part2} to compute the Lyapunov spectrum. Therefore, subsequent applications of the cocycle and orthonormalizations are sometimes called \emph{Benettin steps}.}\par
	In \cref{sectionConvergence} we provide a convergence proof of the whole algorithm as $\min(t_1,t_2)\to\infty$. The speed of convergence turns out to be exponential in relation to the minimum distance of LEs. Furthermore, the kind of convergence differs between discrete and continuous time. The discrete version with $t_1,t_2\in\natural$ converges for almost all initial tuples, whereas the continuous version with $t_1,t_2\in\real_{> 0}$ only converges in measure.

	\subsubsection{Two Examples}
	Next, we present two exemplary cocycles to make the reader familiar with the subtleties of convergence in Ginelli's algorithm.\\
	
	\begin{example}[diagonal cocycle]\label[example]{exampleDiagonalCocycle}
		Assume $\Omega=\{\omega_0\}$ with trivial flow $\theta_t\omega_0=\omega_0$. For given $\lambda_1>\dots>\lambda_p$, define $D:=\textnormal{diag}\left(e^{\lambda_1},\dots,e^{\lambda_p}\right)$. Then, $\Phi(t,\omega_0):=D^t$ is a cocycle and the CLVs (at $\omega_0$) coincide with the standard basis $(e)$ of $\real^d$.\par
		Now, fix a vector $b_1\in\real^d$ with $|\langle b_1,e_1\rangle|>0$. We have
		\begin{equation*}
		\langle D^t b_1,e_i\rangle=\langle b_1,e_i\rangle e^{t\lambda_i}.
		\end{equation*}
		Thus, we compute
		\begin{equation*}
		\left|\left\langle\frac{D^tb_1}{\|D^tb_1\|},e_i\right\rangle\right|^2=\frac{|\langle b_1,e_i\rangle|^2 e^{2t\lambda_i}}{\sum_j |\langle b_1,e_j\rangle|^2 e^{2t\lambda_j}}=\frac{|\langle b_1,e_i\rangle|^2 e^{-2t|\lambda_1-\lambda_i|}}{\sum_j |\langle b_1,e_j\rangle|^2 e^{-2t|\lambda_1-\lambda_j|}}.
		\end{equation*}
		The last nominator takes values between $|\langle b_1,e_1\rangle|^2$ and $\|b_1\|^2$. In particular, it can be treated as a positive constant for the Lyapunov index notation:
		\begin{alignat*}{2}
		\lambda\left(d\left(\overline{U}_1^{(D^tb)},\overline{U}_1^{(e)}\right)\right)&=\frac{1}{2}\,\lambda\left(\left\|\left(I-\overline{P}_1^{(e)}\right)\overline{P}_1^{(D^tb)}\right\|^2\right)&&=\frac{1}{2}\,\lambda\left(\sum_{i\neq 1}\left|\left\langle\frac{D^tb_1}{\|D^tb_1\|},e_i\right\rangle\right|^2\right)\\
		&\leq \max_{i\neq 1} \frac{1}{2}\,\lambda\left(\left|\left\langle\frac{D^tb_1}{\|D^tb_1\|},e_i\right\rangle\right|^2\right)&&\leq \max_{i\neq 1}\,-|\lambda_1-\lambda_i|\\
		&=-|\lambda_1-\lambda_2|.
		\end{alignat*}
		In general, it holds
		\begin{equation*}
		\lambda\left(d\left(\overline{U}_i^{(D^tb)},\overline{U}_i^{(e)}\right)\right)\leq -|\lambda_{i}-\lambda_{i+1}|
		\end{equation*}
		for all tuples $(b)$ that are admissible w.r.t. $(e)$. A more general statement for arbitrary cocycles will be proved in \textnormal{\cref{subsectionForwardPhase}} when analyzing convergence of the forward phase.
	\end{example}
	
	Ginelli's algorithm starts with a random choice of initial vectors to prevent nonadmissible configurations. One such configuration would be the unlikely case where the first vector lies in the second Oseledets space. As Oseledets spaces are covariant, the first vector will stay inside the second Oseledets space when propagated. Consequently, it will not be a good representation of a vector lying in the first Oseledets space. The next example shows that all vectors might be nonadmissible when initiated at a wrong time in the continuous version of Ginelli's algorithm.
	\begin{example}[rotating Oseledets spaces]\label[example]{exampleRotatingOseledetsSpaces}
		Let $\Omega:=S^1\cong\qspace{\real}{\integer}$ be a periodic orbit with homogeneous flow $\theta_t\omega:=\omega+t$. Furthermore, let $R:\real\to \textnormal{SO}(2)$ be the parametrization of $\textnormal{SO}(2)$ by $2\times 2$ rotation matrices
		\begin{equation*}
		R(\omega):=\begin{pmatrix}
		\cos(2\pi\omega) & -\sin(2\pi\omega)\\
		\sin(2\pi\omega) & \cos(2\pi\omega)
		\end{pmatrix},
		\end{equation*}
		so that $R(0)=R(1)=I$ and $R(s+t)=R(s)R(t)$. Moreover, we set $D:=\textnormal{diag}\left(e^{\lambda_1},e^{\lambda_2}\right)$ for some $\lambda_1>\lambda_2$, and define the cocycle to be
		\begin{equation*}
		\Phi(t,\omega):=R(\theta_t\omega)D^tR(-\omega).
		\end{equation*}
		One readily checks that $\Phi(t,\omega)$ indeed is a cocycle over $\theta$.\par		
		Next, we use the characterization of Oseledets spaces via asymptotic growth rates:
		\begin{align*}
		\lim\limits_{t\to\pm\infty}\frac{1}{t}\log\left\|\Phi(t,\omega)\, R(\omega)\begin{pmatrix} x_1 \\ x_2 \end{pmatrix}\right\|&=
		\begin{cases}
		\lambda_1 &x_1\neq 0\text{ and }x_2=0 \\
		\lambda_2 &x_1=0\text{ and }x_2\neq 0
		\end{cases}\\
		\Rightarrow\hspace{1em}E_1(\omega)=\textnormal{span}\left(R(\omega)\begin{pmatrix} 1 \\ 0 \end{pmatrix}\right)\hspace{1em}&\text{and}\hspace{1em}E_2(\omega)=\textnormal{span}\left(R(\omega)\begin{pmatrix} 0 \\ 1 \end{pmatrix}\right).
		\end{align*}
		In particular, both Oseledets spaces are rotating uniformly with $\omega$. Hence, for every fixed vector $b_1\in\real^2$ and $T>0$, we find $t_1\in\real_{>0}$ bigger than $T$ with $b_1\in E_2(\theta_{-t_1}\omega)$. This implies that the continuous version of Ginelli's algorithm does not converge for all fixed choices of $b_1$. Instead, it is shown later that the continuous version converges in measure, i.e. if $b_1$ is chosen randomly.\par
		In the discrete case, however, the set $\bigcup_{t_1\in\natural}E_2(\theta_{-t_1}\omega)$ has Lebesgue-measure zero indicating that the above problem occurs only on a set of measure zero. In fact, we will show convergence for almost all initial tuples in the discrete time case.
	\end{example}
	Setting $D=\textnormal{diag}(e^{\lambda_1},e^{\lambda_1})$ in the previous example yields a trivial Oseledets space $E_1(\omega)=\real^2$ with inner rotation. In general, Oseledets spaces can have complicated internal dynamics that prevent single propagated vectors from converging. Additionally, we already remarked that CLVs are not uniquely defined in the presence of degeneracies. Therefore, objects of interest should not be the propagated vectors themselves, but rather the spaces spanned by them subject to degeneracies.\footnote{In practice, degeneracies can be derived from growth rates of propagated vectors during the forward phase. Moreover, they might be forced by symmetries (e.g. in equivariant systems), whereas, for some classes of systems degenerate scenarios are the exception \cite{Arnold1999}.}\par
	As a closing remark for this section, we would like to mention that there are several other recently developed algorithms, see \cite{Froyland2013,Kuptsov2012,Wolfe2007}, some of which can be treated in a similar fashion to Ginelli's algorithm with the tools developed here.
	\bigskip
	
	\section{Convergence of Ginelli's Algorithm}\label{sectionConvergence}
	Finally, we have gathered enough background knowledge to prove convergence of Ginelli's algorithm. During the proof, we will not distinguish between discrete and continuous time until after we have shown convergence in measure for both cases. Most results will be formulated using the Lyapunov index notation, thus, providing us with a direct link to the exponential speed of convergence.\par

\subsection{Convergence Theorems}\label{subsectionConvergenceTheorems}
	Before formulating our new convergence results, we make one simplification that is motivated by the implementation of Ginelli's algorithm and helps us to formulate the theorems in a more compact way. Namely, as the domain of $(b')$ in the backward phase depends on evolved vectors from the forward phase, it will be convenient to identify the backward domain with a time-independent one. To this end, we set $A^{(f)}\in \textnormal{O}(d)$ as the orthogonal transformation sending the standard basis $(e)$ to the Gram-Schmidt basis of evolved vectors from the forward phase, i.e. $(f)=\mathcal{GS}(\Phi(t_1+t_2,\theta_{-t_1}\omega_0)b)$. Note that forward initial vectors $(b)$ need to be linearly independent in order to get a well-defined mapping. By identifying $\real^{d_1+\dots+d_i}$ with $\real^{d_1+\dots+d_i}\times\{0\}\subset\real^d$ we may regard the restriction of $A^{(f)}$ as an identification between time-independent coefficients and time-dependent vectors:
	\begin{align*}
	\real^{d_1+\dots+d_i}&\to\overline{U}_i^{(\Phi(t_1+t_2,\theta_{-t_1}\omega_0)b)}\\
	\alpha_{i_k}&\mapsto b'_{i_k}.
	\end{align*}
	Thus, we use
	\begin{equation*}
	\left(\real^{d_1}\right)^{d_1}\times\left(\real^{d_1+d_2}\right)^{d_2}\times\dots\times\left(\real^{d_1+\dots+d_{p-1}}\right)^{d_{p-1}}\times\left(\real^{d}\right)^{d_p}\subset \left(\real^d\right)^d
	\end{equation*}
	as the domain for coefficient of the backward phase.
	\begin{theorem}[Convergence in measure of Ginelli's algorithm]\label[theorem]{theoremGinelliConvergenceInMeasure}
		For each compact subset
		\begin{equation*}
		\mathcal{K}\subset\left(\real^d\right)^d\times\left(\left(\real^{d_1}\right)^{d_1}\times\left(\real^{d_1+d_2}\right)^{d_2}\times\dots\times\left(\real^{d_1+\dots+d_{p-1}}\right)^{d_{p-1}}\times\left(\real^{d}\right)^{d_p}\right)
		\end{equation*}
		and $\epsilon>0$, it holds
		\begin{align*}
		&\lim_{T\to\infty}\, \sup_{t_1,t_2\geq T}\mu\bigg(\mathcal{K}\setminus\bigg\{\left((b),(\alpha)\right)\in \mathcal{K}\ \big|\ (b)\text{ linearly independent and }\forall i:\\ &\hspace{3em}\frac{1}{\min(t_1,t_2)}\log d\left(U_i^{\left(\Phi(-t_2,\theta_{t_2}\omega_0)b'\right)},E_i(\omega_0)\right)\\
		&\hspace{3em}\leq-\min\left(|\lambda_i-\lambda_{i-1}|,|\lambda_i-\lambda_{i+1}|\right)+\epsilon\\
		&\hspace{3em}\text{with }(b')=\left(A^{\mathcal{GS}(\Phi(t_1+t_2,\theta_{-t_1}\omega_0)b)}\alpha\right)\bigg\}\bigg)\\
		&\hspace{1em}=0.
		\end{align*}
	\end{theorem}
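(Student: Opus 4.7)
\emph{Strategy.} My plan is to decompose Ginelli's algorithm into its forward and backward phases, analyze each via Proposition~\ref{propositionAdmissiblePropagation} applied to the SVD of the relevant cocycle, and combine the two estimates using the intersection characterization $E_i(\omega_0)=V_i(\omega_0)\cap\overline{E}_i(\omega_0)$ together with Corollary~\ref{corollaryConvergenceRateIntersections}. The admissibility results from Section~\ref{subsectionAdmissibility} will deliver a uniform-in-$(t_1,t_2)$ measure bound on the ``bad'' configurations in $\mathcal{K}$.

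\emph{The two phases.} For the forward phase, write $\Phi(t_1+t_2,\theta_{-t_1}\omega_0)=U\Sigma V^T$. For $(b)\in\mathcal{A}d^{(v)}(\delta)$, Proposition~\ref{propositionAdmissiblePropagation} bounds $d(\overline{U}_i^{(\Phi(t_1+t_2)b)},\overline{U}_i^{(u)})$ by $\delta^{-1}\sigma_{i+1}^{\max}/\sigma_i^{\min}$. The MET (Proposition~\ref{propositionDeterministicMET}) applied to $\Phi$ and $\Phi^-$ shows that this ratio and $d(\overline{U}_i^{(u)},\overline{E}_i(\theta_{t_2}\omega_0))$ each have extended Lyapunov index $\leq -|\lambda_i-\lambda_{i+1}|$, so the triangle inequality yields
\[
\overline\lambda\bigl(d(\overline{U}_i^{(\Phi(t_1+t_2)b)},\overline{E}_i(\theta_{t_2}\omega_0))\bigr)\leq -|\lambda_i-\lambda_{i+1}|.
\]
Running the analogous argument on $\Phi(-t_2,\theta_{t_2}\omega_0)$ (via the reversed-order inverse SVD of Subsection~\ref{subsectionSVD}), using that $b'_{i_k}\in\overline{U}_i^{(f)}$ by construction together with extendable $\delta$-admissibility of $(\alpha)$, I obtain convergence of the relevant backward filtration to $V_i(\omega_0)$ at rate $|\lambda_i-\lambda_{i-1}|$. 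Invoking Corollary~\ref{corollaryConvergenceRateIntersections} on the two approximating families then produces
\[
\overline\lambda\bigl(d(U_i^{(\Phi(-t_2)b')},E_i(\omega_0))\bigr)\leq -\min(|\lambda_i-\lambda_{i-1}|,|\lambda_i-\lambda_{i+1}|),
\]
matching the claimed bound.

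\emph{Measure bound and main obstacle.} Given $\epsilon>0$, I fix $\delta=\delta(\epsilon)$ small enough that the above rate, after absorbing the $O(1)$ prefactors hidden in the Lyapunov index notation, implies the $\epsilon$-slack bound uniformly for all $t_1,t_2\geq T$ with $T$ sufficiently large. The ``bad'' set in $\mathcal{K}$ is contained in the union of configurations failing forward $\delta$-admissibility w.r.t.\ $(v)$ or failing extendable $\delta$-admissibility w.r.t.\ the backward right singular vectors. By the orthogonal invariances $V^d(\mathcal{A}d^{(c)}(\delta))=\mathcal{A}d^{(Vc)}(\delta)$ and $V^d(\mathcal{A}d_{\textnormal{ext}}^{(c)}(\delta))=\mathcal{A}d_{\textnormal{ext}}^{(Vc)}(\delta)$, together with the fact that $\mathcal{K}$ sits inside a product of balls, Corollaries~\ref{corollaryAdmissibleFiniteMeasure} and~\ref{corollaryAdmissibleExtFiniteMeasure} deliver a measure bound $\eta(\delta)$ independent of $(t_1,t_2)$, with $\eta(\delta)\to 0$ as $\delta\searrow 0$. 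The main obstacle is the coupling between the two phases through $A^{(f)}$: the backward admissibility basis depends on the forward output, so the two failure events cannot be treated as independent but must be handled jointly via Fubini-type reasoning combined with the rotation invariance of the admissibility sets, and care is needed to reindex cleanly between the forward-ordered groups used by the algorithm and the reversed backward SVD groups used in the estimate.
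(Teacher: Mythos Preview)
Your overall architecture (forward estimate, backward estimate, intersection via Corollary~\ref{corollaryConvergenceRateIntersections}, then a uniform measure bound on non-admissible configurations) is the same as the paper's. However, the way you run the forward phase does not work as written, and this breaks the intersection step.

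\textbf{The forward SVD is anchored at the wrong point.} You take the SVD of $\Phi(t_1+t_2,\theta_{-t_1}\omega_0)$ and invoke Proposition~\ref{propositionAdmissiblePropagation} together with the MET to conclude
\[
\overline{\lambda}\bigl(d(\overline{U}_i^{(\Phi(t_1+t_2)b)},\overline{E}_i(\theta_{t_2}\omega_0))\bigr)\leq -|\lambda_i-\lambda_{i+1}|.
\]
There are two problems. First, the right and left singular vectors of this operator live over the moving base points $\theta_{-t_1}\omega_0$ and $\theta_{t_2}\omega_0$, so the convergence statements of Lemmas~\ref{lemmaMETProof}--\ref{lemmaMETProofReversedTimeInversed} (all proved at the \emph{fixed} point $\omega_0$) do not apply; you would need uniformity of the MET in the base point, which is not available. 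Second, even granting the estimate, it lands at $\theta_{t_2}\omega_0$, while your backward estimate for $V_i$ is at $\omega_0$. Corollary~\ref{corollaryConvergenceRateIntersections} requires both approximating families to converge to transversal subspaces of the \emph{same} $\real^d$, so you cannot intersect. Transporting your forward estimate back to $\omega_0$ by Corollary~\ref{corollaryInducedTransformationLipschitz} costs a factor $\|\Phi(t_2,\omega_0)\|\,\|\Phi(-t_2,\theta_{t_2}\omega_0)\|$, whose Lyapunov index is $\lambda_1-\lambda_p$ and wipes out the gap $|\lambda_i-\lambda_{i+1}|$.

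The paper avoids this by factoring $\Phi(t_1+t_2,\theta_{-t_1}\omega_0)=\Phi(t_2,\omega_0)\,\Phi(t_1,\theta_{-t_1}\omega_0)$ and taking the SVD of each factor \emph{at} $\omega_0$: the first factor is $(\Phi^-(t_1,\omega_0))^{-1}$, so Lemma~\ref{lemmaMETProofReversedTimeInversed} gives $\overline{U}_i^{(\Phi(t_1,\theta_{-t_1}\omega_0)b)}\to V_{p+1-i}^-(\omega_0)$ directly at $\omega_0$ (this is Lemma~\ref{lemmaProofStep1.1}), which is exactly what the intersection argument needs. The second factor $\Phi(t_2,\omega_0)$ then serves a different purpose: its SVD links the forward output basis $(f)$ to the backward right singular vectors $(\hat v(t_2))$ (Lemma~\ref{lemmaAdmissibleAfterForwardPhase}), which is precisely the coupling you flag as the ``main obstacle'' but do not resolve. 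Without this step there is no reason that $\delta$-extendable admissibility of $(\alpha)$ with respect to the \emph{fixed} basis $(e)^r$ translates into admissibility of $(b')$ with respect to the time-dependent $(\hat v(t_2))$ needed for Proposition~\ref{propositionAdmissiblePropagation} in the backward direction.

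\textbf{A minor point on the measure argument.} Fixing a single $\delta=\delta(\epsilon)$ only yields $\limsup_{T\to\infty}\sup_{t_1,t_2\geq T}\mu(\text{bad})\leq\eta(\delta)$, not $0$; you must then let $\delta\searrow 0$. The paper handles this in one stroke by choosing $\delta(t)\to 0$ with $\lambda(1/\delta)=0$, so that the admissibility threshold relaxes along the limit while Lemma~\ref{lemmaProofStep2} still applies.
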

	Compared to the somewhat more involved notation of \cref{theoremGinelliConvergenceInMeasure} the convergence theorem for discrete time can be formulated quite nicely using the Lyapunov index notation.
	\begin{theorem}[Convergence a.e. of Ginelli's algorithm for $\mathbb{T}=\integer$]\label[theorem]{theoremGinelliConvergenceAlmostEverywhere}
		For almost all pairs of tuples $((b),(\alpha))$ 
		\begin{equation*}
		\left(\real^d\right)^d\times\left(\left(\real^{d_1}\right)^{d_1}\times\left(\real^{d_1+d_2}\right)^{d_2}\times\dots\times\left(\real^{d_1+\dots+d_{p-1}}\right)^{d_{p-1}}\times\left(\real^{d}\right)^{d_p}\right),
		\end{equation*}
		$(b)$ is linearly independent and the algorithm converges:
		\begin{equation*}
		\overline{\lambda}\left(d\left(U_i^{\left(\Phi(-n_2,\theta_{n_2}\omega_0)b'\right)},E_i(\omega_0)\right)\right)\leq-\min\left(|\lambda_i-\lambda_{i-1}|,|\lambda_i-\lambda_{i+1}|\right)
		\end{equation*}
		with $(b')=\left(A^{\mathcal{GS}\left(\Phi(n_1+n_2,\theta_{-n_1}\omega)b\right)}\alpha\right)$.
	\end{theorem}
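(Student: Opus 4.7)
The plan is to upgrade the in-measure statement of \cref{theoremGinelliConvergenceInMeasure} to almost-everywhere convergence by a Borel--Cantelli argument, which is available precisely because $\mathbb{T}=\integer$ makes the parameter set $\natural^2$ countable. Fix a compact product of balls $\mathcal{K}_M$ inside the domain; since the domain is $\sigma$-finite it is enough to prove the statement on each $\mathcal{K}_M$ and take a countable union over $M$. Fix also a small $\beta>0$ and, for each $(n_1,n_2)\in\natural^2$, set $\delta_{n_1,n_2}:=e^{-\beta(n_1+n_2)}$.

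The first step is to introduce a bad set $\mathcal{B}_{n_1,n_2}\subset\mathcal{K}_M$ consisting of those pairs $((b),(\alpha))$ for which either $(b)$ fails $\delta_{n_1,n_2}$-admissibility against the right singular vectors of $\Phi(n_1+n_2,\theta_{-n_1}\omega_0)$, or the coefficient tuple $(\alpha)$ fails extendable $\delta_{n_1,n_2}$-admissibility against the right singular vectors of $\Phi(-n_2,\theta_{n_2}\omega_0)$ after transport through $A^{\mathcal{GS}(\Phi(n_1+n_2,\theta_{-n_1}\omega_0)b)}$. Because this transport is orthogonal and admissibility is invariant under $\textnormal{O}(d,\real)$ (cf.\ the remark after \cref{definitionAdmissible}), the explicit measure estimates in \cref{propositionAdmissibleMeasureEstimate,propositionAdmissibleExtMeasureEstimate} give
\[
\mu(\mathcal{B}_{n_1,n_2})\leq C_M\,e^{-\beta(n_1+n_2)/(d-1)}
\]
uniformly. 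The double sum over $\natural^2$ factorises into two geometric series, so Borel--Cantelli implies that almost every $((b),(\alpha))\in\mathcal{K}_M$ lies in only finitely many $\mathcal{B}_{n_1,n_2}$, i.e.\ is admissible in both phases for all but finitely many $(n_1,n_2)$.

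For such a generic pair and all sufficiently large $(n_1,n_2)$ I would then apply \cref{propositionAdmissiblePropagation} twice. Its forward application to $\Phi(n_1+n_2,\theta_{-n_1}\omega_0)$ bounds the distance between $\overline{U}_i^{(\Phi(n_1+n_2,\theta_{-n_1}\omega_0)b)}$ and the span of the first $d_1+\dots+d_i$ left singular vectors by $\delta_{n_1,n_2}^{-1}\,\sigma_{i+1}^{\max}/\sigma_i^{\min}$, which the MET (\cref{propositionDeterministicMET,propositionOseledetsSpaces}) identifies asymptotically with $E_1(\theta_{n_2}\omega_0)\oplus\dots\oplus E_i(\theta_{n_2}\omega_0)$ at rate $|\lambda_i-\lambda_{i+1}|$; since $n_1+n_2\geq 2\min(n_1,n_2)$ the resulting extended Lyapunov index is at most $-|\lambda_i-\lambda_{i+1}|+O(\beta)$. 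The backward application to $\Phi(-n_2,\theta_{n_2}\omega_0)$, combined with extendable admissibility of the transported $(\alpha)$, analogously forces $U_i^{(\Phi(-n_2,\theta_{n_2}\omega_0)b')}$ to align with a $d_i$-dimensional slice of the backward Oseledets flag $V_{p+1-i}^-(\omega_0)$ at rate $|\lambda_i-\lambda_{i-1}|$. Invoking the characterisation $E_i(\omega_0)=V_i(\omega_0)\cap V_{p+1-i}^-(\omega_0)$ from \cref{propositionOseledetsSpaces} together with \cref{corollaryConvergenceRateIntersections} merges the two exponential rates into $-\min(|\lambda_i-\lambda_{i-1}|,|\lambda_i-\lambda_{i+1}|)+O(\beta)$. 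Sending $\beta\searrow 0$ along the countable sequence $\beta=1/k$ and intersecting the corresponding full-measure sets yields the claimed bound on $\overline{\lambda}$ for a.e.\ pair in $\mathcal{K}_M$.

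The main obstacle is the coupling between the two phases: the backward initial vectors $(b')$ are built from the forward output via $A^{(f)}$, so admissibility of $(b')$ against the backward singular vectors is a property of the joint pair $((b),(\alpha))$ rather than of $(\alpha)$ alone, and the reference basis varies with $(n_1,n_2,b)$. Orthogonality of $A^{(f)}$ is exactly what rescues the measure count, because it preserves both Lebesgue measure and the admissibility parameter, allowing the bound on $\mu(\mathcal{B}_{n_1,n_2})$ to hold uniformly. A secondary subtlety is that the admissibility parameter decays in $n_1+n_2$ while the target rate is measured in $\min(n_1,n_2)$; the inequality $n_1+n_2\geq 2\min(n_1,n_2)$ absorbs the mismatch for the forward phase, while for the backward phase $n_2\geq\min(n_1,n_2)$ plays the same role inside the $\limsup$ defining $\overline{\lambda}$.
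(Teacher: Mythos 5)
Your outline shares the paper's core strategy — exploit the countability of $\natural^2$, use the explicit measure bounds of \cref{propositionAdmissibleMeasureEstimate,propositionAdmissibleExtMeasureEstimate}, and combine the forward and backward phases via \cref{corollaryConvergenceRateIntersections} and $E_i(\omega_0)=V_i(\omega_0)\cap V_{p+1-i}^-(\omega_0)$. The observation that orthogonal invariance of admissibility is what lets the measure estimate on $(\alpha)$ hold uniformly despite the $(b)$-dependence of $A^{(f)}$ is also correct and is the same trick the paper uses.

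However, there is a genuine gap in the analytic core. You pose the admissibility of $(b)$ against the right singular vectors of the \emph{full} propagator $\Phi(n_1+n_2,\theta_{-n_1}\omega_0)$ and then apply \cref{propositionAdmissiblePropagation} to that single map, identifying its left singular flags with the Oseledets spaces at $\theta_{n_2}\omega_0$. But $\Phi(n_1+n_2,\theta_{-n_1}\omega_0)$ is a propagator over time $n_1+n_2$ based at the \emph{moving} state $\theta_{-n_1}\omega_0$. \Cref{propositionDeterministicMET} and \cref{lemmaMETProof}--\cref{lemmaMETProofReversedTimeInversed} give asymptotics of the SVD only for cocycles anchored at a fixed base point, and \cref{corollaryMETWholeOrbit} guarantees equality of the limiting spectrum/flag along the orbit but says nothing about uniformity of the convergence rate as the base point varies. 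Since the extended Lyapunov index normalizes by $\min(n_1,n_2)$ while $n_1$ can be far larger than $n_2$, the base-point-dependent error terms are not controlled, so the asserted bound ``$-|\lambda_i-\lambda_{i+1}|+O(\beta)$'' does not follow. The paper avoids this by decomposing $\Phi(n_1+n_2,\theta_{-n_1}\omega_0)=\Phi(n_2,\omega_0)\Phi(n_1,\theta_{-n_1}\omega_0)$ so that each factor is (the inverse of) a cocycle over $\theta$ evaluated at the fixed state $\omega_0$, and pays for this with the handoff \cref{lemmaAdmissibleAfterStep1.1}; that lemma is exactly what your route elides.

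A secondary but real issue: your $\delta_{n_1,n_2}=e^{-\beta(n_1+n_2)}$ decays exponentially, so $\lambda(1/\delta)\ne 0$, violating the hypothesis of \cref{lemmaProofStep2} (and of \cref{lemmaProofStep1.1,lemmaAdmissibleAfterStep1.1,lemmaAdmissibleAfterForwardPhase}). You would thus need to re-derive the entire chain of lemmas with a $\beta$-dependent penalty and then intersect over $\beta=1/k$; this is not carried out, and it interacts with the uniformity gap above. The paper's choice $\delta_\epsilon(n)=(\epsilon/(\sqrt 2\,n^2))^{d-1}$ is subexponential (so $\lambda(1/\delta_\epsilon)=0$ and the lemmas apply verbatim) while still summable after \cref{propositionAdmissibleMeasureEstimate}, which is the main reason the paper does not need a Borel--Cantelli-with-$\beta$ layer at all.
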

	\Cref{theoremGinelliConvergenceAlmostEverywhere} tells us that, for almost all choices of initial vectors $(b)$ for the forward phase and initial coefficients $(\alpha)$ for the backward phase, the $i$-th output subspace $U_i^{\left(\Phi(-n_2,\theta_{n_2}\omega_0)b'\right)}$ of Ginelli's algorithm converges to $E_i(\omega_0)$ exponentially fast with a rate of $\min\left(|\lambda_i-\lambda_{i-1}|,|\lambda_i-\lambda_{i+1}|\right)$. In particular, the speed of convergence of the whole algorithm is approximately given by \begin{equation*}
		e^{-\min(n_1,n_2)\min_i|\lambda_{i}-\lambda_{i+1}|}.
	\end{equation*}\par
	In applications one usually wants to compute CLVs at more than just one point along a trajectory.\footnote{It is much harder to predict how the rate of convergence changes when switching to another orbit. For example, in the scenario of random dynamical systems as in \cite{Arnold1998} Lyapunov spectrum and Oseledets spaces depend only measurably on $\omega_0$.} In fact, it is feasible to use propagated vectors near $\omega_0$ as approximations to CLVs in Ginelli's algorithm. Thus, it is enough to run the algorithm once. Similar statements on convergence are possible. We only formulate a version for discrete time. 
	\begin{corollary}[Convergence a.e. of Ginelli's algorithm on interval for $\mathbb{T}=\integer$]\label[corollary]{corollaryCLVInterval}
		Let $I\subset\mathbb{T}$ be a bounded interval. For almost all pairs of tuples $((b),(\alpha))$ in
		\begin{equation*}
		\left(\real^d\right)^d\times\left(\left(\real^{d_1}\right)^{d_1}\times\left(\real^{d_1+d_2}\right)^{d_2}\times\dots\times\left(\real^{d_1+\dots+d_{p-1}}\right)^{d_{p-1}}\times\left(\real^{d}\right)^{d_p}\right),
		\end{equation*}
		$(b)$ is linearly independent and the algorithm converges on $I$:
		\begin{equation*}
		\overline{\lambda}\left(\sup_{m\in I}d\left(U_i^{\left(\Phi(-n_2+m,\theta_{n_2}\omega_0)b'\right)},E_i(\theta_m\omega_0)\right)\right)\leq-\min\left(|\lambda_i-\lambda_{i-1}|,|\lambda_i-\lambda_{i+1}|\right)
		\end{equation*}
		with $(b')=\left(A^{\mathcal{GS}\left(\Phi(n_1+n_2,\theta_{-n_1}\omega)b\right)}\alpha\right)$.
	\end{corollary}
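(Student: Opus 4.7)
The plan is to reduce the corollary to \Cref{theoremGinelliConvergenceAlmostEverywhere} by transporting the approximation from $\omega_0$ to each $\theta_m\omega_0$ via the cocycle. Since $\mathbb{T}=\integer$ and $I$ is bounded, $I\cap\integer$ is finite, so the supremum over $m\in I$ is really a maximum over finitely many indices, and the only ingredients needed are covariance of the Oseledets splitting, the Lipschitz estimate for induced transformations of subspaces, and the basic properties of the extended Lyapunov index.

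Concretely, the cocycle property rewrites $\Phi(-n_2+m,\theta_{n_2}\omega_0)=\Phi(m,\omega_0)\,\Phi(-n_2,\theta_{n_2}\omega_0)$, so
\begin{equation*}
U_i^{\left(\Phi(-n_2+m,\theta_{n_2}\omega_0)b'\right)}=\Phi(m,\omega_0)\left(U_i^{\left(\Phi(-n_2,\theta_{n_2}\omega_0)b'\right)}\right),
\end{equation*}
while \Cref{propositionOseledetsSpaces} gives $E_i(\theta_m\omega_0)=\Phi(m,\omega_0)E_i(\omega_0)$. Applying \Cref{corollaryInducedTransformationLipschitz} with $A=\Phi(m,\omega_0)$ yields, for each $m$ individually,
\begin{equation*}
d\left(U_i^{\left(\Phi(-n_2+m,\theta_{n_2}\omega_0)b'\right)},E_i(\theta_m\omega_0)\right)\leq C_m\, d\left(U_i^{\left(\Phi(-n_2,\theta_{n_2}\omega_0)b'\right)},E_i(\omega_0)\right),
\end{equation*}
with $C_m:=\|\Phi(m,\omega_0)\|\,\|\Phi(m,\omega_0)^{-1}\|$ finite and independent of $n_1,n_2$. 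Taking the maximum over the finite set $I\cap\integer$ produces a single constant $C:=\max_{m\in I\cap\integer}C_m<\infty$, so the pointwise estimate becomes uniform in $m$.

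To close the argument I would apply $\overline{\lambda}$ to both sides. By rule 3 of \Cref{propositionExtendedLyapunovIndexProperties} the constant $C$ drops out, hence
\begin{equation*}
\overline{\lambda}\left(\sup_{m\in I}d\left(U_i^{\left(\Phi(-n_2+m,\theta_{n_2}\omega_0)b'\right)},E_i(\theta_m\omega_0)\right)\right)\leq \overline{\lambda}\left(d\left(U_i^{\left(\Phi(-n_2,\theta_{n_2}\omega_0)b'\right)},E_i(\omega_0)\right)\right),
\end{equation*}
and the right-hand side is bounded by $-\min(|\lambda_i-\lambda_{i-1}|,|\lambda_i-\lambda_{i+1}|)$ on the full-measure set of pairs $((b),(\alpha))$ supplied by \Cref{theoremGinelliConvergenceAlmostEverywhere}. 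There is no genuine obstacle here: discreteness of time collapses the uniformity on $I$ to a finite operation, so the same almost-sure set that governs convergence at $\omega_0$ governs convergence on all of $I$, and no additional measure-theoretic argument is needed. The only place where one must exercise minor care is verifying that $C_m$ is truly finite for every $m\in I\cap\integer$, which follows from the invertibility of the cocycle at each fixed time together with the integrability hypothesis in \Cref{propositionDeterministicMET}.
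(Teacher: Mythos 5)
Your proposal is correct and follows essentially the same route as the paper: rewrite $\Phi(-n_2+m,\theta_{n_2}\omega_0)$ via the cocycle property, use covariance of $E_i$, apply \Cref{corollaryInducedTransformationLipschitz} to pull out the constant $C_m$, maximize over the finite set $I\cap\integer$, and note that the extended Lyapunov index absorbs constants before invoking \Cref{theoremGinelliConvergenceAlmostEverywhere}. The only superfluous remark is the appeal to the integrability hypothesis of \Cref{propositionDeterministicMET} to justify $C_m<\infty$; for $\mathbb{T}=\integer$ invertibility of $\Phi(m,\omega_0)$ alone already gives this.
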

	\begin{proof}
		Writing
		\begin{multline*}
		d\left(U_i^{\left(\Phi(-n_2+m,\theta_{n_2}\omega_0)b'\right)},E_i(\theta_m\omega_0)\right)\hspace{1em}\\
		=d\left(\Phi(m,\omega_0)U_i^{\left(\Phi(-n_2,\theta_{n_2}\omega_0)b'\right)},\Phi(m,\omega_0)E_i(\omega_0)\right),
		\end{multline*}
		the claim is a direct consequence of \cref{theoremGinelliConvergenceAlmostEverywhere} and \cref{corollaryInducedTransformationLipschitz}.
	\end{proof}
	In order to prove both theorems, we derive asymptotic characterizations of each phase of Ginelli's algorithm. However, first, we need to understand how singular vectors and Oseledets spaces are connected by invoking the proof of \cref{propositionDeterministicMET} as it can be found in \cite{Arnold1998}.

	\subsection{The Link between Multiplicative Ergodic Theorem and Singular Value Decomposition}\label{subsectionMETSingularVectors}
	Let
	\begin{equation*}
	\Phi(t,\omega_0)=U(t)\Sigma(t) \left(V(t)\right)^T
	\end{equation*}
	be a SVD of the cocycle $\Phi(t,\omega_0)$ for $t\geq 0$, where singular values are ordered as in \cref{equationSingularValuesOrdering}. Using right singular vectors, Arnold shows that the filtration $V_1(t)\supset\dots\supset V_p(t)$ given by
	\begin{equation*}
	V_i(t):=\left(\overline{U}_{i-1}^{(v(t))}\right)^{\perp}
	\end{equation*}
	converges exponentially fast to the filtration $V_1(\omega_0)\supset\dots\supset V_p(\omega_0)$. Distances between filtrations are measured in a special metric. Unraveling the notation, we end up with
	\begin{equation*}
	\forall\, i\neq j:\, \lambda\left(\left\|P_i^{(v(t))}P_j\right\|\right)\leq -|\lambda_{i}-\lambda_{j}|,
	\end{equation*}
	where $P_p+\dots+P_i$ is the orthogonal projection onto $V_i(\omega_0)$ for each $i$.
	\begin{lemma}\label[lemma]{lemmaMETProof}
		It holds
		\begin{equation*}
		\forall i:\, \lambda\left(d\left(\overline{U}_i^{(v(t))},\left(V_{i+1}(\omega_0)\right)^{\perp}\right)\right)\leq-|\lambda_{i}-\lambda_{i+1}|.
		\end{equation*}
	\end{lemma}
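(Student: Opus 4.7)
The plan is to reduce the distance $d\left(\overline{U}_i^{(v(t))},\left(V_{i+1}(\omega_0)\right)^{\perp}\right)$ to a sum of mixed products of block projections and then invoke the estimate provided by Arnold's proof of the MET, namely
\begin{equation*}
\forall\, i'\neq j:\, \lambda\left(\left\|P_{i'}^{(v(t))}P_j\right\|\right)\leq -|\lambda_{i'}-\lambda_{j}|,
\end{equation*}
where $P_j$ denotes the orthogonal projection onto the block of the Oseledets splitting associated with the LE $\lambda_j$.

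First, I would note that both $\overline{U}_i^{(v(t))}$ and $\left(V_{i+1}(\omega_0)\right)^{\perp}$ have dimension $d_1+\dots+d_i$: the former by definition, the latter because the MET filtration satisfies $\dim V_{i+1}(\omega_0)=d_{i+1}+\dots+d_p$. This lets me apply item 4 of \cref{propositionDistance} to rewrite
\begin{equation*}
d\left(\overline{U}_i^{(v(t))},\left(V_{i+1}(\omega_0)\right)^{\perp}\right)=\left\|\overline{P}_i^{(v(t))}P_{V_{i+1}(\omega_0)}\right\|,
\end{equation*}
and then expand both projections as sums of block projections:
\begin{equation*}
\overline{P}_i^{(v(t))}=\sum_{i'=1}^{i}P_{i'}^{(v(t))},\qquad P_{V_{i+1}(\omega_0)}=\sum_{j=i+1}^{p}P_j.
\end{equation*}

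The triangle inequality then gives
\begin{equation*}
\left\|\overline{P}_i^{(v(t))}P_{V_{i+1}(\omega_0)}\right\|\leq\sum_{i'=1}^{i}\sum_{j=i+1}^{p}\left\|P_{i'}^{(v(t))}P_j\right\|,
\end{equation*}
and rules 5 and 6 of \cref{propositionLyapunovIndexProperties} yield
\begin{equation*}
\lambda\left(d\left(\overline{U}_i^{(v(t))},\left(V_{i+1}(\omega_0)\right)^{\perp}\right)\right)\leq\max_{\substack{i'\leq i\\ j\geq i+1}}\lambda\left(\left\|P_{i'}^{(v(t))}P_j\right\|\right)\leq\max_{\substack{i'\leq i\\ j\geq i+1}}-|\lambda_{i'}-\lambda_{j}|.
\end{equation*}
Since the LEs satisfy $\lambda_1>\dots>\lambda_p$, every pair with $i'\leq i<i+1\leq j$ obeys $|\lambda_{i'}-\lambda_{j}|=\lambda_{i'}-\lambda_{j}\geq\lambda_i-\lambda_{i+1}=|\lambda_i-\lambda_{i+1}|$, so the maximum of $-|\lambda_{i'}-\lambda_{j}|$ is attained at $(i',j)=(i,i+1)$ and equals $-|\lambda_i-\lambda_{i+1}|$, closing the proof.

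There is no real obstacle here: the only thing to watch is the dimension check needed to invoke item 4 of \cref{propositionDistance} and the correct identification that $V_{i+1}(\omega_0)^{\perp}=\mathrm{Range}(P_1+\dots+P_i)$, both of which are immediate from the deterministic MET. Everything else is bookkeeping together with Arnold's key estimate on mixed block projections.
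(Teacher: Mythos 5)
Your proposal is correct and follows essentially the same route as the paper: rewrite the distance as $\|\overline{P}_i^{(v(t))}P_{V_{i+1}(\omega_0)}\|$, expand into block projections, apply the triangle inequality and the Lyapunov-index rules, and finish with Arnold's estimate $\lambda(\|P_k^{(v(t))}P_j\|)\leq -|\lambda_k-\lambda_j|$. The paper compresses this into a single chain of inequalities, while you spell out the dimension check and the identification $V_{i+1}(\omega_0)^\perp=\mathrm{Range}(P_1+\dots+P_i)$, but the argument is the same.
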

	\begin{proof}
		We compute
		\begin{alignat*}{2}
		\lambda\left(d\left(\overline{U}_i^{(v(t))},\left(V_{i+1}(\omega_0)\right)^{\perp}\right)\right)&=\lambda\left(\left\|\overline{P}_i^{(v(t))}P_{V_{i+1}(\omega_0)}\right\|\right)&&\leq\lambda\left(\sum_{\substack{k,j \\ k\leq i<j}}\left\|P_k^{(v(t))}P_j\right\|\right)\\
		&\leq \max_{\substack{k,j \\ k\leq i<j}}\,-|\lambda_k-\lambda_j|&&=-|\lambda_{i}-\lambda_{i+1}|.
		\end{alignat*}
	\end{proof}
	A similar result holds for the time-reversed cocycle $\Phi^-$ with SVD
	\begin{equation*}
	\Phi(-t,\omega_0)=U^-(t)\Sigma^-(t) \left(V^-(t)\right)^T
	\end{equation*}
	for $t\geq 0$, where singular values are ordered as in \cref{equationSingularValuesOrdering}. Note that, for the time-reversed cocycle, we need to consider reversed degeneracies: $d^-_1,\dots,d_p^-$. To distinguish between both types of degeneracies we equip the notation introduced in \cref{subsectionGramSchmidt} with a minus sign following the subindex, whenever we count with respect to reversed degeneracies.
	\begin{lemma}\label[lemma]{lemmaMETProofReversedTime}
		It holds
		\begin{equation*}
		\forall i:\, \lambda\left(d\left(\overline{U}_{i^-}^{(v^-(t))},\left(V^-_{i+1}(\omega_0)\right)^{\perp}\right)\right)\leq-|\lambda^-_{i}-\lambda^-_{i+1}|.
		\end{equation*}
	\end{lemma}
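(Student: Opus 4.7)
The plan is to mirror the proof of \cref{lemmaMETProof} by applying Arnold's construction underlying \cref{propositionDeterministicMET} to the time-reversed cocycle $\Phi^-$ in place of $\Phi$. The MET hypotheses have already been verified for $\Phi^-$ in the paragraph preceding \cref{propositionOseledetsSpaces} (yielding spectrum $(\lambda_i^-,d_i^-)$ and filtration $V_i^-(\omega_0)$), so rerunning the same construction with the reversed data produces the direct analog of the unnumbered display just before \cref{lemmaMETProof}, namely
\[
\forall\, i\neq j:\,\lambda\!\left(\left\|P_{i^-}^{(v^-(t))}P_{j^-}^{-}\right\|\right)\leq -|\lambda^-_{i}-\lambda^-_{j}|,
\]
where $P_{p}^{-}+\dots+P_{i}^{-}$ denotes the orthogonal projection onto $V_i^-(\omega_0)$ and the Gram--Schmidt projections $P_{k^-}^{(v^-(t))}$ are formed with respect to the reversed degeneracies $d_1^-,\dots,d_p^-$ (as flagged by the $(\cdot)^-$ convention introduced just before the lemma).

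With this mixed-projection estimate in hand, I would transcribe the chain of inequalities from the proof of \cref{lemmaMETProof} verbatim. Expanding $P_{V^-_{i+1}(\omega_0)}=P^-_{i+1}+\dots+P^-_p$ and $\overline{P}_{i^-}^{(v^-(t))}=\sum_{k\leq i}P_{k^-}^{(v^-(t))}$, applying the triangle inequality for the operator norm, and invoking rules 5 and 6 of \cref{propositionLyapunovIndexProperties} together with property 4 of \cref{propositionDistance}, one arrives at
\[
\lambda\!\left(d\!\left(\overline{U}_{i^-}^{(v^-(t))},\left(V^-_{i+1}(\omega_0)\right)^{\perp}\right)\right)\leq \max_{\substack{k,j\\ k\leq i<j}}-|\lambda^-_{k}-\lambda^-_{j}|=-|\lambda^-_{i}-\lambda^-_{i+1}|,
\]
where the last equality uses strict monotonicity of the reversed Lyapunov spectrum.

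The main thing to be careful about, and really the only non-trivial bookkeeping, is consistently using the reversed degeneracies throughout: $F^{(v^-(t))}_{i^-}$, $\overline{U}^{(v^-(t))}_{i^-}$ and the corresponding projections are indexed by the block structure $d_1^-,\dots,d_p^-$ rather than $d_1,\dots,d_p$, and this must be matched on the $V_i^-(\omega_0)$ side when decomposing $P_{V^-_{i+1}(\omega_0)}$ into the summands $P^-_j$. Once this notational matching is set up, no genuine new difficulty arises; the lemma is a direct transcription of \cref{lemmaMETProof} applied to the reversed cocycle.
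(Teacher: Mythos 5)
Your proposal is correct and matches the paper's intent exactly: the paper omits the proof of \cref{lemmaMETProofReversedTime} precisely because it is, as you observe, a verbatim transcription of the proof of \cref{lemmaMETProof} with $\Phi$ replaced by $\Phi^-$, $(v(t))$ by $(v^-(t))$, $V_i(\omega_0)$ by $V_i^-(\omega_0)$, and the index bookkeeping carried out with respect to the reversed degeneracies $d_1^-,\dots,d_p^-$. You correctly identified the only subtlety (consistently using reversed degeneracies on both sides) and the mixed-projection estimate from Arnold's construction that drives the argument.
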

	The algorithm of Ginelli starts by propagating vectors from past to present, i.e. we apply $\Phi(t,\theta_{-t}\omega_0)=\left(\Phi(-t,\omega_0)\right)^{-1}$, and ends with propagating vectors from future to present, i.e. we apply $\Phi(-t,\theta_{t}\omega_0)=\left(\Phi(t,\omega_0)\right)^{-1}$. Thus, it is important to keep track of singular vectors for inverted cocycles as well.
	\begin{lemma}\label[lemma]{lemmaMETProofInversed}
		It holds
		\begin{equation*}
		\forall i:\, \lambda\left(d\left(\overline{U}_{i^-}^{(\hat{u}(t))},V_{p+1-i}(\omega_0)\right)\right)\leq-|\lambda_{p-i}-\lambda_{p+1-i}|.
		\end{equation*}
	\end{lemma}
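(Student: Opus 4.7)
The plan is to translate the claim into a statement about the forward SVD and then apply \cref{lemmaMETProof}, exploiting the identity relating left singular vectors of the inverse with right singular vectors of the original map, together with the fact that distance between subspaces is preserved under orthogonal complementation.

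First I would unpack the hat-notation. By the discussion following \cref{definitionSVD}, a SVD of $\Phi(t,\omega_0)^{-1}$ is obtained from $\Phi(t,\omega_0)=U(t)\Sigma(t)V(t)^T$ by reversing the order of singular values and swapping the roles of left/right singular vectors. Thus $(\hat u(t))=(v(t))^r$. Combined with the reversed degeneracies $d_i^-=d_{p+1-i}$, the tuple $\bigl(\hat u_{1^-_1}(t),\dots,\hat u_{i^-_{d^-_i}}(t)\bigr)$ consists precisely of the last $d_p+d_{p-1}+\dots+d_{p+1-i}$ vectors of $(v(t))$. Since $(v(t))$ is an ONB, the span of those trailing vectors is the orthogonal complement of the span of the leading $d_1+\dots+d_{p-i}$ vectors, i.e.
\begin{equation*}
\overline{U}_{i^-}^{(\hat u(t))} \;=\; \left(\overline{U}_{p-i}^{(v(t))}\right)^{\perp}.
\end{equation*}

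Next I would invoke \cref{lemmaMETProof} at index $p-i$ (with respect to the non-reversed degeneracies of the forward cocycle):
\begin{equation*}
\lambda\!\left(d\!\left(\overline{U}_{p-i}^{(v(t))},\bigl(V_{p-i+1}(\omega_0)\bigr)^{\perp}\right)\right)\;\leq\;-|\lambda_{p-i}-\lambda_{p-i+1}|.
\end{equation*}
By item 2 of \cref{propositionDistance}, the distance is invariant under taking orthogonal complements on both arguments. Applying this identity with $M=\overline{U}_{p-i}^{(v(t))}$ and $N=\bigl(V_{p-i+1}(\omega_0)\bigr)^{\perp}$, and rewriting $p-i+1=p+1-i$, we obtain
\begin{equation*}
d\!\left(\overline{U}_{i^-}^{(\hat u(t))},V_{p+1-i}(\omega_0)\right)
\;=\;d\!\left(\overline{U}_{p-i}^{(v(t))},\bigl(V_{p+1-i}(\omega_0)\bigr)^{\perp}\right),
\end{equation*}
from which the claimed Lyapunov-index bound follows immediately by monotonicity of $\lambda$ (rule 5 of \cref{propositionLyapunovIndexProperties}), since $\lambda_{p-i+1}=\lambda_{p+1-i}$.

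The whole argument is essentially bookkeeping, and I expect the only real hazard to be keeping the index conventions straight: one must remember that the hat-tuple is indexed by the reversed degeneracies $d_i^-$, whereas \cref{lemmaMETProof} is stated in terms of the original degeneracies. Once the equality $\overline{U}_{i^-}^{(\hat u(t))}=\bigl(\overline{U}_{p-i}^{(v(t))}\bigr)^{\perp}$ is established, the rest is a one-line application of \cref{propositionDistance} and \cref{lemmaMETProof}.
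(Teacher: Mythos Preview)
Your proposal is correct and follows essentially the same route as the paper: both identify $(\hat u(t))=(v(t))^r$, use the ONB identity $\overline{U}_{i^-}^{(c)^r}=\bigl(\overline{U}_{p-i}^{(c)}\bigr)^{\perp}$, apply invariance of $d$ under orthogonal complementation (\cref{propositionDistance}), and then invoke \cref{lemmaMETProof} at index $p-i$. One minor remark: since you actually obtain an \emph{equality} of distances, the appeal to monotonicity (rule 5) is unnecessary---the Lyapunov index bound from \cref{lemmaMETProof} transfers directly.
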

	\begin{proof}
		This is a consequence of \cref{lemmaMETProof}, since
		\begin{align*}
		d\left(\overline{U}_{i^-}^{(\hat{u}(t))},V_{p+1-i}(\omega_0)\right)&=d\left(\overline{U}_{i^-}^{(v(t))^r},V_{p+1-i}(\omega_0)\right)\\
		&=d\left(\left(\overline{U}_{p-i}^{(v(t))}\right)^{\perp},V_{p+1-i}(\omega_0)\right)\\
		&=d\left(\overline{U}_{p-i}^{(v(t))},\left(V_{p+1-i}(\omega_0)\right)^{\perp}\right).
		\end{align*}
		Here, we used the identity
		\begin{equation*}
		\overline{U}_{i^-}^{(c)^r}=\left(\overline{U}_{p-i}^{(c)}\right)^{\perp},
		\end{equation*}
		which is true for all ONBs $(c)$.
	\end{proof}
	Again, we derive a similar result for reversed time.
	\begin{lemma}\label[lemma]{lemmaMETProofReversedTimeInversed}
		It holds
		\begin{equation*}
		\forall i:\, \lambda\left(d\left(\overline{U}_{i}^{(\hat{u}^-(t))},V^-_{p+1-i}(\omega_0)\right)\right)\leq-|\lambda_{i}-\lambda_{i+1}|.
		\end{equation*}
	\end{lemma}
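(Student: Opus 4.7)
The plan is to mirror the derivation in \cref{lemmaMETProofInversed} but with the cocycle replaced by its time-reverse $\Phi^-$. The starting point is the same SVD observation used there: a left singular vector tuple of an inverse is a reversal of the original right singular vector tuple, so $(\hat{u}^-(t)) = (v^-(t))^r$. Since $(v^-(t))$ carries the reversed degeneracies $(d_1^-,\dots,d_p^-)$, its reversal $(v^-(t))^r$ naturally carries the original degeneracies $(d_1,\dots,d_p)$, which is exactly why the outer index on the left-hand side of the claim is written without a minus.

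First I would rewrite the filtration space using the ONB reversal identity (the same identity invoked at the end of the proof of \cref{lemmaMETProofInversed}, but applied with $v^-(t)$ in place of $v(t)$, so the roles of regular and reversed degeneracies swap):
\begin{equation*}
\overline{U}_{i}^{(\hat{u}^-(t))} \;=\; \overline{U}_{i}^{(v^-(t))^r} \;=\; \left(\overline{U}_{(p-i)^-}^{(v^-(t))}\right)^{\perp}.
\end{equation*}
Next, I would apply property 2 of \cref{propositionDistance}, which says distance is invariant under taking orthogonal complements on both sides, to get
\begin{equation*}
d\!\left(\overline{U}_{i}^{(\hat{u}^-(t))},\, V^-_{p+1-i}(\omega_0)\right) \;=\; d\!\left(\overline{U}_{(p-i)^-}^{(v^-(t))},\, \left(V^-_{p+1-i}(\omega_0)\right)^{\perp}\right).
\end{equation*}

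Now I would invoke \cref{lemmaMETProofReversedTime} at index $p-i$ (noting $p+1-i=(p-i)+1$), which yields
\begin{equation*}
\lambda\!\left(d\!\left(\overline{U}_{(p-i)^-}^{(v^-(t))},\, \left(V^-_{(p-i)+1}(\omega_0)\right)^{\perp}\right)\right) \;\leq\; -\,\bigl|\lambda^-_{p-i}-\lambda^-_{p+1-i}\bigr|.
\end{equation*}
Finally, I would translate the exponent back to the forward spectrum via the imposed relation $\lambda^-_j=-\lambda_{p+1-j}$, giving $|\lambda^-_{p-i}-\lambda^-_{p+1-i}|=|(-\lambda_{i+1})-(-\lambda_i)|=|\lambda_i-\lambda_{i+1}|$, which is the rate claimed.

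The only non-mechanical part is bookkeeping: one must keep track of which tuples inherit the original versus the reversed degeneracy convention and make sure the subscript $(p-i)^-$ in the intermediate step matches the way \cref{lemmaMETProofReversedTime} is indexed. No new analytic content is needed beyond the ONB reversal identity, the orthogonal-complement symmetry of $d$, and the already established reversed-time version of the singular vector alignment. Because all ingredients are in place, I expect the proof to be short and essentially parallel to that of \cref{lemmaMETProofInversed}.
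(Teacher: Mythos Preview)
Your proposal is correct and follows exactly the route the paper intends: the paper states this lemma without proof, indicating only that it is the time-reversed analogue of \cref{lemmaMETProofInversed}, and your argument faithfully carries out that analogy—reversal identity for singular vectors, the ONB identity $\overline{U}_{i}^{(c)^r}=\bigl(\overline{U}_{(p-i)^-}^{(c)}\bigr)^{\perp}$ with the degeneracy roles swapped, complement-invariance of $d$, and an appeal to \cref{lemmaMETProofReversedTime} at index $p-i$ followed by the spectrum relation $\lambda_j^-=-\lambda_{p+1-j}$. The bookkeeping you flag (that $(v^-(t))$ carries reversed degeneracies while $(v^-(t))^r$ carries the original ones) is handled correctly.
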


	\subsection{Forward Phase}\label{subsectionForwardPhase}
	Step 1.1 of Ginelli's algorithm propagates vectors from past to present. It turns out that admissible tuples yield good approximations to $V_{p+1-i}^-(\omega_0)=E_1(\omega_0)\oplus\dots\oplus E_i(\omega_0)$. Moreover, changes of the admissibility parameter on subexponential scales do not influence the exponential speed of convergence of the algorithm. 
	\begin{lemma}\label[lemma]{lemmaProofStep1.1}
		Let $0<\delta(t)<1$ be a sequence with $\lambda\left(\frac{1}{\delta}\right)=0$. We have
		\begin{equation*}
		\lambda\left(\sup_{(b)\in\mathcal{A}d^{(\hat{v}^-(t))}(\delta(t))}d\left(\overline{U}_i^{\left(\Phi(t,\theta_{-t}\omega_0)b\right)}, V_{p+1-i}^-(\omega_0)\right)\right)\leq -|\lambda_{i}-\lambda_{i+1}|.
		\end{equation*}
	\end{lemma}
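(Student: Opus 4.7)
The plan is to reduce the forward-phase estimate to a direct application of \cref{propositionAdmissiblePropagation}, followed by the comparison between singular-vector filtrations and Oseledets filtrations provided by \cref{lemmaMETProofReversedTimeInversed}. First I would identify $\hat{v}^-(t)$ as the right singular vectors of the propagator $A:=\Phi(t,\theta_{-t}\omega_0)=\Phi(-t,\omega_0)^{-1}$. Writing the SVD $\Phi(-t,\omega_0)=U^-(t)\Sigma^-(t)(V^-(t))^T$ and applying the inversion rule from \cref{subsectionSVD}, the right singular vectors of $A$ are $\hat{v}^-(t)=(u^-(t))^r$, the left ones are $\hat{u}^-(t)=(v^-(t))^r$, and the singular values $\hat\sigma^-_{i_k}(t)$ are obtained from $\sigma^-_{i_k}(t)$ by inversion and reversal of order. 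Since $(b)\in\mathcal{A}d^{(\hat{v}^-(t))}(\delta(t))$ by assumption, \cref{propositionAdmissiblePropagation} applied to $A$ gives the pointwise bound
\begin{equation*}
d\left(\overline{U}_i^{(\Phi(t,\theta_{-t}\omega_0)b)},\overline{U}_i^{(\hat{u}^-(t))}\right)\leq \frac{1}{\delta(t)}\,\frac{\hat\sigma^{-\max}_{i+1}(t)}{\hat\sigma^{-\min}_i(t)},
\end{equation*}
whose right-hand side is independent of $(b)$.

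Next I would evaluate the Lyapunov index of this upper bound via the MET. The assumption $\lambda_i^-=-\lambda_{p+1-i}$ together with \cref{propositionDeterministicMET} applied to $\Phi^-$ yields $\lim_{t\to\infty}\tfrac{1}{t}\log\sigma^-_{i_k}(t)=-\lambda_{p+1-i}$. Undoing the inversion and reversal of order in the SVD of $A$, this gives $\lim_{t\to\infty}\tfrac{1}{t}\log\hat\sigma^-_{i_k}(t)=\lambda_i$, and hence $\lambda(\hat\sigma^{-\max}_{i+1})\leq\lambda_{i+1}$ together with $\lambda(1/\hat\sigma^{-\min}_i)\leq-\lambda_i$. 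Combining these with the hypothesis $\lambda(1/\delta)=0$ through the product rule of \cref{propositionLyapunovIndexProperties}, we obtain
\begin{equation*}
\lambda\left(\sup_{(b)\in\mathcal{A}d^{(\hat{v}^-(t))}(\delta(t))}d\left(\overline{U}_i^{(\Phi(t,\theta_{-t}\omega_0)b)},\overline{U}_i^{(\hat{u}^-(t))}\right)\right)\leq\lambda_{i+1}-\lambda_i=-|\lambda_i-\lambda_{i+1}|.
\end{equation*}

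Finally, I would switch the approximation target from $\overline{U}_i^{(\hat{u}^-(t))}$ to $V^-_{p+1-i}(\omega_0)$ via the triangle inequality for the subspace metric $d$ (cf.\ \cref{propositionDistance}):
\begin{equation*}
d\left(\overline{U}_i^{(\Phi(t,\theta_{-t}\omega_0)b)},V^-_{p+1-i}(\omega_0)\right)\leq d\left(\overline{U}_i^{(\Phi(t,\theta_{-t}\omega_0)b)},\overline{U}_i^{(\hat{u}^-(t))}\right)+d\left(\overline{U}_i^{(\hat{u}^-(t))},V^-_{p+1-i}(\omega_0)\right).
\end{equation*}
The Lyapunov index of the second summand is already bounded by $-|\lambda_i-\lambda_{i+1}|$ by \cref{lemmaMETProofReversedTimeInversed}, so taking the sup over $(b)$ and invoking the subadditivity rule $\lambda(f+g)\le\max(\lambda(f),\lambda(g))$ from \cref{propositionLyapunovIndexProperties} then closes the argument. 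The main obstacle is bookkeeping: one must carefully track how SVD inversion reorders both the groups of singular values and the vectors within each group so that the MET-given rates $\lambda_i$ appear with the correct sign in the singular-value ratio that drives the estimate.
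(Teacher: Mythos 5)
Your proof is correct and follows essentially the same route as the paper: it applies \cref{propositionAdmissiblePropagation} to $A=\Phi(-t,\omega_0)^{-1}$, uses \cref{lemmaMETProofReversedTimeInversed} for the MET-provided filtration, and combines them via the triangle inequality and the Lyapunov index calculus. The only cosmetic difference is that you evaluate the Lyapunov indices of the $\hat\sigma^-$ directly via the MET, while the paper first rewrites the ratio $\hat\sigma^{-\max}_{i+1}/\hat\sigma^{-\min}_i$ in terms of $\sigma^-_{p+1-i}$ and $\sigma^-_{p-i}$; both bookkeepings are equivalent.
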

	\begin{proof}
		First use the triangle inequality, then apply \cref{propositionAdmissiblePropagation} to the map $A=\left(\Phi(-t,\omega_0)\right)^{-1}$, and finally use \cref{lemmaMETProofReversedTimeInversed} to obtain
		\begin{align*}
		&\lambda\left(\sup_{(b)\in\mathcal{A}d^{(\hat{v}^-(t))}(\delta(t))}d\left(\overline{U}_i^{\left(\Phi(t,\theta_{-t}\omega_0)b\right)}, V_{p+1-i}^-(\omega_0)\right)\right)\\
		&\hspace{1em}\leq\max\Bigg( \lambda\left(\sup_{(b)\in\mathcal{A}d^{(\hat{v}^-(t))}(\delta(t))}d\left(\overline{U}_i^{\left(\Phi(t,\theta_{-t}\omega_0)b\right)},\overline{U}_i^{(\hat{u}^-(t))}\right)\right),\\
		&\hspace{3em}\lambda\left(d\left(\overline{U}_i^{(\hat{u}^-(t))}, V_{p+1-i}^-(\omega_0)\right)\right)\Bigg)\\
		&\hspace{1em}\leq\max\left(\lambda\left(\frac{1}{\delta(t)}\,\frac{\left(\hat{\sigma}^-_{i+1}(t)\right)^{\max}}{\left(\hat{\sigma}^-_{i}(t)\right)^{\min}}\right),-|\lambda_{i}-\lambda_{i+1}|\right)\\
		&\hspace{1em}\leq\max\left(\lambda\left(\frac{\left({\sigma}^-_{p+1-i}(t)\right)^{\max}}{\left({\sigma}^-_{p-i}(t)\right)^{\min}}\right),-|\lambda_{i}-\lambda_{i+1}|\right)\\
		&\hspace{1em}=-|\lambda_{i}-\lambda_{i+1}|.
		\end{align*}
	\end{proof}
	To continue using our tools for step 1.2 we need to retain admissibility for tuples propagated in step 1.1.
	\begin{lemma}\label[lemma]{lemmaAdmissibleAfterStep1.1}
		Let $0<\delta(t)<1$ with $\lambda\left(\frac{1}{\delta}\right)=0$. There are $0<\epsilon<1$ and $T>0$ such that admissible tuples in step 1.1 get mapped to admissible tuples for step 1.2, i.e.
		\begin{equation*}
		\left(\Phi(t_1,\theta_{-t_1}\omega_0)\right)^d\left(\mathcal{A}d^{(\hat{v}^-(t_1))}(\delta(t_1))\right)\subset\mathcal{A}d^{(v(t_2))}(\epsilon),
		\end{equation*}
		for all $t_1,t_2\geq T$.
	\end{lemma}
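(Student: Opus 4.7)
The plan is to control, uniformly over $(b)\in\mathcal{A}d^{(\hat v^-(t_1))}(\delta(t_1))$ and for every index $i<p$, the quantity
\begin{equation*}
	d\!\left(\overline{U}_i^{(\Phi(t_1,\theta_{-t_1}\omega_0)b)},\, \overline{U}_i^{(v(t_2))}\right)
\end{equation*}
by inserting via the triangle inequality the two limiting objects $V_{p+1-i}^-(\omega_0)$ (the asymptotic limit of the forward-propagated filtration) and $V_{i+1}(\omega_0)^\perp$ (the asymptotic limit of $\overline{U}_i^{(v(t_2))}$). Once this sum is strictly below some $\gamma'<1$, the admissibility condition $d(\cdot,\cdot)^2\leq 1-\epsilon^2$ follows by choosing $\epsilon:=\sqrt{1-(\gamma')^2}$.

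The three resulting summands are handled separately. The first, $d(\overline{U}_i^{(\Phi(t_1,\theta_{-t_1}\omega_0)b)}, V_{p+1-i}^-(\omega_0))$, is precisely what \cref{lemmaProofStep1.1} estimates: its supremum over admissible $(b)$ has Lyapunov index at most $-|\lambda_i-\lambda_{i+1}|<0$ and therefore tends to $0$ as $t_1\to\infty$. The third, $d(V_{i+1}(\omega_0)^\perp, \overline{U}_i^{(v(t_2))})$, is controlled directly by \cref{lemmaMETProof} and similarly decays to $0$ as $t_2\to\infty$. For the middle \emph{constant} $\gamma_i:=d(V_{p+1-i}^-(\omega_0), V_{i+1}(\omega_0)^\perp)$, I note that both subspaces have dimension $d_1+\dots+d_i$ (using $d_j^-=d_{p+1-j}$), and the transversality assumption $V_{i+1}(\omega_0)\cap V_{p+1-i}^-(\omega_0)=\{0\}$ from the setting combined with item $5$ of \cref{propositionDistance} gives $\gamma_i<1$.

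Setting $\gamma:=\max_{i<p}\gamma_i<1$ and picking any $\gamma'\in(\gamma,1)$, I would then choose $T$ so large that for all $t_1,t_2\geq T$ each of the two exponentially decaying terms is below $(\gamma'-\gamma)/2$, uniformly in $(b)\in\mathcal{A}d^{(\hat v^-(t_1))}(\delta(t_1))$; the hypothesis $\lambda(1/\delta)=0$ ensures this uniformity is not spoiled by the admissibility parameter. The triangle inequality then yields $d(\overline{U}_i^{(\Phi b)}, \overline{U}_i^{(v(t_2))})\leq \gamma'$ for every $i<p$, which gives the desired $\epsilon$-admissibility with $\epsilon:=\sqrt{1-(\gamma')^2}$. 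Linear independence of the propagated tuple is immediate from linear independence of $(b)$ (implied by admissibility) and invertibility of $\Phi(t_1,\theta_{-t_1}\omega_0)$. The main obstacle I anticipate is cleanly arguing $\gamma_i<1$: one has to match dimensions for \cref{propositionDistance} item $5$ to apply and then trace the transversality hypothesis, rather than any analytic difficulty with the decaying terms, which are already done.
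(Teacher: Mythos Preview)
Your proposal is correct and follows essentially the same route as the paper: split via the triangle inequality through $V_{p+1-i}^-(\omega_0)$ and $(V_{i+1}(\omega_0))^\perp$, bound the two outer terms using \cref{lemmaProofStep1.1} and \cref{lemmaMETProof}, and bound the middle constant term strictly below $1$ via item~5 of \cref{propositionDistance} together with the standing transversality assumption $V_{i+1}(\omega_0)\cap V_{p+1-i}^-(\omega_0)=\{0\}$. The only cosmetic difference is that the paper fixes $\epsilon$ first (via $d(V_{p+1-i}^-,(V_{i+1})^\perp)\leq\sqrt{1-\epsilon^2}-2\epsilon$) and then bounds each decaying term by $\epsilon$, whereas you first take $\gamma:=\max_i\gamma_i$, pick $\gamma'\in(\gamma,1)$, and set $\epsilon:=\sqrt{1-(\gamma')^2}$; the content is identical.
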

	\begin{proof}
		Choose $0<\epsilon<1$ with 
		\begin{equation*}
		d\left(V_{p+1-i}^-(\omega_0),\left(V_{i+1}(\omega_0)\right)^{\perp}\right)\leq\sqrt{1-\epsilon^2}-2\epsilon.
		\end{equation*}
		This is possible due to \cref{propositionDistance}, since we assumed $V_{p+1-i}^-(\omega_0)\cap V_{i+1}(\omega_0)=\{0\}$. Now, \Cref{lemmaProofStep1.1} gives us the existence of $T_1>0$ such that for all $t_1\geq T_1$ and all $(b)\in\mathcal{A}d^{\left(\hat{v}^-(t_1)\right)}(\delta(t_1))$ it holds
		\begin{equation*}
		d\left(\overline{U}_i^{\left(\Phi(t_1,\theta_{-t_1}\omega_0)b\right)},V_{p+1-i}^-(\omega_0)\right)\leq\epsilon.
		\end{equation*}
		Moreover, \cref{lemmaMETProof} yields $T_2>0$ with
		\begin{equation*}
		d\left(\left(V_{i+1}(\omega_0)\right)^{\perp},\overline{U}_i^{(v(t_2))}\right)\leq\epsilon
		\end{equation*}
		for all $t_2\geq T_2$. Set $T:=\max(T_1,T_2)$ and combine the previous three estimates for
		\begin{align*}
		d\left(\overline{U}_i^{\left(\Phi(t_1,\theta_{-t_1}\omega_0)b\right)},\overline{U}_i^{(v(t_2))}\right)&\leq  d\left(\overline{U}_i^{\left(\Phi(t_1,\theta_{-t_1}\omega_0)b\right)},V_{p+1-i}^-(\omega_0)\right)\\
		&\hspace{1em}+d\left(V_{p+1-i}^-(\omega_0),\left(V_{i+1}(\omega_0)\right)^{\perp}\right)\\
		&\hspace{1em}+d\left(\left(V_{i+1}(\omega_0)\right)^{\perp},\overline{U}_i^{(v(t_2))}\right)\\
		&\leq\sqrt{1-\epsilon^2}.
		\end{align*}
		This concludes the proof.
	\end{proof}
	The following lemma combines step 1.1 and 1.2 into a characterization of the forward phase.
	\begin{lemma}\label[lemma]{lemmaProofStep1.2}
		Let $0<\delta(t)<1$ with $\lambda\left(\frac{1}{\delta}\right)=0$. There is $T>0$ such that
		\begin{equation*}
		\lambda\left(\sup_{t_1\geq T}\, \sup_{(b)\in\mathcal{A}d^{(\hat{v}^-(t_1))}(\delta(t_1))}d\left(\overline{U}_i^{\left(\Phi(t_1+t_2,\theta_{-t_1}\omega_0)b\right)},\overline{U}_i^{(u(t_2))}\right)\right)\leq -|\lambda_{i}-\lambda_{i+1}|
		\end{equation*}
		holds, where the limit of the Lyapunov index is taken with respect to $t_2$.
	\end{lemma}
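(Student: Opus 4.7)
The plan is to factor the total propagation over $[-t_1, t_2]$ through the present state $\omega_0$ using the cocycle property, so that step 1.2 is isolated as a single application of $\Phi(t_2,\omega_0)$ to tuples that are already suitably admissible with respect to its right singular vectors. More precisely, for any $(b)\in\mathcal{A}d^{(\hat{v}^-(t_1))}(\delta(t_1))$, set $(b^\#):=(\Phi(t_1,\theta_{-t_1}\omega_0)b)$, so that
\begin{equation*}
\overline{U}_i^{(\Phi(t_1+t_2,\theta_{-t_1}\omega_0)b)} = \Phi(t_2,\omega_0)\,\overline{U}_i^{(b^\#)} = \overline{U}_i^{(\Phi(t_2,\omega_0)b^\#)}.
\end{equation*}

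I first invoke \Cref{lemmaAdmissibleAfterStep1.1} to pick $0<\epsilon<1$ and $T>0$ such that $(b^\#)\in\mathcal{A}d^{(v(t_2))}(\epsilon)$ uniformly for all $t_1,t_2\geq T$ and all $(b)\in\mathcal{A}d^{(\hat{v}^-(t_1))}(\delta(t_1))$. This is exactly the content of that lemma, and it is the reason we are allowed to take $\sup_{t_1\geq T}$ without spoiling the admissibility bound used below.

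Next, with $A:=\Phi(t_2,\omega_0)=U(t_2)\Sigma(t_2)V(t_2)^T$, I apply \Cref{propositionAdmissiblePropagation} to each such $(b^\#)$, obtaining
\begin{equation*}
d\left(\overline{U}_i^{(\Phi(t_2,\omega_0)b^\#)},\overline{U}_i^{(u(t_2))}\right)\leq\frac{1}{\epsilon}\,\frac{\sigma_{i+1}^{\max}(t_2)}{\sigma_i^{\min}(t_2)}.
\end{equation*}
Taking the supremum over admissible $(b)$ and over $t_1\geq T$, the right-hand side does not depend on either, so the whole supremum is bounded by the single quantity $\epsilon^{-1}\,\sigma_{i+1}^{\max}(t_2)/\sigma_i^{\min}(t_2)$. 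By the MET (\Cref{propositionDeterministicMET}) the singular values satisfy $\lambda(\sigma_{j_k}(t_2))=\lambda_j$, so by \Cref{propositionLyapunovIndexProperties},
\begin{equation*}
\lambda\!\left(\frac{1}{\epsilon}\,\frac{\sigma_{i+1}^{\max}(t_2)}{\sigma_i^{\min}(t_2)}\right)
= \lambda_{i+1}-\lambda_i = -|\lambda_i-\lambda_{i+1}|,
\end{equation*}
which gives the claim.

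The only subtle point is uniformity in $t_1$: because the Gram--Schmidt/orthonormalization procedure inside Ginelli does not change filtration spaces and because $\epsilon$ from \Cref{lemmaAdmissibleAfterStep1.1} is chosen once and for all (independent of $t_1,t_2\geq T$), the $1/\epsilon$ factor in the admissibility estimate is a harmless constant with Lyapunov index zero and does not interact with $\sup_{t_1\geq T}$. The main hurdle conceptually is recognizing that the geometric content of step 1.1 has already been packaged into the admissibility assertion of \Cref{lemmaAdmissibleAfterStep1.1}, so that the forward-phase lemma reduces cleanly to a single application of \Cref{propositionAdmissiblePropagation} driven by the asymptotics of singular values of $\Phi(t_2,\omega_0)$.
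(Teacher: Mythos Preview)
Your proof is correct and follows essentially the same approach as the paper: factor the cocycle through $\omega_0$, use \Cref{lemmaAdmissibleAfterStep1.1} to obtain a uniform $\epsilon$-admissibility of the intermediate tuple with respect to $(v(t_2))$, and then apply \Cref{propositionAdmissiblePropagation} with $A=\Phi(t_2,\omega_0)$ to bound the distance by $\epsilon^{-1}\sigma_{i+1}^{\max}(t_2)/\sigma_i^{\min}(t_2)$, whose Lyapunov index is $-|\lambda_i-\lambda_{i+1}|$. The only minor remark is that \Cref{propositionLyapunovIndexProperties} alone gives $\leq$ rather than the equality you wrote for the Lyapunov index of the singular-value ratio; equality does hold here because the singular values converge as genuine limits in the MET, but in any case only the inequality is needed.
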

	\begin{proof}
		Write 
		\begin{equation*}
		\Phi(t_1+t_2,\theta_{-t_1}\omega_0)=\Phi(t_2,\omega_0)\Phi(t_1,\theta_{-t_1}\omega_0).
		\end{equation*}
		By \cref{lemmaAdmissibleAfterStep1.1} we find $T>0$ and $0<\epsilon<1$ such that for all $t_1,t_2\geq T$ and $(b)\in\mathcal{A}d^{(\hat{v}^-(t_1))}(\delta(t_1))$ the tuple $\left(\Phi(t_1,\theta_{-t_1}\omega_0)b\right)$ is $\epsilon$-admissible w.r.t. $v(t_2)$. Now, apply \cref{propositionAdmissiblePropagation} with $A=\Phi(t_2,\omega_0)$ to see that
		\begin{equation*}
		d\left(\overline{U}_i^{\left(\Phi(t_1+t_2,\theta_{-t_1}\omega_0)b\right)},\overline{U}_i^{(u(t_2))}\right)\leq \frac{1}{\epsilon}\,\frac{\sigma_{i+1}^{\max}(t_2)}{\sigma_i^{\min}(t_2)}.
		\end{equation*}
		Since the estimate is independent of $t_1\geq T$ and singular values converge to LEs, the claim is proved.
	\end{proof}

	\subsection{Backward Phase}\label{subsectionBackwardPhase}
	Initial tuples for the backward phase are obtained from spaces spanned by vectors of the forward phase. Thus, it appears more practical to describe admissibility in terms of propagated forward vectors instead of $\left(\hat{v}(t_2)\right)$.
	\begin{lemma}\label[lemma]{lemmaAdmissibleAfterForwardPhase}
		Let $0<\delta(t)<\frac{1}{\sqrt{2}}$ with $\lambda\left(\frac{1}{\delta}\right)=0$ be given. There is $T>0$ such that for all $t_1,t_2\geq T$ and all $(b)\in\mathcal{A}d^{(\hat{v}^-(t_1))}(\delta(t_1))$ we have
		\begin{equation*}
		\mathcal{A}d_-^{(f)^r}\left(\sqrt{2}\delta(t_2)\right)\subset\mathcal{A}d_-^{\left(\hat{v}(t_2)\right)}(\delta(t_2)),
		\end{equation*}
		where $(f):=\mathcal{GS}\left(\Phi(t_1+t_2,\theta_{-t_1}\omega_0)b\right)$ and admissibility holds with respect to reversed degeneracies.
	\end{lemma}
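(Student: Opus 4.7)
The plan is to run the triangle inequality for the metric $d$ with $(f)^r$ inserted as an intermediate basis between $(a)$ and $(\hat{v}(t_2))$. Concretely, for $(a)\in\mathcal{A}d_-^{(f)^r}\bigl(\sqrt{2}\delta(t_2)\bigr)$ and each $i<p$, the assumption on $(a)$ together with the triangle inequality yields
\[
d\bigl(\overline{U}_{i^-}^{(a)},\overline{U}_{i^-}^{(\hat{v}(t_2))}\bigr)\leq \sqrt{1-2\delta(t_2)^2}+d\bigl(\overline{U}_{i^-}^{(f)^r},\overline{U}_{i^-}^{(\hat{v}(t_2))}\bigr),
\]
so the task reduces to controlling the second summand uniformly in $t_1$ and exponentially small in $t_2$.

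For that summand I would invoke the identity $\overline{U}_{i^-}^{(c)^r}=\bigl(\overline{U}_{p-i}^{(c)}\bigr)^{\perp}$ valid for any ONB $(c)$ (already used in the proof of \Cref{lemmaMETProofInversed}), combined with $(\hat{v}(t_2))=(u(t_2))^r$ and property 2 of \Cref{propositionDistance}, to rewrite
\[
d\bigl(\overline{U}_{i^-}^{(f)^r},\overline{U}_{i^-}^{(\hat{v}(t_2))}\bigr)=d\bigl(\overline{U}_{p-i}^{(f)},\overline{U}_{p-i}^{(u(t_2))}\bigr).
\]
The right-hand side is precisely the quantity estimated in \Cref{lemmaProofStep1.2}: there exists $T_0>0$ such that for all $t_1\geq T_0$ and all $(b)\in\mathcal{A}d^{(\hat{v}^-(t_1))}(\delta(t_1))$, this distance decays exponentially in $t_2$ at a rate of at least $\min_{j<p}|\lambda_j-\lambda_{j+1}|>0$, uniformly in the choice of $(b)$ and of $t_1\geq T_0$.

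To close the argument, write $\eta(t_1,t_2)$ for the uniform bound supplied by the previous step. The desired inclusion reduces to $\sqrt{1-2\delta(t_2)^2}+\eta(t_1,t_2)\leq\sqrt{1-\delta(t_2)^2}$, which, after rationalization and using $\delta(t_2)<1/\sqrt{2}$, is implied by the cruder condition $\eta(t_1,t_2)\leq \delta(t_2)^2/2$. Since $\lambda(1/\delta)=0$ forces $\delta(t_2)^2$ to decay only subexponentially in $t_2$, while $\eta(t_1,t_2)$ decays exponentially in $t_2$ uniformly in $t_1\geq T_0$, one can enlarge $T_0$ to some $T\geq T_0$ for which $\eta(t_1,t_2)\leq \delta(t_2)^2/2$ holds for all $t_1,t_2\geq T$.

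The main obstacle I anticipate is bookkeeping rather than conceptual: one must ensure that the uniformity in $t_1$ provided by \Cref{lemmaProofStep1.2} genuinely survives the triangle inequality and the final algebraic comparison, so that a single $T$ can be selected that works for every admissible $(b)$ and for both time parameters at once. Once that is pinned down, the remaining steps are purely algebraic.
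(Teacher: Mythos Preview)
Your proposal is correct and follows essentially the same route as the paper's proof: triangle inequality through $(f)^r$, the ONB identity to rewrite the second summand as $d\bigl(\overline{U}_{p-i}^{(f)},\overline{U}_{p-i}^{(u(t_2))}\bigr)$, and then \Cref{lemmaProofStep1.2} for the uniform exponential decay. The only cosmetic difference is in the final algebraic step---the paper squares the inequality and bounds the resulting quotient via its Lyapunov index, whereas you rationalize and compare to $\delta(t_2)^2/2$---but both amount to the same comparison of an exponentially decaying term against the subexponentially decaying $\delta(t_2)^2$.
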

	\begin{proof}
		Let $(f):=\mathcal{GS}\left(\Phi(t_1+t_2,\theta_{-t_1}\omega_0)b\right)$ for $(b)\in\mathcal{A}d^{(\hat{v}^-(t_1))}(\delta(t_1))$ be given, and let $(g)\in\mathcal{A}d_-^{\left(f\right)^r}\left(\sqrt{2}\delta(t_2)\right)$ be an admissible tuple. We estimate
		\begin{align*}
		d\left(\overline{U}_{i^-}^{(g)},\overline{U}_{i^-}^{\left(\hat{v}(t_2)\right)}\right)&\leq d\left(\overline{U}_{i^-}^{(g)},\overline{U}_{i^-}^{\left(f\right)^r}\right)+d\left(\overline{U}_{i^-}^{(f)^r},\overline{U}_{i^-}^{\left(\hat{v}(t_2)\right)}\right)\\
		&\leq\sqrt{1-2\delta(t_2)^2}+d\left(\left(\overline{U}_{p-i}^{(f)}\right)^{\perp},\left(\overline{U}_{p-i}^{\left(u(t_2)\right)}\right)^{\perp}\right)\\
		&=\sqrt{1-2\delta(t_2)^2}+d\left(\overline{U}_{p-i}^{\left(\Phi(t_1+t_2,\theta_{-t_1}\omega_0)b\right)},\overline{U}_{p-i}^{\left(u(t_2)\right)}\right).
		\end{align*}
		The last summand is bounded by 
		\begin{equation*}
			d(t_2):=\sup_{t_1\geq T}\, \sup_{(b)\in\mathcal{A}d^{(\hat{v}^-(t_1))}(\delta(t_1))}d\left(\overline{U}_{p-i}^{\left(\Phi(t_1+t_2,\theta_{-t_1}\omega_0)b\right)},\overline{U}_{p-i}^{(u(t_2))}\right)
		\end{equation*}
		for $t_2\geq T$ with $T$ as in \cref{lemmaProofStep1.2}. In particular, it holds $\lambda(d(t_2))<0$. Now, for $(g)$ to be $\delta(t_2)$-admissible w.r.t. $\left(\hat{v}(t_2)\right)$, it suffices to show that
		\begin{equation*}
		\sqrt{1-2\delta(t_2)^2}+d(t_2)\leq\sqrt{1-\delta(t_2)^2}
		\end{equation*}
		for $t_2$ large enough, which in turn is equivalent to 
		\begin{equation*}
		1-2\delta(t_2)^2+2\sqrt{1-2\delta(t_2)^2}d(t_2)+d(t_2)^2\leq 1-\delta(t_2)^2
		\end{equation*}
		and to
		\begin{equation*}
		\frac{d(t_2)\left(2\sqrt{1-2\delta(t_2)^2}+d(t_2)\right)}{\delta(t_2)^2}\leq 1.
		\end{equation*}
		The latter is true for $t_2$ large enough, since we have
		\begin{align*}
		&\lambda\left(\frac{d(t_2)\left(2\sqrt{1-2\delta(t_2)^2}+d(t_2)\right)}{\delta(t_2)^2}\right)\leq\lambda\left(\frac{d(t_2)\left(2+d(t_2)\right)}{\delta(t_2)^2}\right)<0.
		\end{align*}
	\end{proof}
	Next, we combine our characterization of the forward phase with backward propagation. During the backward phase, it is enough to restrict ourselves to tuples that have admissible extensions. A few arguments from the forward phase can be repeated by reversing the cocycle.
	\begin{lemma}\label[lemma]{lemmaProofStep2}
		Let $0<\delta(t)<\frac{1}{\sqrt{2}}$ with $\lambda\left(\frac{1}{\delta}\right)=0$ be given. It holds
		\begin{align*}
		&\overline{\lambda}\left(\sup_{(b)\in\mathcal{A}d^{(\hat{v}^-(t_1))}(\delta(t_1))}\,\sup_{(b')\in\left(\mathcal{A}d^{(f)^r}_{\textnormal{ext}_-}\left(\sqrt{2}\delta(t_2)\right)\right)^r}d\left(U_i^{\left(\Phi(-t_2,\theta_{t_2}\omega_0)b'\right)},E_i(\omega_0)\right)\right)\\
		&\hspace{1em}\leq -\min\left(|\lambda_{i}-\lambda_{i-1}|,|\lambda_{i}-\lambda_{i+1}|\right),
		\end{align*}
		where $(f):=\mathcal{GS}\left(\Phi(t_1+t_2,\theta_{-t_1}\omega_0)b\right)$.
	\end{lemma}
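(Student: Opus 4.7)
The goal is to bound, on an extended exponential scale, the distance between the algorithm's output $U_i^{(\Phi(-t_2,\theta_{t_2}\omega_0)b')}$ and the Oseledets space $E_i(\omega_0)$. Recalling from \cref{propositionOseledetsSpaces} that $E_i(\omega_0)=V_{p+1-i}^-(\omega_0)\cap V_i(\omega_0)$, my plan is to express the output as an intersection of two filtration spans whose limits are precisely $V_{p+1-i}^-(\omega_0)$ and $V_i(\omega_0)$, and then invoke \cref{corollaryConvergenceRateIntersections}.

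The algebraic identity I would exploit is
\begin{equation*}
U_i^{(b')}=\overline{U}_i^{(b')}\cap \overline{U}_{(p+1-i)^-}^{(b')^r},
\end{equation*}
which states that the span of the $i$-th group equals the span of the first $i$ groups intersected with the span of the last $p+1-i$ groups; this identity is preserved by the linear map $\Phi(-t_2,\theta_{t_2}\omega_0)$. For the first factor, the $d_1+\dots+d_i$ vectors $b'_{1_1},\dots,b'_{i_{d_i}}$ lie, by construction of the backward phase, inside the $(d_1+\dots+d_i)$-dimensional space $\overline{U}_i^{(\Phi(t_1+t_2,\theta_{-t_1}\omega_0)b)}$; linear independence (inherited from extension admissibility) therefore forces $\overline{U}_i^{(b')}=\overline{U}_i^{(\Phi(t_1+t_2,\theta_{-t_1}\omega_0)b)}$, so that $\overline{U}_i^{(\Phi(-t_2,\theta_{t_2}\omega_0)b')}=\overline{U}_i^{(\Phi(t_1,\theta_{-t_1}\omega_0)b)}$. \Cref{lemmaProofStep1.1} applied at time $t_1$ then yields exponential convergence to $V_{p+1-i}^-(\omega_0)$ at rate at least $|\lambda_i-\lambda_{i+1}|$, uniformly over $(b)\in\mathcal{A}d^{(\hat{v}^-(t_1))}(\delta(t_1))$.

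For the second factor I would apply the time-reversed analog of \cref{lemmaProofStep1.1} to the reversed cocycle $\Phi^-$, using $\Phi(-t_2,\theta_{t_2}\omega_0)=\Phi^-(t_2,\theta^-_{-t_2}\omega_0)$. The crucial observation is that the right singular vectors of the backward propagator are $\hat{v}(t_2)$, and \cref{lemmaAdmissibleAfterForwardPhase} converts $(f)^r$-admissibility (in reversed degeneracies) into $\hat{v}(t_2)$-admissibility for all sufficiently large $t_1,t_2$. The extension admissibility hypothesis $(b')^r\in \mathcal{A}d^{(f)^r}_{\textnormal{ext}_-}(\sqrt{2}\delta(t_2))$ supplies, at the index $(p+1-i)^-$, a fully admissible extension whose relevant filtration span matches, so that the reversed-time version of \cref{lemmaProofStep1.1} produces convergence of $\overline{U}_{(p+1-i)^-}^{(\Phi(-t_2,\theta_{t_2}\omega_0)b')^r}$ to $V_{p+1-(p+1-i)}(\omega_0)=V_i(\omega_0)$ at rate $|\lambda_{i-1}-\lambda_i|$ (using $\lambda_j^-=-\lambda_{p+1-j}$), uniformly in $t_1\geq T$.

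Finally, the limits $V_{p+1-i}^-(\omega_0)$ and $V_i(\omega_0)$ are transversal by the Oseledets splitting assumption $V_{i+1}(\omega_0)\cap V^-_{p+1-i}(\omega_0)=\{0\}$ stated just before \cref{propositionOseledetsSpaces}, so \cref{corollaryConvergenceRateIntersections} gives
\begin{equation*}
\overline{\lambda}\!\left(\cdots\right)\leq\max\!\left(-|\lambda_i-\lambda_{i+1}|,\,-|\lambda_{i-1}-\lambda_i|\right)=-\min\!\left(|\lambda_i-\lambda_{i+1}|,\,|\lambda_i-\lambda_{i-1}|\right),
\end{equation*}
as claimed; the double supremum over admissibility classes is absorbed because all bounds above are uniform within these classes. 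The main obstacle is the delicate bookkeeping with reversed-degeneracy indexing, and in particular reconciling the extension admissibility—which only guarantees admissibility of individual groups—with the need to control the full $(p+1-i)^-$-filtration of $(b')^r$; the equality of spans established in the first factor is what makes this reconciliation possible without additional assumptions on $(b')$.
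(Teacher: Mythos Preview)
Your overall strategy coincides with the paper's: show that $U_i^{(\Phi(-t_2,\theta_{t_2}\omega_0)b')}$ equals the intersection of a forward-filtration span converging to $V_{p+1-i}^-(\omega_0)$ with a backward-filtration span converging to $V_i(\omega_0)$, and then invoke \cref{corollaryConvergenceRateIntersections}. The ingredients (\cref{lemmaProofStep1.1} for $\Phi$ and $\Phi^-$, \cref{lemmaAdmissibleAfterForwardPhase}, transversality from \cref{propositionOseledetsSpaces}) are exactly the ones the paper uses.

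There is, however, a genuine gap in how you set up the intersection. Your starting identity $U_i^{(b')}=\overline{U}_i^{(b')}\cap\overline{U}_{(p+1-i)^-}^{(b')^r}$ and the equality $\overline{U}_i^{(b')}=\overline{U}_i^{(f)}$ both require that the first $d_1+\dots+d_i$ vectors of $(b')$ be linearly independent; you assert this is ``inherited from extension admissibility'', but extension admissibility is a \emph{group-by-group} condition and does not obviously force different groups to be independent of one another (it is in fact true, via the observation that admissibility of an extension at index $(p+1-k)^-$ forces $U_k^{(b')}\cap\overline{U}_{k-1}^{(f)}=\{0\}$, but this needs an argument). More seriously, for the second factor you claim that the admissible extension $(g)$ at index $(p+1-i)^-$ has ``relevant filtration span matching'' $\overline{U}_{(p+1-i)^-}^{(b')^r}$; this is false in general, since the extension only agrees with $(b')^r$ at the single group $(p+1-i)^-$, so $\overline{U}_{(p+1-i)^-}^{(g)}$ need not equal $\overline{U}_{(p+1-i)^-}^{(b')^r}$.

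The paper sidesteps both issues by never asserting equality of these filtrations. Instead it observes directly that $U_i^{(b')}\subset\overline{U}_i^{(f)}$ (from the domain constraint on $b'_{i_k}$ alone) and $U_i^{(b')}=U_{(p+1-i)^-}^{(g)}\subset\overline{U}_{(p+1-i)^-}^{(g)}$, hence after applying $\Phi(-t_2,\theta_{t_2}\omega_0)$ one gets only the \emph{inclusion}
\[
U_i^{(\Phi(-t_2,\theta_{t_2}\omega_0)b')}\subset\overline{U}_i^{(\Phi(t_1,\theta_{-t_1}\omega_0)b)}\cap\overline{U}_{(p+1-i)^-}^{(\Phi(-t_2,\theta_{t_2}\omega_0)g)}.
\]
Equality then follows by a dimension count: the right-hand side converges to $E_i(\omega_0)$, hence has dimension $d_i$ for large $t_1,t_2$, while the left-hand side already has dimension $d_i$ since the $i$-th group of $(b')$ sits inside the basis $(g)$. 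Replacing your two unjustified equalities by this containment-plus-dimension argument closes the gap.
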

	\begin{proof}
		Applying \cref{lemmaProofStep1.1} to $\Phi$ and $\Phi^-$, we get
		\begin{equation*}
		\lambda\left(\sup_{(b)\in\mathcal{A}d^{(\hat{v}^-(t))}(\delta(t))}d\left(\overline{U}_i^{\left(\Phi(t,\theta_{-t}\omega_0)b\right)}, V_{p+1-i}^-(\omega_0)\right)\right)\leq -|\lambda_{i}-\lambda_{i+1}|
		\end{equation*}
		and
		\begin{equation*}
		\lambda\left(\sup_{(g)\in\mathcal{A}d_-^{(\hat{v}(t))}(\delta(t))}d\left(\overline{U}_{i^-}^{\left(\Phi(-t,\theta_{t}\omega_0)g\right)}, V_{p+1-i}(\omega_0)\right)\right)\leq -|\lambda^-_{i}-\lambda^-_{i+1}|.
		\end{equation*}
		By switching indices we can rewrite the latter as
		\begin{equation*}
		\lambda\left(\sup_{(g)\in\mathcal{A}d_-^{(\hat{v}(t))}(\delta(t))}d\left(\overline{U}_{(p+1-i)^-}^{\left(\Phi(-t,\theta_{t}\omega_0)g\right)}, V_{i}(\omega_0)\right)\right)\leq -|\lambda_{i}-\lambda_{i-1}|.
		\end{equation*}
		In short, we have exponentially fast converging approximations to $V_{p+1-i}^-(\omega_0)$ and $V_i(\omega_0)$, which are transversal subspaces with intersection $E_i(\omega_0)$ (see equation \cref{equationOseledetsSpacesIntersection}). Thus, we can apply \cref{corollaryConvergenceRateIntersections} to
		\begin{equation*}
		\mathcal{M}_t:=\left\{\left.\overline{U}_i^{\left(\Phi(t,\theta_{-t}\omega_0)b\right)}\ \right|\ (b)\in\mathcal{A}d^{(\hat{v}^-(t))}(\delta(t))\right\}
		\end{equation*}
		and
		\begin{equation*}
		\mathcal{N}_t:=\left\{\left.\overline{U}_{(p+1-i)^-}^{\left(\Phi(-t,\theta_{t}\omega_0)g\right)}\ \right|\ (g)\in\mathcal{A}d_-^{(\hat{v}(t))}(\delta(t))\right\}
		\end{equation*}
		to get a convergence rate estimate for intersections\footnote{Following this statement, one can prove convergence of algorithms that initiate randomly chosen vectors in the past and future, propagate them to the present, and then take intersections of involved subspaces to get an approximation of $E_i(\omega_0)$. Similar convergence theorems for continuous and discrete time can be derived.}:
		\begin{align*}
		&\overline{\lambda}\Bigg(\sup_{(b)\in\mathcal{A}d^{(\hat{v}^-(t_1))}(\delta(t_1))}\,\sup_{(g)\in\mathcal{A}d_-^{(\hat{v}(t_2))}\left(\delta(t_2)\right)}\\
		&\hspace{3em}d\left(\overline{U}_i^{\left(\Phi(t_1,\theta_{-t_1}\omega_0)b\right)}\cap\overline{U}_{(p+1-i)^-}^{\left(\Phi(-t_2,\theta_{t_2}\omega_0)g\right)},E_i(\omega_0)\right)\Bigg)\\
		&\hspace{1em}\leq -\min\left(|\lambda_{i}-\lambda_{i-1}|,|\lambda_{i}-\lambda_{i+1}|\right).
		\end{align*}
		By \cref{lemmaAdmissibleAfterForwardPhase} we can take the supremum over
		\begin{equation*}
		(g)\in\mathcal{A}d_-^{\left(f\right)^r}\left(\sqrt{2}\delta(t_2)\right)
		\end{equation*}
		instead, while maintaining the estimate. In particular, this is true for all admissible extensions $(g)$ of
		\begin{equation*}
		(b')^r\in\mathcal{A}d_{\textnormal{ext}_-}^{(f)^r}\left(\sqrt{2}\delta(t_2)\right).
		\end{equation*}
		Now, to prove the lemma it suffices to show that each admissible extension $(g)$ of 
		\begin{equation*}
		\left((b')^r_{(p+1-i)^-_1},\dots,(b')^r_{(p+1-i)^-_{d^-_{p+1-i}}}\right)=\left(b'_{i_{d_i}},\dots,b'_{i_{1}}\right)
		\end{equation*}
		satisfies
		\begin{equation*}
		U_i^{\left(\Phi(-t_2,\theta_{t_2}\omega_0)b'\right)}=\overline{U}_i^{\left(\Phi(t_1,\theta_{-t_1}\omega_0)b\right)}\cap\overline{U}_{(p+1-i)^-}^{\left(\Phi(-t_2,\theta_{t_2}\omega_0)g\right)}.
		\end{equation*}
		We clearly have
		\begin{equation*}
		U_i^{(b')}=U_{(p+1-i)^-}^{(b')^r}=U_{(p+1-i)^-}^{(g)}\subset\overline{U}_{(p+1-i)^-}^{(g)}
		\end{equation*}
		and hence
		\begin{equation*}
		U_i^{\left(\Phi(-t_2,\theta_{t_2}\omega_0)b'\right)}\subset\overline{U}_{(p+1-i)^-}^{\left(\Phi(-t_2,\theta_{t_2}\omega_0)g\right)}
		\end{equation*}
		for an admissible extension $(g)$. Moreover, the definition of extendable admissibility requires that
		\begin{align*}
		(b')^r_{(p+1-i)^-_1},\dots,(b')^r_{(p+1-i)^-_{d^-_{p+1-i}}}&\in U_{(p+1-i)^-}^{(f)^r}\oplus\dots\oplus U_{p^-}^{(f)^r}\\
		&= U_{i}^{(f)}\oplus\dots\oplus U_{1}^{(f)}\\
		&= \overline{U}_{i}^{\left(\Phi(t_1+t_2,\theta_{-t_1}\omega_0)b\right)}\\
		&=\Phi(t_2,\omega_0)\overline{U}_{i}^{\left(\Phi(t_1,\theta_{-t_1}\omega_0)b\right)},
		\end{align*}
		or equivalently, it holds
		\begin{equation*}
		U_i^{\left(\Phi(-t_2,\theta_{t_2}\omega_0)b'\right)}\subset\overline{U}_{i}^{\left(\Phi(t_1,\theta_{-t_1}\omega_0)b\right)}.
		\end{equation*}
		Thus, we have
		\begin{equation*}
		U_i^{\left(\Phi(-t_2,\theta_{t_2}\omega_0)b'\right)}\subset\overline{U}_i^{\left(\Phi(t_1,\theta_{-t_1}\omega_0)b\right)}\cap\overline{U}_{(p+1-i)^-}^{\left(\Phi(-t_2,\theta_{t_2}\omega_0)g\right)}.
		\end{equation*}
		Since admissible tuples are linearly independent, the left-hand side has dimension $d_i$. The right-hand side must have the same dimension for $t_1,t_2$ large enough, because the intersection converges to $E_i(\omega_0)$. Hence, we have equality of subspaces, which concludes the proof.
	\end{proof}

	\subsection{Proof of Theorems}\label{subsectionProofTheorems}
	\Cref{lemmaProofStep2} describes how admissible tuples fare in Ginelli's algorithm. The remaining work lies in connecting the lemma to measurement results from \cref{subsectionAdmissibility}.
	\begin{proof}[Proof of \textnormal{\cref{theoremGinelliConvergenceInMeasure}}]
		Fix $\epsilon>0$. By compactness of $\mathcal{K}$ we find $M>0$ with $\mathcal{K}\subset B_d(0,M)^d\times (B(M))^r$. Note that it is enough to prove the claim for the product of balls instead of $\mathcal{K}$. Furthermore, set $\delta(t):=\min\left(\frac{1}{t},\frac{1}{2\sqrt{2}}\right)$, so that $\lambda\left(\frac{1}{\delta}\right)=0$. Now, we use $\delta$ in \cref{lemmaProofStep2} to get
		\begin{equation*}
		\frac{1}{\min(t_1,t_2)}\log d\left(U_i^{\left(\Phi(-t_2,\theta_{t_2}\omega_0)b'\right)},E_i(\omega_0)\right)\leq -\min\left(|\lambda_{i}-\lambda_{i-1}|,|\lambda_{i}-\lambda_{i+1}|\right)+\epsilon
		\end{equation*} 
		for all $(b)\in\mathcal{A}d^{(\hat{v}^-(t_1))}(\delta(t_1))$ and $(b')\in\left(\mathcal{A}d^{(f)^r}_{\textnormal{ext}_-}\left(\sqrt{2}\delta(t_2)\right)\right)^r$ if $t_1$ and $t_2$ are large enough. Using the identification via $A^{(f)}$, we could equivalently assume $(b')=\left(A^{(f)}\alpha\right)$ for
		\begin{equation*}
		(\alpha)\in\left(\left(A^{(f)}\right)^{-1}\right)^d\left(\mathcal{A}d^{(f)^r}_{\textnormal{ext}_-}\left(\sqrt{2}\delta(t_2)\right)\right)^r=\left(\mathcal{A}d^{(e)^r}_{\textnormal{ext}_-}\left(\sqrt{2}\delta(t_2)\right)\right)^r.
		\end{equation*}
		Hence, it is enough to show that nonadmissible tuples have measure zero in the limit:
		\begin{align*}
		&\mu\left(\left(B_d(0,M)^d\times(B(M))^r\right)\setminus\left(\mathcal{A}d^{(\hat{v}^-(t_1))}(\delta(t_1))\times\left(\mathcal{A}d^{(e)^r}_{\textnormal{ext}_-}\left(\sqrt{2}\delta(t_2)\right)\right)^r\right)\right)\\
		&\hspace{1em}\leq\mu\left(B_d(0,M)^d\setminus\mathcal{A}d^{(\hat{v}^-(t_1))}(\delta(t_1))\right)\mu\left((B(M))^r\right)\\
		&\hspace{2em}+\mu\left(B_d(0,M)^d\right)\mu\left((B(M))^r\setminus\left(\mathcal{A}d^{(e)^r}_{\textnormal{ext}_-}\left(\sqrt{2}\delta(t_2)\right)\right)^r\right)\\
		&\hspace{1em}=\mu\left(B_d(0,M)^d\setminus\mathcal{A}d^{(e)}(\delta(t_1))\right)\mu\left(B(M)\right)\\
		&\hspace{2em}+\mu\left(B_d(0,M)^d\right)\mu\left(B(M)\setminus\mathcal{A}d^{(e)^r}_{\textnormal{ext}_-}\left(\sqrt{2}\delta(t_2)\right)\right).
		\end{align*}
		Here, we used invariance under orthogonal transformations of $B_d(0,M)$ to switch from $(\hat{v}^-(t_1))$ to $(e)$. By \cref{corollaryAdmissibleFiniteMeasure} and \cref{corollaryAdmissibleExtFiniteMeasure} the final estimate converges to zero as $\min(t_1,t_2)$ is increased. Hence, we get the desired convergence result.
	\end{proof}
	The discrete time version can be proved in a similar fashion.
	\begin{proof}[Proof of \textnormal{\cref{theoremGinelliConvergenceAlmostEverywhere}}]
		Assume discrete time $\mathbb{T}=\integer$ and $d>1$. We define $\delta_{\epsilon}(n):=\left(\frac{\epsilon}{\sqrt{2}n^2}\right)^{d-1}$ as our admissibility parameter satisfying $\lambda\left(\frac{1}{\delta_{\epsilon}}\right)=0$ for each $0<\epsilon<1$. Using $\delta_{\epsilon}$, we invoke \cref{lemmaProofStep2} to find that 
		\begin{equation*}
		\overline{\lambda}\left(d\left(U_i^{\left(\Phi(-n_2,\theta_{n_2}\omega_0)b'\right)},E_i(\omega_0)\right)\right)\hspace{1em}\leq -\min\left(|\lambda_{i}-\lambda_{i-1}|,|\lambda_{i}-\lambda_{i+1}|\right)
		\end{equation*}
		for $(b')=\left(A^{\mathcal{GS}\left(\Phi(n_1+n_2,\theta_{-n_1}\omega)b\right)}\alpha\right)$, whenever
		\begin{equation*}
		((b),(\alpha))\in\bigcap_{n_1,n_2\in\natural}\mathcal{A}d^{(\hat{v}^-(n_1))}(\delta_{\epsilon}(n_1))\times\left(\mathcal{A}d^{(e)^r}_{\textnormal{ext}_-}\left(\sqrt{2}\delta_{\epsilon}(n_2)\right)\right)^r.
		\end{equation*}
		This is true independent of our choice for $\epsilon$. Hence, it suffices to show that the complement of 
		\begin{equation}\label{equationSetProofConvergenceAE}
		\bigcup_{0<\epsilon<1}\,\bigcap_{n_1,n_2\in\natural}\mathcal{A}d^{(\hat{v}^-(n_1))}(\delta_{\epsilon}(n_1))\times\left(\mathcal{A}d^{(e)^r}_{\textnormal{ext}_-}\left(\sqrt{2}\delta_{\epsilon}(n_2)\right)\right)^r
		\end{equation}
		has measure zero\footnote{Note that the statement is not true in general for continuous time. In fact, in \cref{exampleRotatingOseledetsSpaces} no tuple $(b)$ is admissible w.r.t. $(\hat{v}^-(t_1))$ for all $t_1\in\real_{>0}$ simultaneously. Hence, in this case the set in \cref{equationSetProofConvergenceAE} would be empty.}, which can be proved by exhausting the domain of $((b),(\alpha))$ with products of balls: It holds
		\begin{align*}
		&\mu\Bigg(\left(B_d(0,M)^d\times(B(M))^r\right)\setminus\\
		&\hspace{3em}\bigcup_{0<\epsilon<1}\,\bigcap_{n_1,n_2\in\natural}\mathcal{A}d^{(\hat{v}^-(n_1))}(\delta_{\epsilon}(n_1))\times\left(\mathcal{A}d^{(e)^r}_{\textnormal{ext}_-}\left(\sqrt{2}\delta_{\epsilon}(n_2)\right)\right)^r\Bigg)\\
		&\hspace{1em}\leq\inf_{0<\epsilon<1}\Bigg(\sum_{n_1\in\natural}\mu\left(B_d(0,M)^d\setminus\mathcal{A}d^{(e)}(\delta_{\epsilon}(n_1))\right)\mu\left(B(M)\right)\\
		&\hspace{2em}+\sum_{n_2\in\natural}\mu\left(B_d(0,M)^d\right)\mu\left(B(M)\setminus\mathcal{A}d^{(e)^r}_{\textnormal{ext}_-}\left(\sqrt{2}\delta_{\epsilon}(n_2)\right)\right)\Bigg)\\
		&\hspace{1em}\leq\inf_{0<\epsilon<1}\left(\sum_{n_1\in\natural}\eta_1\left(\delta_{\epsilon}(n_1)\right)^{\frac{1}{d-1}}\mu(B(M))+\sum_{n_2\in\natural}\mu\left(B_d(0,M)^d\right)\eta_2\left(\sqrt{2}\delta_{\epsilon}(n_2)\right)^{\frac{1}{d-1}}\right)\\
		&\hspace{1em}=\inf_{0<\epsilon<1}\epsilon\,\left(\sum_{n\in\natural}\frac{\eta_1\mu(B(M))+\eta_2\sqrt{2}^{\frac{1}{d-1}}\mu\left(B_d(0,M)^d\right)}{\sqrt{2}n^2}\right)\\
		&\hspace{1em}=0
		\end{align*}
		for all $M>0$. Here, it was crucial to use \cref{propositionAdmissibleMeasureEstimate} and \cref{propositionAdmissibleExtMeasureEstimate} to get a more precise measure estimate on nonadmissible tuples.		
	\end{proof}
	\bigskip
	
		\section{Conclusions}\label{sectionConclusions}
	We defined Ginelli's algorithm as a means to compute CLVs/Oseledets spaces, which are the most natural choice for directions describing asymptotic expansion and contraction in the tangent linear model along a given trajectory. The existence of those characteristic directions was provided by the MET of Oseledets. Moreover, the theorem handed us an interface able to link CLVs with a limit of finite time scenarios, in which Ginelli's algorithm is applied to initial vectors. It turned out that certain configurations of initial vectors perform better than others given the same runtime, whereas in some cases the algorithm would not even converge - a problem that did not receive enough attention in previous attempts to prove convergence.\par
	As a measure to tackle this problem, we introduced the concept of admissibility. A configuration of initial vectors is called admissible if it is not too far from the optimal initial vectors, i.e. right singular vectors of the propagator. The term ``not too far'' was made more precise by a parameter $\delta\in[0,1]$. In our formulation, $\delta$-values close to one imply a good correlation, whereas small values of $\delta$ stand for greater distances to the configuration of singular vectors.\par
	In \cite{Ershov1998} it is shown that configurations with $\delta>0$ will align with left singular vectors when propagated from the present state to future states. While the condition of admissibility depends on the chosen runtime, according to the MET the configuration of right singular vectors defining admissibility at the present state converges. Using the limit configuration, it is possible to show that almost all initial configurations will yield a good approximation to left singular vectors if propagated long enough from the present state. However, in Ginelli's algorithm initial vectors are first propagated from past states to the present state. In this case, left singular vectors converge to an orthonormalization of CLVs. But, as the admissibility condition depends on right singular vectors at the past state, the set of admissible initial vectors varies with the runtime and in general does not converge to a limit set. In fact, we presented an example where no fixed initial configuration is admissible for all past states simultaneously. Consequently, the continuous time version of Ginelli's algorithm cannot be expected to converge for fixed initial configurations in general. Instead, we have shown convergence in measure of the continuous time version by carefully analyzing the time-sensitivity of propagated vectors. Moreover, due to suitable measure estimates for sets of admissible vectors, we were able to prove convergence for almost all initial vectors in the discrete time case.\par
	The convergence results for both time cases relate the speed of convergence to LEs. Using the Lyapunov index notation, we were able to prove that Ginelli's algorithm converges exponentially fast with a rate given by the minimum distance between LEs. Interestingly, this was already predicted and observed in applications.\par
	It is important to point out that the Lyapunov index notation neglects system-dependent prefactors for the speed of convergence on subexponential timescales, which may very well be of importance for limited time scenarios. Yet, if enough data is available, subexponential factors, e.g. from choosing two different initial conditions, can be ignored. Moreover, nonadmissible initial configurations will in general turn admissible due to numerical noise. Hence, the concept of admissibility and the different versions of convergence do not play a noticeable role in practice. They can be seen rather as tools or as products of a precise mathematical proof of convergence.\par
	While the proof assumes perfect computations, it is often not known how perturbed data affects LEs and CLVs. In particular, the possibly noncontinuous dependence of the Lyapunov spectrum on the choice of trajectory adds to the uncertainty. In this regard, it would be interesting to know more about how perturbations affect the outcome of Ginelli's algorithm in numerical simulations as well as in analytical computations.\par
	Ultimately, a wide range of applications, some of which are referenced here, underline the importance of CLVs for dynamical systems. Our convergence proof not only verifies the use of Ginelli's algorithm in those applications, but encourages one to apply the concept of CLVs to further scenarios. Moreover, the tools obtained during the proof can be used to investigate other algorithms, such as Wolfe-Samelson's algorithm \cite{Wolfe2007}, as well. In general, we expect our rigorous mathematical treatment to enable a more in-depth analysis that will lead to new insights and improvements of CLV-algorithms, which are important instruments to finding structure in the chaos of dynamical systems.	
	\bigskip
	
	\section*{Acknowledgments}
	This paper is a contribution to the project M1 (Instabilities across scales and statistical mechanics of multi-scale GFD systems) of the Collaborative Research Centre TRR 181 "Energy Transfer in Atmosphere and Ocean" funded by the Deutsche Forschungsgemeinschaft (DFG, German Research Foundation) - Projektnummer 274762653. Special thanks goes to my colleagues from project M1, in particular, R. Lauterbach and I. Gasser for frequent feedback and S. Schubert for motivational discussions on applications of CLVs.
	\bigskip
	

\bibliographystyle{siam}
\bibliography{sources}
	
\end{document}